
\documentclass[a4paper]{amsart}

\usepackage{amsfonts,amsmath, amssymb, latexsym,amsrefs,appendix}
\usepackage{amsthm}
\usepackage{amscd}
\usepackage[all,cmtip]{xy}
\usepackage{float}
\usepackage{enumerate}
\usepackage{tikz}
\usetikzlibrary{arrows}
\newcommand{\midarrow}{\tikz \draw[-triangle 90] (0,0) -- +(.03,0);}

\newcommand{\cat}{\mathbf}

\newcommand{\cS}{{\bf S}}

\newcommand{\beq}{\begin{eqnarray}}
\newcommand{\eeq}{\end{eqnarray}}
\newcommand{\bcen}{\begin{center}}
\newcommand{\ecen}{\end{center}}
\newcommand{\ff}{\infty}
\newcommand{\De}{\Delta}

\newtheorem{thm}{Theorem}[section]
\newtheorem{cor}[thm]{Corollary}
\newtheorem{lem}[thm]{Lemma}
\newtheorem{prop}[thm]{Proposition}
\newtheorem{quest}[thm]{Question}

\theoremstyle{remark}
\newtheorem{rem}[thm]{Remark}

\theoremstyle{definition}
\newtheorem{defn}[thm]{Definition}

\theoremstyle{plain}
\newtheorem{theoremoutside}[thm]{Theorem}

\newtheorem{introtheorem}{Theorem}

\newtheorem{propout}[thm]{Proposition}

\theoremstyle{remark}
\newtheorem*{ack}{Acknowledgements}

\theoremstyle{remark}
\newtheorem*{nota}{Notation and Conventions}

\theoremstyle{remark}

\theoremstyle{plain}

\def\eq#1/{(\ref{e:#1})}
\def\Section#1/{Section~\ref{s:#1}}
\def\Table#1/{Table~\ref{t:#1}}
\def\Figure#1/{Figure~\ref{f:#1}}

\newcommand{\pt}{\{ \ast \}}

\newcommand{\NN}{{\mathbb N}}

\newcommand{\CC}{{\mathbb C}}
\newcommand{\ZZ}{{\mathbb Z}}
\newcommand{\QQ}{{\mathbb Q}}
\newcommand{\FF}{{\mathbb F}}
\newcommand{\LL}{{\mathbb L}}

\newcommand{\co}{\colon\thinspace}

\newcommand{\rank}{\mathrm{rank}}
\newcommand{\ke}{\mathrm{ker}}

\newcommand{\pcp}{{^{\wedge}_p}}
\newcommand{\ratcp}{{^{\wedge}_{\mathbb Q}}}

\newcommand{\Fq}{{{\mathbb F}_{q}}}
\newcommand{\Fp}{{{\mathbb F}_{p}}}
\newcommand{\Fpk}{{{\mathbb F}_{p^k}}}

\newcommand{\qcp}{{^{\wedge}_q}}

\newcommand{\BKfbar}{BK({\overline{{\mathbb F}}}_p)}
\newcommand{\Kfbar}{K({\overline{{\mathbb F}}}_p)}

\newcommand{\Fbar}{{{\overline{\mathbb F}_{p}}}}

\newcommand{\BGfq}{{BG({\mathbb F}_{p^k})^{\wedge}_q}}
\newcommand{\BKfq}{{BK({\mathbb F}_{p^k})^{\wedge}_q}}
\newcommand{\BKf}{{BK({\mathbb F}_{p^k})}}
\newcommand{\BGfqbar}{{BG({\overline{\mathbb F}}_p)^{\wedge}_q}}

\newcommand{\BGIfbar}{{BG_I({\overline{\mathbb F}}_p)}}
\newcommand{\BGJfbar}{{BG_J({\overline{\mathbb F}}_p)}}
\newcommand{\BGIfbarq}{{BG_I({\overline{\mathbb F}}_p)^{\wedge}_q}}

\newcommand{\BGq}{{BG^{\wedge}_q}}

\newcommand{\BKq}{{BK^{\wedge}_q}}

\newcommand{\BKqfix}{{(BK^{\wedge}_q)^{h\psi^k}}}
\newcommand{\BGIq}{{{BG_I}^{\wedge}_q}}
\newcommand{\BGIfq}{{{BG_I({\mathbb F}_{p^k})}^{\wedge}_q}}

\newcommand{\Etwo}{E_2^{\ast, \ast}}

\newcommand{\BGIqfix}{{({{BG_I}^{\wedge}_q})^{h\psi^k_I}}}

\newcommand{\BGI}{{BG_I}}

\newcommand{\BPI}{{BP_I}}

\newcommand{\BTqfix}{{({{BT}^{\wedge}_q})^{h\psi^k}}}
\newcommand{\BGoneqfix}{{({{BG_1}^{\wedge}_q})^{h\psi^k}}}
\newcommand{\BGtwoqfix}{{({{BG_2}^{\wedge}_q})^{h\psi^k}}}

\newcommand{\HBKq}{H^{\ast}(BK, \Fq)}
\newcommand{\HBGevenq}{H^{2\ast}(BK, \Fq)}
\newcommand{\HBGoddq}{H^{2\ast+1}(BK, \Fq)}
\newcommand{\HBTq}{H^{\ast}(BT, \Fq)}
\newcommand{\HBGIq}{H^{\ast}(BG_I, \Fq)}
\newcommand{\HBGoneq}{H^{\ast}(BG_1, \Fq)}
\newcommand{\HBGtwoq}{H^{\ast}(BG_2, \Fq)}

\newcommand{\HBKqfix}{H^{\ast}({{(BK^{\wedge}_q)^{h\psi^k}}}, \Fq)}
\newcommand{\HBGIqfix}{H^{\ast}({{({BG_I}^{\wedge}_q)^{h\psi^k}}}, \Fq)}
\newcommand{\HBGoneqfix}{H^{\ast}({{({{BG_1}^{\wedge}_q})^{h\psi^k}}}, \Fq)}
\newcommand{\HBGtwoqfix}{H^{\ast}({{({{BG_2}^{\wedge}_q})^{h\psi^k}}}, \Fq)}

\newcommand{\HBTqfix}{H^{\ast}({({BT^{\wedge}_q})}^{h\psi^k}, \Fq)}

\newcommand{\HBKqfixhoco}{H^{\ast}(\hcl_{I\in \cS} {(\BGIq)}^{h\psk}, \Fq)}

\newcommand{\Tor}{Tor^{\ast, \ast}}

\newcommand{\cR}{\mathbf{R}}

\newcommand{\hcl}{\mathrm{hocolim}}
\newcommand{\cl}{\mathrm{colim}}
\newcommand{\Objects}{\mathrm{Objects}}
\newcommand{\Hom}{\mathrm{Hom}}
\newcommand{\modd}{\mathrm{mod}}
\newcommand{\Stab}{\mathrm{Stab}}
\newcommand{\Witt}{\mathrm{Witt}}
\newcommand{\hl}{\mathrm{holim}}

\newcommand{\nod}{\noindent}

\newcommand{\sset}{{\subseteq}}

\newcommand{\psk}{\psi^k}

\newcommand{\ari}{\ar@{^{(}->}}
\newcommand{\arif}{\ar@{_{(}->}}
\newcommand{\arline}{\ar@{-}}
\newcommand{\arele}{\ar@{|->}}

\begin{document}

\title[Discrete approximations for Kac-Moody groups]{Discrete approximations for complex  Kac-Moody groups}
\author[J. D. Foley]{John D. Foley}
\address{Department of Mathematics, University of Copenhagen, 2100 K{\o}benhavn {\O}, Denmark}
\thanks{Support by the Danish National Research Foundation (DNRF) through the Centre for Symmetry and Deformation (DNRF92) during the preparation of this paper is gratefully acknowledged.}
\email{foley@math.ku.dk}


\begin{abstract}
We construct a map from the classifying space of
a discrete Kac-Moody group over the algebraic closure of the field with $p$ elements to the classifying space of a complex topological Kac-Moody group
and prove that it is a homology equivalence at primes $q$ different from $p$.
This generalises a classical result of Quillen--Friedlander--Mislin for Lie groups.
  As an application, we construct unstable Adams operations for general Kac-Moody groups compatible with the Frobenius homomorphism.
Our results rely on new integral homology decompositions for certain infinite dimensional unipotent  subgroups of  discrete Kac-Moody groups.
\end{abstract}

\keywords{classifying spaces, Kac-Moody groups, homotopical group theory, root group data systems}
\subjclass[2010]{57T99, 20E42, 51E24, 22E65}

\maketitle


\section{Introduction}

Cohomological approximations for Lie groups by related discrete groups were developed  by Quillen \cites{QKT}, Milnor \cites{Milnor}, Friedlander and Mislin \cite{Fbar}.
In this paper, we prove 
 that a complex Kac-Moody group is cohomologically approximated by the corresponding discrete Kac-Moody group over $\Fbar$ at primes $q$ different from $p$.
 One application is the construction of unstable Adams operations for arbitrary complex Kac-Moody groups.

Over the complex numbers, topological Kac-Moody groups are constructed by integrating Kac-Moody Lie algebras \cite{kacintegrate} that are typically infinite dimensional, but integrate to Lie groups
when finite dimensional.
   Kac-Moody Lie algebras are defined via generators and relations encoded in a  generalized Cartan matrix 
   \cites{kumar}.
 Kac-Moody groups $K$ have a finite rank maximal torus that is unique up to conjugation by a Coxeter Weyl group; this Weyl group is finite exactly when $K$ is Lie.
For any minimal split topological $K$ \cites{kptop, kumar},
 Tits \cite{TitsKM} constructed a corresponding  discrete Kac-Moody group functor $K(-)$ from the category of commutative rings with unit to the category of groups
 such that $K=K(\CC)$ as abstract groups.
  In this paper, a Kac-Moody group is a value of such a functor applied to some fixed ring or the corresponding connected topological group \cites{kptop, kumar}.

\begin{introtheorem}
Let $K$ be a topological complex Kac-Moody group and let $\Kfbar$ be the value of the corresponding discrete Kac-Moody group functor. 
Then, there exists a map $\BKfbar \rightarrow BK$ that is an isomorphism
on homology with $\Fq$ coefficients for any $q\neq p$. 
\label{KMbar}
\end{introtheorem}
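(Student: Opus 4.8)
The plan is to compare $\BKfbar$ and $BK$ through homotopy colimit decompositions indexed by one and the same poset $\cS$ — the poset of subsets $I$ of the simple roots for which the corresponding parabolic subgroup $W_I \le W$ of the Weyl group is finite — whose $I$-th term is (homology equivalent to) the classifying space of a finite-type ``Levi'' parabolic $K_I$, and then to reduce the comparison term by term to the classical discrete approximation theorem of Quillen, Friedlander and Mislin \cites{QKT,Milnor,Fbar} applied to these $K_I$.

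First I would record the two decompositions. On the topological side, the complex Kac-Moody group $K$ is the amalgam of its parabolic subgroups $P_I$, $I\in\cS$, each of which deformation retracts onto its reductive Levi factor $K_I$; this yields a homotopy decomposition $BK \simeq \hcl_{I\in\cS} BK_I$ (equivalently, a Bousfield--Kan spectral sequence computing $H_*(BK;\Fq)$ from the $H_*(BK_I;\Fq)$), available from the existing theory of classifying spaces of Kac-Moody groups. On the discrete side, $\Kfbar$ carries a Tits system and therefore acts on its building; the Davis realization of that building is $\mathrm{CAT}(0)$, hence contractible, the stabilizer of a cell of type $I$ is the spherical parabolic $P_I(\Fbar)$, and the quotient poset of cells is exactly $\cS$. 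The Borel construction then gives a homotopy decomposition $\BKfbar \simeq \hcl_{I\in\cS} BP_I(\Fbar)$.

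Next I would carry out the term-by-term comparison. Each spherical parabolic splits as $P_I(\Fbar)\cong U_I(\Fbar)\rtimes K_I(\Fbar)$, where $K_I(\Fbar)$ is the $\Fbar$-points of a split reductive group scheme and $U_I(\Fbar)$ is an infinite-dimensional unipotent group assembled from the positive root subgroups not contained in the Levi. Using the integral homology decomposition for such unipotent subgroups (the new input announced in the abstract), $U_I(\Fbar)$ is exhibited — up to integral homology — as an increasing union of finite $p$-groups, so that $BU_I(\Fbar)$ has the homology of a point over $\ZZ[1/p]$; consequently $BP_I(\Fbar)\to BK_I(\Fbar)$ is an $\Fq$-homology isomorphism for every $q\neq p$, and naturally in $I\in\cS$. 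For each $I$, the Levi $K_I$ is connected reductive, so Quillen--Friedlander--Mislin provides a map $BK_I(\Fbar)\to BK_I$ (natural in $I$, after $q$-completion if necessary, which does not affect $\Fq$-homology) that is an $\Fq$-homology isomorphism for $q\neq p$. Since an objectwise $\Fq$-homology equivalence of $\cS$-diagrams induces an $\Fq$-homology equivalence on homotopy colimits via the Bousfield--Kan spectral sequence, the composite
\[
\BKfbar \;\xleftarrow{\ \simeq\ }\; \hcl_{I\in\cS} BP_I(\Fbar) \;\longrightarrow\; \hcl_{I\in\cS} BK_I(\Fbar) \;\longrightarrow\; \hcl_{I\in\cS} BK_I \;\xrightarrow{\ \simeq\ }\; BK
\]
(in which the outer maps are the genuine homotopy equivalences of the preceding paragraph) produces the desired map $\BKfbar \to BK$ and shows it is an isomorphism on $\Fq$-homology for all $q\neq p$.

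The main obstacle is the unipotent step: the groups $U_I(\Fbar)$ are genuinely infinite-dimensional, and the interaction (ordering and commutation relations) between their real and imaginary root subgroups is delicate, so establishing the integral homology decomposition that makes $BU_I(\Fbar)$ a $\ZZ[1/p]$-homology point is where the real work lies. A secondary, more bookkeeping-type point is to ensure that the topological and discrete decompositions are set up over literally the same poset $\cS$ and that the term-by-term equivalences are natural enough to be glued into a single map of spaces.
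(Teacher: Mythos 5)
Your proposal is correct and follows essentially the same route as the paper: the same homotopy decompositions over $\cS$ on both the discrete and topological sides, the Levi splitting $P_I(\Fbar)\cong U_I(\Fbar)\rtimes G_I(\Fbar)$ together with the acyclicity of $BU_I(\Fbar)$ away from $p$, and the Friedlander--Mislin comparison applied levelwise and glued over $\cS$ (the paper handles your ``bookkeeping point'' via the Witt-vector zig-zag and an arithmetic fibre square). The one imprecision is in the unipotent step: over $\Fbar$ the pieces $U_w(\Fbar)\cong(\Fbar,+)^{l(w)}$ are not finite $p$-groups and the indexing poset is not directed, so the acyclicity rests on the contractibility of the Davis complex controlling the homotopy colimit rather than on an increasing union --- but you correctly flag this as where the real work lies.
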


Theorem \ref{KMbar} is  proved  by extending Friedlander and Mislin's map \cite[Theorem 1.4]{Fbar}
between the classifying spaces of appropriate reductive subgroups of discrete and topological Kac-Moody groups to a map between the full classifying spaces of
Kac-Moody groups. This extension uses a new homology  decomposition of a discrete Kac-Moody group over a field away from the ambient characteristic.
We state this result now, but see Theorem \ref{hocoBGqfinite} for a more precise statement.

\begin{introtheorem}
Let $K(\FF)$  be a Kac-Moody group over a field.
 Then there is a finite collection of subgroups $\{ G_I(\FF)\}_{I\in \cS}$ that are the $\FF$--points of reductive algebraic groups
such that the inclusions $ G_I(\FF) \hookrightarrow K(\FF)$ induce a homology equivalence
\beq
\hcl_{I\in \cS}\BGI(\FF) {\longrightarrow} BK(\FF),
\label{introhocoBGq}
\eeq
\nod away from the characteristic of $\FF$.
\label{introhocoBGqfinite}
\end{introtheorem}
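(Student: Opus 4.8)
The plan is to build the decomposition from the Bruhat–Tits building / BN-pair structure of $K(\FF)$, using the fact that a Kac-Moody group over a field has a saturated BN-pair indexed by its Weyl group $W$, which is a Coxeter group on the finite generating set $S$ of simple reflections. The reductive subgroups $G_I(\FF)$ will be the standard parabolic (in fact Levi) subgroups $L_J(\FF)$ associated to those subsets $J\subseteq S$ that are \emph{spherical}, i.e. generate a finite subgroup $W_J\le W$; since $S$ is finite there are only finitely many such $J$, and the classical structure theory says each such $L_J(\FF)$ is the group of $\FF$-points of a reductive algebraic group (it has a finite Weyl group $W_J$, a split BN-pair, and a root datum given by the sub-root-system). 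So I would let $\cS$ be the poset of spherical subsets of $S$ and set $G_J = L_J$; the inclusions $L_J(\FF)\hookrightarrow K(\FF)$ are the maps in \eqref{introhocoBGq}.

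The main step is then a homotopy-colimit decomposition of $BK(\FF)$ over this poset. Here I would invoke the standard subgroup-decomposition machinery: if a discrete group $G$ acts on a contractible space $X$ (or, simplicially, if $BG$ is modeled by the Borel construction $EG\times_G X$ for a $G$-CW complex $X$ with contractible fixed sets on the relevant isotropy), then $BG\simeq \hcl_{X/G} B(\mathrm{Stab})$. For Kac-Moody groups the natural candidate for $X$ is the geometric realization of the (topological) Tits building, or equivalently the poset of all \emph{spherical} cosets $gW_J$, $J$ spherical; this is the object on which $K(\FF)$ acts with the $L_J(\FF)$ (up to conjugacy) as stabilizers, and over an infinite field — or more carefully, after the appropriate arguments — this complex is $\FF_q$-acyclic for $q$ away from $\mathrm{char}\,\FF$. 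Concretely I would show the associated Borel-construction spectral sequence / the Bousfield–Kan homotopy-colimit spectral sequence collapses to give $H_*(\hcl_{J\in\cS} BL_J(\FF);\FF_q)\xrightarrow{\ \cong\ } H_*(BK(\FF);\FF_q)$. The input ``the building is mod-$q$ acyclic'' is where the characteristic hypothesis enters, and it is here that I expect the real work: one must control the homology of this infinite, generally non-locally-finite complex, which is where the ``new integral homology decompositions for certain infinite dimensional unipotent subgroups'' advertised in the abstract must be doing the heavy lifting — the unipotent radicals $U_J$ of the parabolics are infinite-dimensional when $W$ is infinite, and showing $BU_J(\FF)$ is $\FF_q$-acyclic (so that parabolics and Levis are interchangeable mod $q$) is not formal.

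So the skeleton is: (1) fix $\cS=\{$spherical $J\subseteq S\}$, finite since $|S|<\infty$, and identify $G_J=L_J$ as reductive; (2) realize $BK(\FF)$ as a Borel construction on the spherical-coset complex and verify the stabilizers are the conjugates of the $L_J(\FF)$; (3) prove the requisite mod-$q$ acyclicity statements — both of the coset complex and of the unipotent radicals $U_J(\FF)$ — using the homology decompositions for the $U_J(\FF)$; (4) feed this into the homotopy-colimit spectral sequence to conclude \eqref{introhocoBGq} is an $\FF_q$-homology isomorphism for $q\neq\mathrm{char}\,\FF$; and (5) match this $\cS$-indexed diagram with the more precise bookkeeping of Theorem \ref{hocoBGqfinite}. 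The hard part is step (3): everything else is an assembly of known BN-pair structure theory and standard homology-decomposition formalism, but the acyclicity of the infinite-dimensional unipotent pieces away from the defining characteristic is the genuinely new ingredient and the place a proof can go wrong.
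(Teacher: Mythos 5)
Your proposal is correct and follows essentially the same route as the paper: take $\cS$ to be the finite-type (spherical) subsets of $S$, use the contractibility of the finite-type coset complex to get $\hcl_{I\in\cS}BP_I(\FF)\simeq BK(\FF)$, and then pass from parabolics to Levi factors via the Levi decomposition $P_I\cong G_I\ltimes U_I$ together with the $\Fq$-acyclicity of $BU_I(\FF)$ away from the characteristic (the paper's Theorem \ref{vanishthm}, which you correctly isolate as the essential non-formal input). The only slip is that the stabilizers of the spherical cosets are the conjugates of the parabolics $P_I$, not of the Levis $L_I$ --- but your later remarks make clear you intend exactly the parabolic-to-Levi reduction that the paper carries out with the Serre spectral sequence of $BU_I\to BP_I\to BG_I$.
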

\nod This gives a natural way to propagate cohomological approximations of complex reductive Lie groups to Kac-Moody groups; see also Remark \ref{FMc}.

Theorem \ref{introhocoBGqfinite} in turn depends on a homological vanishing result 
 for key infinite dimensional unipotent subgroups of discrete Kac-Moody groups over fields.
 As explained in \ref{sec:BN}--\ref{sec:rgd}, these subgroups play the same role in the subgroup combinatorics of $K(\FF)$ as the unipotent radicals of parabolic subgroups play in that
 of an algebraic group.

 \begin{introtheorem}
Let $U_I(\FF)$ be the unipotent factor of a parabolic subgroup of a discrete Kac-Moody group over a field. Then,
$H_n(BU_I(\FF), \LL)=0$ for all $n>0$ and any field
$\LL$ of different characteristic.
\label{vanishthm}
\end{introtheorem}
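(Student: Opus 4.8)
The plan is to assemble $BU_I(\FF)$ out of classifying spaces of \emph{finite-dimensional} unipotent groups over $\FF$, by letting $U_I(\FF)$ act on a contractible complex coming from the building of $K(\FF)$, and to reduce the theorem to two soft inputs: that $B$ of the additive group $(\FF,+)$ is $\LL$-acyclic in positive degrees, and that the resulting quotient complex is $\LL$-acyclic. The first ingredient is the base case: for any field $\FF$ and any field $\LL$ of different characteristic, $\widetilde H_*(B(\FF,+);\LL)=0$. The additive group $(\FF,+)$ is a vector space over its prime field, hence a filtered colimit of its finite-dimensional subspaces; since group homology commutes with filtered colimits it is enough to treat $(\ZZ/p)^n$ and $\QQ^n$. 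For $(\ZZ/p)^n$ one uses the K\"unneth formula over the field $\LL$ together with $H_*(\ZZ/p;\LL)=\LL$, valid because $p$ is invertible in $\LL$. For $\QQ^n$ one observes that $H_m(\QQ^n;\ZZ)$ is a $\QQ$-vector space for every $m\geq 1$, so it is annihilated by $-\otimes_\ZZ\LL$ and by $\mathrm{Tor}^\ZZ_1(-,\LL)$ since $\mathrm{char}\,\LL>0$ (this is the only case that occurs when $\mathrm{char}\,\FF=0$). Feeding this through the Lyndon--Hochschild--Serre spectral sequence shows that $\widetilde H_*(BV;\LL)=0$ for every group $V$ built as a finite iterated extension of copies of $(\FF,+)$ --- in particular for the group of $\FF$-points of a split unipotent algebraic group, and for any finite directly spanned product $\prod_{\alpha}U_\alpha(\FF)$ of real root subgroups over a finite prenilpotent set of roots.

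Next, recall that $K(\FF)$ carries a twin $BN$-pair, hence a pair of buildings $\Delta^{\pm}$; let $X$ be the Davis realization of $\Delta^-$, which is contractible by Davis's theorem. The group $U_I(\FF)\leq U^+(\FF)\leq B^+(\FF)$ acts on $X$; after one barycentric subdivision the action is cellular with each cell fixed pointwise by its stabilizer, so the Borel construction $X_{hU_I(\FF)}$ is a model for $BU_I(\FF)$ and the isotropy spectral sequence reads
\[
E^1_{s,t}=\bigoplus_{[\sigma]}H_t\!\left(B\,\Stab_{U_I(\FF)}(\sigma);\LL\right)\Longrightarrow H_{s+t}(BU_I(\FF);\LL),
\]
the sum over $U_I(\FF)$-orbits of $s$-cells. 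The cells of $X$ correspond to cosets $gP_J^-(\FF)$ with $J\subseteq S$ spherical. Using the Birkhoff decomposition $K(\FF)=\bigsqcup_{w\in W}U^+(\FF)\,w\,B^-(\FF)$ and the normality $U_I(\FF)\trianglelefteq U^+(\FF)$, every stabilizer is $U^+(\FF)$-conjugate to $U_I(\FF)\cap wP_J^-(\FF)w^{-1}$; by the structure theory of (twin) root group data this intersection is the group of $\FF$-points of a finite-dimensional split unipotent group, namely the one directly spanned by the finitely many root subgroups $U_\alpha(\FF)$ it contains --- those $\alpha\in\Delta^+_{\mathrm{re}}\setminus\Delta_I$ lying in $N(w)$ or in $w$ times the (finitely many) positive real roots supported on $J$; this set is finite because $\ell(w)<\infty$ and $J$ is spherical. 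By the first paragraph these stabilizers are $\LL$-acyclic in positive degrees, so the spectral sequence collapses onto the row $t=0$ and $H_*(BU_I(\FF);\LL)\cong H_*(X/U_I(\FF);\LL)$.

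It remains to prove that $X/U_I(\FF)$ is $\LL$-acyclic, and this is where I expect the real work. When $I=\emptyset$ it is clean: the twin-building retraction of $\Delta^-$ onto the fundamental apartment, centered at the fundamental chamber of $\Delta^+$, has the $U^+(\FF)$-orbits as its fibers (because $T(\FF)$ preserves each $U^+(\FF)$-orbit on chambers), so $X/U^+(\FF)$ is the Davis complex of the Coxeter system $(W,S)$, again contractible by Davis's theorem; and when $W$ is finite $U_I(\FF)$ is already finite-dimensional unipotent and the first paragraph applies directly. For general $I$ one must run the analogous analysis through the Birkhoff decomposition of $K(\FF)$ relative to the parabolic $P_I(\FF)$ and identify $X/U_I(\FF)$ with a contractible complex --- e.g.\ by covering it by contractible subcomplexes with contractible intersections, or by recognising it as the Davis realization of a building attached to the Levi datum. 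Making the root group datum govern the orbit structure of the infinite-dimensional group $U_I(\FF)$ on $\Delta^-$ in this way is the main obstacle, and amounts to the ``new integral homology decomposition'' advertised in the introduction; the remaining ingredients --- the computation for $(\FF,+)$, its propagation through extensions, and the collapse of the isotropy spectral sequence --- are formal.
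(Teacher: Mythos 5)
Your overall strategy is the same as the paper's, just phrased through the Borel construction rather than through explicit homotopy colimit decompositions: $BU_I(\FF)\simeq X_{hU_I(\FF)}$ for $X$ the contractible Davis realization of the negative building, the cell stabilizers are (subgroups of conjugates of) the finite-dimensional unipotent groups $U_{w_J}$ --- this is the paper's Lemma \ref{stablem} combined with the Levi decomposition --- these are $\LL$-acyclic, and the isotropy spectral sequence collapses onto $H_*(X/U_I(\FF),\LL)$. Your base case for $B(\FF,+)$, its propagation through iterated extensions, and your treatment of $I=\emptyset$ --- where the generalized Birkhoff decomposition identifies $X/U^+(\FF)$ with the Davis complex of $(W,S)$ --- are all correct and match what the paper does in (\ref{quickway}).

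However, for $I\neq\emptyset$ you have not proved the theorem: you explicitly leave open the $\LL$-acyclicity of $X/U_I(\FF)$, and this is not a routine extension of the $I=\emptyset$ case. The orbit poset in question is what the paper calls $\hat{U}_I^+\cdot\cat{W}_\cS$; over each $wW_J$ it has the cosets $\hat{U}_I^+/(\hat{U}_I^+\cap U_{w_J})$, so it fibers over the Davis complex with non-constant, non-contractible fibers, and contractibility of the base gives nothing by itself. The paper's proof of this step (Lemma \ref{combocontract}) is the technical heart of the whole argument: it uses the intersection formula $U_{w}\cap\hat{U}_I^+=U_{w(I)}$ from (\ref{Uwintersect}), the fact that the longest-element functor $L:\cat{W}_\cS\to\cat{W}$ and the truncation $L_I:\cat{W}\to\cat{W}_I$ pull back homotopy colimits (Theorem \ref{combin} and Proposition \ref{w(I)prop}, which are themselves nontrivial Coxeter combinatorics), and finally the $I=\emptyset$ decomposition applied to the Levi factor $G_I$, which carries its own RGD system with positive unipotent subgroup $\hat{U}_I^+$. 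Your closing sentence correctly diagnoses that this is where the content lies, but diagnosing the obstacle is not the same as overcoming it; as written, the proposal establishes Theorem \ref{vanishthm} only for $I=\emptyset$ and for finite $W$.
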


To prove Theorem \ref{vanishthm}, we obtain new structural understanding of these unipotent subgroups---e.g., we obtain new colimit presentations---based in the geometric group theory of Kac-Moody groups but developed on the level of classifying spaces.
 Theorem \ref{hocoUnewthmslick} decomposes the classifying spaces of
  key unipotent subgroups
   as  homotopy colimits
   of finite dimensional unipotent subgroup classifying spaces. These new integral
   homotopy decompositions are natural with respect to Tits's  construction of $K(-)$ \cite{TitsKM} and apply to arbitrary groups with root group data systems (see \ref{sec:rgd}).
    We use a functorial comparison
    of diagrams indexed by the Weyl group to  diagrams indexed by its Davis complex \cite{Davis} (Theorem \ref{combin}) that may be of interest to Coxeter group theorists.  This comparison ultimately reduces our homology decompositions to the contractibility of the Tits building.  
The structural understanding developed also provides
a method to compute the non-trival cohomology of these unipotent subgroups over a field at its characteristic (see Theorem \ref{treecalc}).

Our main application of Theorem \ref{KMbar} is the construction of unstable Adams operations.  These maps are defined as $\psi$ that
fit into the homotopy commutative diagram
\beq
\xymatrixcolsep{4pc}\xymatrix{
BT \ar[r]^{B(t \mapsto t^p)} \ar[d] &  BT  \ar[d]\\
BK  \ar[r]^{\psi} & BK
}
\label{Adamsdef}
\eeq
\nod where the vertical maps are induced by the inclusion $T \le K$ of the standard maximal torus. 
We are more generally interested in $q$--local unstable Adams operations where (\ref{Adamsdef}) is replaced by its functorial localization $(-)\qcp$ with respect to $\Fq$--homology \cite[1.E.4]{Fploc}.
For connected Lie  groups, unstable Adams operations (including $q$--local versions) are  unique, up to homotopy, whenever they exist \cite{JMOconn}.
In \cite{BKtoBK}, unstable Adams operations for simply connected rank 2 Kac-Moody groups where constructed and shown to be unique. 

Classically \cite{Fbar}, Theorem \ref{KMbar} was shown for the group functor $G(-)$ associated to a complex reductive Lie group $G$.
For such $G$,
 the $p$th unstable Adams operation on $\BGq$ is homotopic to
the self-map $B(G(\varphi))\qcp$ for $\varphi$ the Frobenius homomorphism  $x \mapsto x^p$ on $\Fbar$. 
We use the functoriality  of Tits's construction to show that $B(K(\varphi))\qcp$ is an unstable Adams operation for an arbitrary topological Kac-Moody group
via the comparison map of Theorem \ref{KMbar}. The construction of this map depends on choices as in \cite{Fbar} (see \ref{sec:compare}). We
 do not resolve the question of uniqueness.

\begin{introtheorem}
Let $K$  be a topological Kac-Moody group over $\CC$ with Weyl group $W$.
The Frobenius map induces  $p$th $q$--local unstable Adams operations $\psi\co BK\qcp {\rightarrow}  BK\qcp$ for any prime $q \neq p$. 
If there is no element of order $p$ in $W$, these local unstable Adams operations can be assembled into a global unstable Adams operation $\psi: BK {\rightarrow}  BK$.
\label{pskthm}
\end{introtheorem}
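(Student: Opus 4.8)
The plan is to obtain $\psi$ from the Frobenius endomorphism $\varphi\co x \mapsto x^p$ of $\Fbar$ by transport along the comparison map of Theorem~\ref{KMbar}, and then, under the hypothesis on $W$, to upgrade these local operations to a global one. Since Tits's construction $K(-)$ is functorial in commutative rings, $\varphi$ induces an endomorphism $K(\varphi)$ of $\Kfbar$ and hence a self-map $B(K(\varphi))\co \BKfbar \to \BKfbar$. Fix $q \neq p$; Theorem~\ref{KMbar} provides an $\Fq$-homology equivalence $c\co \BKfbar \to BK$ of simply connected spaces, which therefore induces an equivalence $c\qcp\co \BKfbarq \xrightarrow{\sim} \BKq$, and we set $\psi_q := c\qcp \circ B(K(\varphi))\qcp \circ (c\qcp)^{-1}$.

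To verify the defining square~(\ref{Adamsdef}) after applying $(-)\qcp$, I would restrict along the standard maximal torus $T \le K$, which on the discrete side is the split maximal torus $T(\Fbar) = G_\emptyset(\Fbar) \le \Kfbar$ with $T(\varphi)$ the $p$-power map. The map $c$ of Theorems~\ref{introhocoBGqfinite} and~\ref{KMbar} is constructed so as to extend the Friedlander--Mislin maps $BG_I(\Fbar) \to BG_I$ of the reductive subgroups, and $T$ is contained in (indeed equals) the smallest such $G_I$; hence $c \circ (BT(\Fbar) \to \BKfbar)$ is homotopic to the composite of the Friedlander--Mislin map $BT(\Fbar) \to BT$ with $BT \to BK$. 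Since $K(\varphi)$ restricts to $T(\varphi)$ on $T(\Fbar)$ by functoriality, commutativity of~(\ref{Adamsdef}) for $\psi_q$ reduces, after $q$-completion, to the statement that the Friedlander--Mislin map intertwines the $p$-power map on $BT$ with $B(T(\varphi))$; this is the reductive, indeed toral, case of the classical identification in~\cite{Fbar} of $B(G(\varphi))\qcp$ with the $p$-th $q$-local unstable Adams operation. This produces the local operations for every $q \neq p$.

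For the global statement I would build $\psi$ on the complex side from the integral homotopy decomposition $BK \simeq \hcl_{J \in \cS} BK_J$ over the poset $\cS$ of spherical subsets, whose values $BK_J$ are classifying spaces of reductive complex Lie groups with finite Weyl groups $W_J \le W$. If $W$ has no element of order $p$, then by Cauchy's theorem $p \nmid |W_J|$ for every spherical $J$, so each $BK_J$ carries an integral $p$-th unstable Adams operation $\psi_J$; by the homotopy uniqueness of unstable Adams operations on connected compact Lie groups~\cite{JMOconn}, for each $J \subseteq J'$ the square relating $\psi_J$, $\psi_{J'}$ and the structure map $BK_J \to BK_{J'}$ commutes up to homotopy. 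Promoting this homotopy-commutative data to a genuine map of $\cS$-diagrams and passing to homotopy colimits then yields a self-map $\psi$ of $BK$; since the minimal object of $\cS$ is $BK_\emptyset = BT$, the map $\psi$ satisfies~(\ref{Adamsdef}), and its $q$-completion for $q \neq p$ agrees with $\psi_q$ (again by uniqueness on the reductive pieces and the compatibility of $c$ with the reductive-level maps), so that $\psi$ indeed assembles the local operations.

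The principal obstacle is this last assembly step: each $\psi_J$ is only unique up to homotopy and the compatibility squares only commute up to homotopy, so rigidifying this into a strict natural transformation of $\cS$-diagrams---so that $\hcl_\cS \psi_J$ is defined---requires controlling higher derived limits over $\cS$, concretely the vanishing of $\lim^i_{\cS}$ of the relevant homotopy groups of the mapping spaces $\mathrm{map}(BK_J, BK_J)$. I would resolve this with the obstruction-theoretic machinery of Jackowski--McClure--Oliver for maps between classifying spaces of compact Lie groups, which shows that the pertinent self-map components are homotopically discrete after completion and that the obstruction groups vanish. Alternatively, one can bypass the coherence problem by working one prime at a time: the $\psi_q$ ($q \neq p$) are in hand, a $p$-complete operation on $\BKp$ exists because $p \nmid |W_J|$ makes each $(BK_J)\pcp$ carry a $p$-th unstable Adams operation, a rational operation also exists, and one glues via the Sullivan arithmetic square, at the cost of checking compatibility at the rational corner.
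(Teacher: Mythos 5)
Your construction of the local operations is essentially the paper's: the paper also lets the Frobenius act on the diagram $\{BG_I(\Fbar)\}_{I\in\cS}$, where it is a \emph{strictly} commuting family of group homomorphisms (so there is no coherence issue), and transports along the localized comparison equivalences; your verification of the square (\ref{Adamsdef}) by restricting to $G_\emptyset=T$ and quoting Friedlander--Mislin is the intended one. One small slip: $\BKfbar$ is a $K(\pi,1)$ of an infinite discrete group and is certainly not simply connected; $c\qcp$ is an equivalence simply because $(-)\qcp$ is Bousfield localization at $\Fq$--homology and $c$ is an $\Fq$--homology isomorphism, not because of simple connectivity.

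The global assembly is where your argument has a genuine gap, and it is exactly the one you flag but do not close. Producing $\psi_J$ on each $BG_J$ individually, unique only up to homotopy, gives a homotopy-coherent family over $\cS$; to form $\hcl_\cS\psi_J$ you must rigidify, and the obstructions live in $\lim^{i+1}_{\cS}\pi_i\bigl(\mathrm{map}(BG_J,BG_{J'})_{\psi}\bigr)$ for the \emph{uncompleted} mapping spaces. Neither you nor \cite{JMOconn} computes these for this poset; the Jackowski--McClure--Oliver machinery you invoke operates on $p$-completed mapping spaces and on specific decompositions, and there is no reason its vanishing results transfer here. The paper never faces this problem: it works one prime at a time on $BK$ itself and glues with the arithmetic fibre square (\ref{fibresquare}). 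At primes $q\neq p$ the operation is the local one already built; at $p$ the hypothesis on $W$ gives $BN\pcp\simeq BK\pcp$ for $N$ the normalizer of the torus, and the operation is induced by a single explicit group endomorphism $\theta$ of $N$ defined on Kumar's generators by $t\mapsto t^{p}$, $s_i\mapsto s_i$ (one checks Kumar's relations, which requires $p$ odd --- automatic unless $W$ is trivial, since any nontrivial $W$ contains an involution); rationally $BK^{\wedge}_{\QQ}$ is a localized homotopy colimit of products of $K(2m_i,\QQ)$'s and the $p$-power map on $S^1$ induces the operation there. Your fallback ``arithmetic square'' route is the paper's actual strategy, but note that your proposed $p$-complete piece still assembles the $\psi_J$ over the diagram $\cS$ and so inherits the same unaddressed coherence problem; replacing that step by the single homomorphism $\theta\co N\to N$ is what you are missing.
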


For reductive $G$,
homotopy fixed points $(\BGfqbar)^{h\varphi^k}$ with respect to the map induced by the Frobenius homomorphism coincide   with ordinary fixed points  $\BGfq$ \cites{Friedbook,Fbar}.
Our work began to investigate if this fact could be extended to Kac-Moody groups. 
Among discrete Kac-Moody groups,  $K(\Fpk)$  are of special interest  as a class of finitely generated, locally finite groups (cf. \cite{propT});
 R\`{e}my develops an extended argument  in \cite{KMasdiscrete} that $K(\Fpk)$ should be considered generalizations of certain $S$--arithmetic groups.
 Like $G(\Fpk)$ classically, understanding the cohomology and homotopy of $K(\Fpk)$ via the better understood $K$ (cf. \cite{nitutkm}) is desirable.

 From the perspective of homotopical group theory, unstable Adams operations on $p$--compact groups yield examples of $p$--local finite groups after taking homotopy fixed points \cite{BrMø} (see \cite{AG} similar
 results for $p$--local compact groups).
To the extent that homotopy Kac-Moody groups may generalize homotopy Lie groups (as proposed e.g., in Grodal's 2010 ICM address \cite{JG}), unstable Adams operations on Kac-Moody groups and their homotopy fixed points
are interesting.
Through cohomology calculations with classifying spaces of compact Lie subgroups, we provide evidence that, in contrast to the classical setting, $(\BKq)^{h\psk}$ and $\BKfq$ rarely agree.

\begin{introtheorem}
Let $K$ be  an infinite dimensional simply connected complex Kac-Moody group of rank 2 and $\psi$ be its unique $p$th $q$--local unstable Adams operation
for primes $q \neq p$  with $q$ odd. Then,  $H^\ast((\BKq)^{h\psk}, \Fq)=H^\ast(\BKf,\Fq)$  if and only if they both vanish.
\label{rank2thm}
\end{introtheorem}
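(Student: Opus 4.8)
The plan is to compute both sides explicitly from the rank two homotopy decomposition and to see that equality forces triviality. For infinite dimensional simply connected $K$ of rank two the Weyl group $W$ is infinite dihedral, the maximal torus is $T\cong(\CC^\times)^2$, and the two minimal parabolic Levi subgroups $L_1,L_2$ are rank two reductive Lie groups of semisimple rank one; the rank two case of the decomposition behind Theorem \ref{KMbar} is a homotopy pushout $BK\simeq BL_1\cup_{BT}BL_2$. Running the same decomposition over $\Fpk$ and applying Theorem \ref{KMbar} together with the Friedlander--Mislin theorem \cite{Fbar} to the reductive pieces identifies $\BKfq$ with the homotopy pushout of $BL_1(\Fpk)^{\wedge}_q\leftarrow BT(\Fpk)^{\wedge}_q\to BL_2(\Fpk)^{\wedge}_q$ and each $BL_i(\Fpk)^{\wedge}_q$ with $((BL_i)^{\wedge}_q)^{h\psk}$. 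So the only structural difference between the two spaces in the statement is that $\BKfq$ takes $\psk$--homotopy fixed points inside the pushout whereas $(\BKq)^{h\psk}$ takes them of the whole pushout; as $(-)^{h\psk}$ is a homotopy limit these need not agree, and the theorem records precisely when they do on $\Fq$--cohomology.

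I would compute the homotopy fixed point side first. Since $\BKq$ is simply connected, $(\BKq)^{h\psk}$ sits in a fibration over $\BKq$ with fibre $\Omega\BKq\simeq K^{\wedge}_q$, and the transgression in its Serre spectral sequence is $(\psk)^\ast-1$. For $q$ odd, $H^\ast(\BKq,\Fq)$ is known: below twice the order of the Coxeter element of $W$ modulo $q$ it is the polynomial algebra on the $W$--invariant quadratic form $Q$ (cohomological degree $4$), on which $\psk$ acts by $p^{2k}$, and the only positive--degree class of $H^\ast(\Omega\BKq,\Fq)$ in degrees $\le 3$ is the degree $3$ class $\bar Q$ transgressing to $Q$. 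Hence: if $q\nmid p^{2k}-1$ then $\psk-1$ is an $\Fq$--homology isomorphism of $\BKq$, so $(\BKq)^{h\psk}$ is $\Fq$--acyclic; if $q\mid p^{2k}-1$ then $\bar Q$ transgresses to $(p^{2k}-1)Q=0$ and survives, giving $\dim_{\Fq}H^3((\BKq)^{h\psk},\Fq)=1$, the rest of the spectral sequence contributing nothing in that degree.

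For the discrete side I would compute $H^\ast(\BKf,\Fq)$ from the Mayer--Vietoris sequence of the pushout, using the mod--$q$ cohomology of the finite torus $T(\Fpk)$ and of the rank $\le 1$ finite groups of Lie type $L_i(\Fpk)$. The primes other than $p$ dividing $|T(\Fpk)|$ and $|L_i(\Fpk)|$ are exactly those dividing $p^{2k}-1$, and as $q$ is odd no $2$--primary subtleties intervene; so when $q\nmid p^{2k}-1$ all three spaces are $\Fq$--acyclic and $H^\ast(\BKf,\Fq)=\Fq$. When $q\mid p^{2k}-1$ the two subcases (disjoint for $q$ odd) both give $\dim_{\Fq}H^3(\BKf,\Fq)=2$: if $q\mid p^k+1$ then $T(\Fpk)$ is $\Fq$--acyclic and $\BKfq\simeq BSL_2(\Fpk)^{\wedge}_q\vee BSL_2(\Fpk)^{\wedge}_q$, each wedge summand having $\Fq$--cohomology $\Fq[z]\otimes\Lambda(\xi)$ with $|z|=4$, $|\xi|=3$; if $q\mid p^k-1$ then $T(\Fpk)$ carries a Sylow $q$--subgroup of $L_i(\Fpk)$, so the restriction maps $H^\ast(BL_i(\Fpk)^{\wedge}_q,\Fq)\to H^\ast(BT(\Fpk)^{\wedge}_q,\Fq)$ are injective, and a direct computation shows that the degree $2$ cokernel of the Mayer--Vietoris map is one--dimensional while the images of the two degree $3$ cohomology groups meet in a line, totalling $2$.

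Putting these together, $\dim_{\Fq}H^3$ of the homotopy fixed point side lies in $\{0,1\}$ and of the discrete side in $\{0,2\}$, both equal to $0$ exactly when $q\nmid p^{2k}-1$, in which case both cohomology rings reduce to $\Fq$. Therefore an isomorphism of the two cohomology rings forces agreement in degree $3$, hence $q\nmid p^{2k}-1$, hence both rings vanish; the converse is trivial. The main obstacle is the explicit Mayer--Vietoris bookkeeping on the discrete side in the split subcase $q\mid p^k-1$, and in parallel making the rank two Kac--Moody cohomology computation with its Adams action precise enough to run the argument uniformly in $p,k,q$ --- in particular isolating and separately handling the finitely many primes $q$ at which the invariant quadratic form degenerates or a Cartan entry of $K$ vanishes modulo $q$, where the low--degree cohomology changes shape and the $\dim H^3$ comparison must be redone.
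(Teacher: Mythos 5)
Your overall architecture --- compute both sides from the rank-two pushout and compare in a fixed low degree --- is the paper's, but your computation of the homotopy fixed point side rests on an incomplete description of $H^\ast(\BKq,\Fq)$, and this creates a genuine gap. For infinite dimensional rank two $K$ and odd $q$ one has $H^{\ast}(BK, \Fq) = \Fq[x_4, x_{2l}] \otimes \Lambda(x_{2l+1})$ with $l=l(\{a,b\},q)\ge 2$ depending on $q$ and the Cartan matrix, and $\psk$ acts by $p^{2k}$ on $x_4$ but by $p^{lk}$ on $x_{2l}$ and $x_{2l+1}$ (Proposition \ref{kmacts}); your description retains only the degree $4$ generator. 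Consequently your dichotomy ``$(\BKq)^{h\psk}$ is $\Fq$--acyclic iff $q\nmid p^{2k}-1$'' is false: whenever the multiplicative order of $p^k$ modulo $q$ is greater than $2$ but divides $l$, one has $p^{lk}\equiv 1$ and $p^{2k}\not\equiv 1 \pmod q$, the Eilenberg--Moore $E_2$--term is $\Lambda(x_{2l-1})\otimes\Gamma(x_{2l})\otimes\Fq[s_{2l}]\otimes\Lambda(s_{2l+1})$ (Table \ref{tablekmfixedthm}), and the filtration zero class $s_{2l}$ is a permanent cycle that cannot be a boundary for degree reasons, so $(\BKq)^{h\psk}$ has nontrivial $\Fq$--cohomology concentrated in degrees $\ge 2l-1$ while $\BKfq$ is $\Fq$--acyclic. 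The theorem itself survives in that regime (the two sides are unequal and not both zero), but your proof does not, since everything after the acyclicity claim reduces to a count of $H^3$, which is zero on both sides there.

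Even in the cases you do treat, the degree $3$ comparison alone is insufficient. When $l=2$ the $E_2$--term for $(\BKq)^{h\psk}$ in the case $p^{k}\equiv -1\pmod q$ contains two exterior classes $x_3, x_{2l-1}$ of total degree $3$, so $\dim_{\Fq}H^3$ equals $2$ on both sides and one must pass to degree $5$ (where the class $s_{2l+1}=s_5$ survives on the fixed point side but the discrete side vanishes) --- this is exactly why the paper compares ``degrees $3$ and $5$''. In the case $p^{k}\equiv 1\pmod q$ both sides are nonzero in infinitely many degrees and the paper abandons low degrees entirely: it identifies the pieces of the discrete side with twisted loop spaces via Kishimoto--Kono, computes $\ke(\Delta)\cong\Fq[s_4,s_{2l}]\otimes\Lambda(x_3,x_{2l-1})$ as $W$--invariants, and distinguishes the two sides by a rank count in degree $4l$ ($6$ or $8$ versus $5$ or $6$, according to the parity of $l$), also verifying that the relevant Eilenberg--Moore classes are permanent cocycles. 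To repair your argument you would need to reinstate the generators $x_{2l},x_{2l+1}$ throughout, replace the single congruence on $p^{2k}$ by the pair of congruences on $p^{2k}$ and $p^{kl}$ modulo $q$ (four cases, as in Table \ref{tablekmfixedthm}), and choose comparison degrees that work uniformly in $l$ rather than fixing degree $3$.
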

\nod
Our comparison methodology for Theorem \ref{rank2thm} is applicable to general Kac-Moody groups, but there are many technical challenges (see the discussion beginning \ref{subsec:r2})  and possible issues with uniqueness.
Nevertheless,   Theorem \ref{introhocoBGqfinite} does provide a homology decomposition of $K(\Fpk)$ in terms of finite subgroups.

\subsection{Organization of the paper}
As described above, the logical progression of this paper begins with a structural understanding of
key unipotent subgroups of discrete Kac-moody groups. This structure implies the vanishing theorem \ref{vanishthm} which
leads to a proof of Theorem \ref{introhocoBGqfinite}.
 Our map from $\BKfbar$  to $BK$ in Theorem \ref{KMbar} is constructed by  a compatible
family of Friedlander--Mislin maps \cite[Theorem 1.4]{Fbar} which give the desired homological approximation by Theorem \ref{introhocoBGqfinite}.
The functoriality  of Tits's $\Kfbar$ induces an unstable Adams operation on $BK$, but cohomology calculations for Theorem \ref{rank2thm} show that
we cannot expect $\BKfq$ to be expressible as the homotopy fixed points of these operations.

Concretely, this paper is structured as follows.
In Section \ref{sec:combo},  we collect  background material from geometric group theory 
and methods to manipulate homotopy colimits. 
 Section  \ref{sec:homodecomp} proves our
 unipotent subgroup homotopy decompositions, Theorem \ref{hocoUnewthmslick},  based in subgroup combinatorics for root group data systems
 and a diagram comparison Theorem \ref{combin}
 based in Coxeter combinatorics.
In \ref{sec:van}, we describe applications of Theorem \ref{hocoUnewthmslick} to discrete Kac-Moody groups over a field, 
including proofs of Theorems \ref{KMbar} and \ref{introhocoBGqfinite}.
We employ these results in \ref{sec:compare} to define unstable Adams operations for general Kac-Moody groups as in Theorem \ref{pskthm}.
Section \ref{sec:compmain} proves Theorem \ref{rank2thm} and
outlines explicit calculations of the group cohomology of the main unipotent subgroup of $K(\Fpk)$  at the prime $p$ in most cases where the Weyl group is a free product.

 \begin{nota}
 As noted above, $K(R)$, for a commutative ring $R$,  denotes the value of one of Tits's  explicit
  discrete, minimal, split  Kac-Moody group functors \cite[3.6]{TitsKM}. We generally use $K$ for a minimal topological Kac-Moody group \cite[7.4.14]{kumar}, but to simply typography, we occasionally
  abbreviate  $K(R)$ to $K$ when $R$ is fixed throughout an argument.
 We define the rank of $K$ to be the rank of its maximal torus. A  generalized Cartan matrix is  a square integral matrix $A=(a_{ij})_{1 \le i,j \le n}$ such that
$a_{ii}=2 $, $a_{ij} \le 0$ and  $a_{ij}=0 \iff a_{ji}=0$ and we reserve the notation $A$ for the generalized Cartan matrix used in constructing $K$.  To simplify statements, we use $(-)\qcp$ to denote a Bousfield $\Fq$--homology localization functor of spaces \cite[1.E.4]{Fploc} rather than the more common Bousfield-Kan $q$-completion functor \cite{BK}. These two functors agree, up to homotopy, for any space $X$ that is nilpotent or has $H_1(X,\Fq)=0$. Math mode bold is used for categories.  For instance, each Coxeter group $W$ has an associated poset $\cat{W}$.  Our results are stated in category of topological spaces with the usual weak equivalences $\cat{Top}$, but
$\cat{Spaces}$---which can be taken to be topological
spaces, $CW$--complexes, or simplicial sets---is used in technical results when we wish emphasize that there is little dependence on $\cat{Top}$.
  \end{nota}

  \begin{ack}
  The results of this paper come from the author's thesis \cite{foleythesis}.  The author is
   most grateful to Nitu Kitchloo who advised this thesis, introduced the author to Kac-Moody groups, and suggested looking for a generalization of the work of Quillen--Friedlander--Mislin.
 The author is also indebted to peers with whom we discussed this work, especially Ben Hummon, Joel Dodge and Peter Overholser.  Some of the results of this paper were proved independently in rank two cases \cite{AR} by explicit cohomology calculations and descriptions of unipotent subgroups.
   More specifically, Proposition 4.3 and Section 9 of \cite{AR} describe specializations of our Theorems \ref{vanishthm} and \ref{rank2thm}, respectively and \cite[Proposition 8.2]{AR} overlaps with our Table \ref{colimoffixed}.
  The support of the Danish National Research Foundation (DNRF) through the Centre for Symmetry and Deformation (DNRF92) during the preparation of this paper is gratefully acknowledged.
  \end{ack}

\section{Combinatorial tool kit}
\label{sec:combo}

Like Lie groups, Kac-Moody groups have (typically infinite) Weyl groups that underlie combinatorial structures on subgroups including  $BN$--pairs and root group data systems.
In \ref{sec:cox}--\ref{sec:rgd}, the aspects of these structures we use are assembled. We also collect tools for manipulating homotopy colimits (\ref{sec:pbmain}--\ref{sec:trans}) which relate to colimit presentations of groups via Seifert-van Kampen theory (\ref{sec:cosetgeo}). With a homotopy theoretic framework in place, \ref{sec:known} quickly reviews two known homotopy decompositions associated to Coxeter groups and $BN$--pairs
and how they apply to Kac-moody groups.

\subsection{Coxeter groups}
\label{sec:cox}

Because of their role in $BN$--pairs and RGD systems, the structure of Coxeter groups will be important to the arguments here.  For our purposes, a Coxeter group will be a {\em finitely} generated group with a presentation
  \beq
  \langle s_1, s_2, \ldots s_n | s_i^2=1,  (s_i s_j)^{m_{ij}} = 1 \rangle
  \label{cox}
  \eeq
  \nod for $1 \le i, j \le n$ and fixed $2 \le m_{ij}=m_{ji} \le \ff $ with  $m_{ij}=\ff$ specifying a vacuous relation.  For example, the presentation the Weyl group of a Kac-Moody group is determined by its generalized Cartan matrix
  $A=(a_{ij})_{1 \le i,j \le n}$, i.e.
  \beq
  m_{ij}=\left\{
    \begin{array}{lr}
    2 & ~ a_{ji}a_{ij}=0\\
   3 & ~ a_{ji}a_{ij}=1\\
   4 & ~ a_{ji}a_{ij}=2\\
   6 & ~ a_{ji}a_{ij}=3\\
   \ff & ~ a_{ji}a_{ij}\ge 4
    \end{array}
\right.
\label{kmweyl}
  \eeq
  \nod for all $i\neq j$ \cite[p. 25]{kumar}. 
The solution to the word problem for Coxeter groups gives us a detailed picture
of $W$.

\begin{theoremoutside}[Word Problem \cite{Buildings}] Let $W$ be a Coxter group.  For any words ${\vec{v}}$ and ${\vec{w}}$ in letters   $s_1, s_2, \ldots s_n$
\begin{itemize}

\item A word ${\vec{w}}$ is reduced in $W$ if and only if it cannot be shortened by a finite sequence of $s_i s_i$ deletions and replacing of the length $m_{ij}< \ff$ alternating words $s_i s_j \ldots$ by  $s_j s_i \ldots$ or vice versa.

\item  Reduced words ${\vec{v}}$ and  ${\vec{w}}$ are equal in $W$ if and only if ${\vec{v}}$ transforms into ${\vec{w}}$ via a finite sequence of length $m_{ij}< \ff$ alternating word by  replacements as describe above.
\end{itemize}
\label{wordproblem}
\end{theoremoutside}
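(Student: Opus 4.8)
The plan is to recover Tits's theorem from the canonical reflection representation of $W$ via the Exchange and Deletion conditions, and then to run a clean induction on word length. First I would realise $W$ on $\RR^n$ through its reflection representation, with root system $\Phi=\Phi^+\sqcup\Phi^-$, and set $\ell(w)$ equal to the length of a shortest word representing $w$, so that ``reduced'' means ``of length $\ell(w)$''. One establishes the sign-change formula $\ell(w)=\#\{\alpha\in\Phi^+:w^{-1}\alpha\in\Phi^-\}$, whence $\ell(sw)=\ell(w)\pm1$ and $\ell(ws)=\ell(w)\pm1$ for every simple reflection $s$. From this comes the \emph{Exchange Condition}: if $s_{i_1}\cdots s_{i_k}$ is reduced and $\ell(s_js_{i_1}\cdots s_{i_k})\le k$, then $s_js_{i_1}\cdots s_{i_k}=s_{i_1}\cdots\widehat{s_{i_t}}\cdots s_{i_k}$ in $W$ for some $t$ (and the mirror statement holds on the right), and hence the \emph{Deletion Condition}: a non-reduced word equals in $W$ a word obtained from it by deleting two of its letters.

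\textbf{The dihedral lemma.} By induction on $\ell(w)$ using the Exchange Condition, I would prove: if $s\neq s'$ are simple reflections that are both left descents of $w$ (i.e.\ $\ell(sw)<\ell(w)$ and $\ell(s'w)<\ell(w)$), then $m:=m_{ss'}<\ff$ and $w$ has a reduced expression beginning with the alternating word $\underbrace{ss's\cdots}_{m}$. Concretely this is the rank-two fact that the parabolic $\langle s,s'\rangle$, which the Deletion Condition identifies with the Coxeter group on $\{s,s'\}$, must be \emph{finite} dihedral under this hypothesis, and that $w$ sits ``beyond'' its longest alternating word in the length-additive coset decomposition.

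\textbf{The induction.} Now prove, by induction on $n$, the two assertions $(\mathrm{a}_n)$ ``any two reduced words of length $\le n$ representing the same element are related by a finite sequence of braid moves'' (a braid move replaces a consecutive alternating $s_is_j\cdots$ of length $m_{ij}<\ff$ by $s_js_i\cdots$) and $(\mathrm{b}_n)$ ``any non-reduced word of length $\le n$ can be carried by braid moves to a word containing a consecutive $s_is_i$''. For $(\mathrm{a}_n)$: given reduced words $\vec v,\vec w$ for $w$, if their first letters coincide, strip them and invoke $(\mathrm{a}_{n-1})$; if the first letters $s\neq s'$ differ, the dihedral lemma produces reduced words for $w$ with prefixes $\underbrace{ss's\cdots}_{m}$ and $\underbrace{s'ss'\cdots}_{m}$, to which $\vec v$ and $\vec w$ are braid-equivalent by $(\mathrm{a}_{n-1})$ (same first letter, then strip), and a single braid move $\underbrace{ss's\cdots}_{m}\mapsto\underbrace{s'ss'\cdots}_{m}$ links the two. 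For $(\mathrm{b}_n)$: let $b$ be least with $s_{i_1}\cdots s_{i_b}$ non-reduced; then $s_{i_1}\cdots s_{i_{b-1}}$ is reduced and right multiplication by $s_{i_b}$ lowers length, so the right Exchange Condition gives $t\ge1$ with $s_{i_t}\cdots s_{i_{b-1}}=s_{i_{t+1}}\cdots s_{i_{b-1}}s_{i_b}$; both sides are reduced words of length $<n$ for the same element, hence braid-equivalent by $(\mathrm{a}_{n-1})$, and performing those moves on positions $t,\dots,b-1$ of the original word creates the consecutive pair $s_{i_b}s_{i_b}$ at positions $b-1,b$. Finally, since braid moves preserve both the represented element and the word length while an $s_is_i$-deletion strictly shortens, $(\mathrm{b}_n)$ lets one repeatedly braid-then-delete until a reduced word is reached (termination by the length bound), and no such moves can shorten an already reduced word; this is the first bullet, and the second bullet is exactly $(\mathrm{a}_n)$ for all $n$.

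\textbf{Expected obstacle.} Everything past the length formula is essentially bookkeeping once Exchange/Deletion are available; the real content — and where I expect to spend effort — is the dihedral lemma, since that is the point at which the \emph{finiteness} of the particular bond $m_{ss'}$ is forced and at which the braid relation $\underbrace{ss's\cdots}_{m}=\underbrace{s'ss'\cdots}_{m}$ genuinely enters the argument. One could instead proceed geometrically, identifying words with galleries in the Coxeter complex, reduced words with minimal galleries, and braid moves with elementary homotopies, so that the theorem becomes ``minimal galleries with common endpoints are elementarily homotopic'' — the route taken in \cite{Buildings} — but the combinatorial plan above avoids developing that machinery.
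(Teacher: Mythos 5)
The paper does not prove this statement at all: it is quoted as background, with the proof deferred entirely to the citation \cite{Buildings}, so there is no ``paper's own proof'' to compare against. Your outline is the standard Matsumoto--Tits argument and is correct in structure: reflection representation $\Rightarrow$ length formula $\Rightarrow$ Exchange/Deletion, then the dihedral (two-descents) lemma, then the double induction on $(\mathrm{a}_n)$ and $(\mathrm{b}_n)$. The inductive steps check out --- in $(\mathrm{a}_n)$ the comparison of $\vec v$ with the word prefixed by $\underbrace{ss's\cdots}_{m}$ reduces to length $n-1$ after stripping the common first letter, and in $(\mathrm{b}_n)$ the two sides of the Exchange identity are reduced words of length $b-t\le n-1$, so $(\mathrm{a}_{n-1})$ applies and the braid moves, being local, lift to the ambient word. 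The one place where real work remains hidden is the dihedral lemma (equivalently, the length-additive decomposition $w=w_{\{s,s'\}}\cdot u$ with $w_{\{s,s'\}}$ the longest element of a necessarily \emph{finite} dihedral parabolic), which you correctly flag as the crux; as stated it is the right lemma and follows from the parabolic coset decomposition plus the Exchange Condition. For what it is worth, Abramenko--Brown obtain the same foundational Exchange/Deletion input via foldings of the Coxeter complex rather than the linear representation, and then run an essentially parallel induction; your route trades that geometric machinery for the reflection representation, which is a perfectly standard and self-contained alternative.
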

Thus, every element of a Coxeter group  has a well-defined  reduced word length. This provides a poset structure on $W$, called the {\bf weak Bruhat order}, defined as
\beq
v \le w \iff w = vx ~~~{\rm and}~~~ l(w)=l(v)+l(x)
\label{weakorder}
\eeq
\nod where $x,v,w \in W$ and $l(y)$ is the reduced word length of $y$.
This poset will be important to us because it dictates the intersection pattern of the subgroups $U_w$ (\ref{uwdef}) of a group with RGD system (see \ref{sec:rgd}).

For any $I \sset S$ define
$
W_I := \langle \{ s_I\}_{i \in I \sset S}  \rangle
$ and
let
\beq
\cS := \{ I \sset S | |W_I| < \ff  \}
\label{Sposet}
\eeq
\nod be the poset ordered by inclusion.  Note that
 $I \in \cS$ implies that $wW_I$ has a unique longest word $w_I$.  We call $I \in \cS$ and their associated $W_I$  {\bf finite type}.
We emphasize that a Kac-Moody group is of Lie type if and only if its Weyl group given by (\ref{kmweyl}) is finite.

\subsection{$BN$--pairs}
\label{sec:BN}

Here we follow {\em Buildings} \cite{Buildings}. 

\begin{defn}
A group $G$ is a $BN$--pair if it has data $(G, B, N, S)$ where  $B$ and $N$ are distinguished normal subgroups that together generate $G$.  Furthermore, $T := B \cap N \unlhd N$ and $W := N/T$ is generated by $S$ so that for $\overline{s} \in s \in S$ and $\overline{w} \in w \in W$:

{\bf BN1:} $\overline{s} B \overline{w} \subset B \overline{sw} B \cup B \overline{w} B$

{\bf BN2:} $\overline{s} B \overline{s^{-1}} \nsubseteq B$ .

\end{defn}

Note that the above sets are well-defined as $B \geq T  \unlhd N$.  It is common in the literature, and will be common in this paper, to drop the bars and avoid reference to particular representatives of elements of $W$.

  For Kac-Moody groups,  $T$  is the standard maximal torus of rank $2n-\rank(A)$, $N$ is the normalizer of $T$,  $|S|=n$ is the size of the generalized Cartan matrix and $B$ is the standard Borel subgroup defined analogously to the Lie case.  In line with  \ref{sec:cox}, we will further require that $S$ is a finite set.

Important properties of $BN$--pairs include that $W$ is a Coxeter group and $G$ admits a {\bf Bruhat decomposition}
\beq
G = \coprod_{w\in W} B w B
\label{bruhatdecomp1}
\nonumber
\eeq
\nod so that all subgroups of $G$ containing $B$ are of the form
$
P_I = \coprod_{w\in W_I} B w B
$ where $I \sset S$ and $W_I$ is  generated by $I$.  These subgroups are called {\bf standard parabolic subgroups} and inherit the structure of a $BN$--pair with data $(P_I, B, N \cap P_I, I)$ and Weyl group $W_I$.
For Kac-Moody groups $A_J:=(a_{ij})_{i,j \in J}$ of $A$ determines a Weyl group  $W_J$ group for $P_J$ via (\ref{kmweyl}).
  The $BN$--pair axioms
then lead \cite{kumar} to {\bf generalized Bruhat decompositions}
\beq
G  = \coprod_{w\in W_J \setminus W / W_I} P_J w P_I .
\label{bruhatdecomp}
\eeq

\subsection{Root group data systems}
\label{sec:rgd}

Much of the work in this paper could be adapted to refined Tits systems \cite{relations} but we prefer to work with root group data (RGD) systems. Notably the framework of RGD systems has been used to prove \cite{CR, Buildings} the colimit presentation induced by  Theorem \ref{hocoUnewthmslick} (see Remark \ref{rem:cosetgeo} in \ref{sec:cosetgeo}) which was conjectured by Kac and Petersen in \cite{relations}.

Briefly, a RGD system for a group $G$ is a given by a tuple $(G,\{ U_\alpha \}_{\alpha \in \Phi}, T)$ for $\Phi$  associated to a Coxeter system $(W,S)$.
The elements of the set $\Phi$ are called roots and  $U_\alpha$ are nontrivial subgroups of $G$ known as {\bf root subgroups}.  The RGD subgroups generate $G$, i.e. $G = \langle T, \{ U_\alpha \}_{\alpha \in \Phi} \rangle$.
  In the case of Tits's Kac-Moody group functor,
 $U_\alpha$ are isomorphic to the base ring as a group under addition and $T$ is the standard maximal torus isomorphic to a finite direct product of the multiplicative group of units of the base ring. As the complete definition is somewhat involved and the arguments here can be followed with the Kac-Moody example in mind, we refer the reader to \cite{Buildings} for the standard definition of a RGD system and further details
  but note that \cite{CR} provides an alternative formulation.

The RGD structure regulates  the conjugation action of $W$ on $U_\alpha$.
 In particular, $\Phi$ has a  $W$--action
 and is the union of the orbits of the simple roots, $\alpha_i$, corresponding to the elements, $s_i$, of $S$. For $n \in w \in W$
$
 U_{w\alpha}= nU_\alpha n^{-1}
 $.

  In the Kac-Moody case, $\Phi = W \{\alpha_i\}_{1\le i\le n} \sset \sum_{1\le i\le n} \ZZ \alpha_i$ with the $W$--action given by
  \beq s_j (\sum_{i=1}^{k} n_i \alpha_i) = \sum_{1 \le i \neq j \le k} n_i \alpha_i - ( n_j + \sum_{1 \le i \neq j \le k} n_i a_{ij})\alpha_j  .
\label{actiondef2}
\eeq
\nod where $a_{ij}$ are entries in the generalized Cartan matrix.

The set of roots $\Phi$  is divided into positive and negative roots so that $\Phi= \Phi^- \coprod \Phi^+$. For Kac-Moody groups,
$\Phi^-$ and $\Phi^+$
corresponds to the subsets of $\Phi$ with all negative or all positive coefficients, respectively.  Define
\beq
U^\pm =  \langle U_\alpha  ; \alpha \in \Phi^\pm \rangle.
\label{U+def}
\eeq
\nod The group $G$ carries the structure of a $BN$--pair for either choice of $B = TU^\pm$ and $N$ the normalizer of $T$.
There are also well-defined
\beq
 U_w = \langle U_{\alpha_{i_1}},  U_{s_{i_1}\alpha_{i_2}}, \ldots  U_{s_{i_1} \ldots s_{i_{k-1}}\alpha_{i_k}} \rangle
 \label{uwdef}
 \eeq
\nod  where $\Theta_w:=\{ {\alpha_{i_1}}, \alpha_{i_2}, \ldots, {ws_{i_l}\alpha_{i_l} }\}=\Phi^{+} \cap w\Phi^{-}$ is well-defined for any reduced expression  for $w=s_{i_1} \ldots s_{i_k}$.  Moreover, the multiplication map
\beq
U_{\alpha_{i_1}} \times U_{s_{i_1}\alpha_{i_2}} \times \ldots \times U_{ws_{i_l}\alpha_{i_l} } \stackrel{m}{\longrightarrow} U_w
\label{multiply}
\eeq
\nod is an isomorphism of sets for any choice of reduced expression  for $w=s_{i_1} \ldots s_{i_k}$.
 For any index set $I \sset W$
\beq
\bigcap_I U_w = U_{\inf_\cat{W} \{I\}}
\label{Uwintersect}
\eeq
\nod where the greatest lower bound is with respect to the weak Bruhat order (\ref{weakorder}).

For a group with RGD system, there is a symmetry between the positive and negative roots, as in the Kac-Moody case. In particular,  there is another RGD system for $G$,
 $(G,\{ U_\alpha \}_{\alpha \in -\Phi}, T)$, with the positive and negative root groups interchanged.  This induces a twin $BN$--pair structure on $G$ which guarantees
 a {\bf Birkhoff decompostion}
\beq
G = \coprod_{w\in W} B^+ w B^-
\label{genbirkhoff}
\nonumber
\eeq
\nod which when combined with the Bruhat decomposition for $P_I$ (\ref{bruhatdecomp}) and the fact that $B=U^+T$ easily leads to a {\bf generalized Birkhoff decomposition}

\beq
G = \coprod_{ wW_J \in W/W_J} U^+ w P_J^-
\label{genbirkhoff}
\eeq
\nod where $U^+ w P_J^- = \coprod_{v \in wW_J} U^+ w B^-$ (see \cite{foleythesis} for details).
 Using the standard RGD axioms, this symmetry between  positive and negative   is somewhat subtle.
Known proofs employ covering space theory \cites{Buildings,CR}.

The loc. cit. covering space arguments  imply for $n \in w \in W$
\beq
U^\pm \cap nU^\mp n^{-1}= U^\pm \cap wB^\mp w^{-1}= U^\pm_{w}
\label{uwstabs1}
\eeq
\nod where $U^+_{w}:= U_{w}$ and $U^-_{w} =\langle U_{-\alpha_{i_1}},  U_{-s_{i_1}\alpha_{i_2}}, \ldots  U_{-s_{i_1} \ldots s_{i_{k-1}}\alpha_{i_k}} \rangle$. More generally (see Lemma \ref{stablem}), for all $J \in \cS$ (\ref{Sposet})
\beq
 U^\pm \cap wP^\mp_J w^{-1}= U^\pm_{w_J}
\label{uwstabsgen}
\eeq
\nod where $w_J$ is the longest word in $wW_J$.

The groups $U_w$ also provide  improved Bruhat decompositions because
\beq
B^{\pm}wB^{\pm} = U_w^{\pm}wB^{\pm}
\nonumber
\eeq
\nod where expression on the left hand side factors \emph{uniquely} for a fixed choice of $w \in nT \in W$, cf. \cite[Lemma 8.52]{Buildings}, i.e.
\beq
G = \coprod_{w\in W} U_w^{+}wB^{+} = \coprod_{w\in W} U_w^{-}wB^{-}
\label{rgdbruhatdecomp}
\eeq
\nod so that all $g \in G$ factor uniquely as $g=uwb$ where $u\in U^\pm_w$, $b \in B^\pm$ and $w$ is an element in any fixed choice of coset representatives. Parabolic subgroups inherit similar expressions.

As previously mentioned, all groups with RGD systems have the structure of a twin $BN$--pair. This structure can be used to define
Levi component subgroups
\beq
G_I = P_I^- \cap P_I^+.
\label{levidef}
\eeq
The $G_I$ inherit a root group data structure $(G_I,\{ U_\alpha \}_{\alpha \in \Phi_I}, T)$ where $\Phi_I:=\{\alpha \in \Phi | U_\alpha \cap G_I \neq \{ e \} \}$.

\nod  When $I$ has finite type \cite{CR}, this leads to the semi-direct product decomposition
\beq
P_I^\pm \cong G_I \ltimes U_I^\pm
\label{levidecomp}
\eeq
 \nod known as the Levi decomposition with
 \beq
 G_I=\langle T, \hat{U}_I^{+}, \hat{U}_I^{+} \rangle
 \nonumber
 \eeq \nod for $\hat{U}_I^\pm = U^\pm \cap G_I$ called Levi component subgroups.
 Most of our principal applications will only require (\ref{levidecomp}) for finite type $I$.
For Kac-Moody groups,  (\ref{levidecomp}) holds for all $I$ and the submatrix $A_I:=(a_{ij})_{i,j \in I}$ of $A$ determines a Weyl group  $W_J$ group for $G_I$ via (\ref{kmweyl}).
 Moreover, $G_I$ is a reductive group exactly when $|W_I|< \ff$ \cite{kumar}.
See \cite{CR} for further discussion of when this decomposition is known to exist for general RGD systems.

We close the subsection with a simple but important lemma used in the proof of Theorem \ref{hocoUnewthmslick}.

\begin{lem}
For $P_J^-$ of finite type
\beq
U_{w P_J^- } = U_{w_J}
\eeq
\nod where $U_{w P_J^- }$ is the stabilizer of $\{ w P_J^- \}$ under the left multiplication action of $U^+$  on $G/P_J^-$, $w_J$ is the longest word in $wW_J$, 
and $U_w$ is defined in (\ref{uwdef}).
\label{stablem}
\end{lem}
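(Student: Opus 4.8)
The plan is to identify the stabiliser explicitly and compare it with $U_{w_J}$.

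An element $u \in U^+$ fixes the coset $wP_J^-$ under left multiplication precisely when $u \in U^+ \cap wP_J^- w^{-1}$, so $U_{w P_J^-} = U^+ \cap wP_J^- w^{-1}$. Since $w_J \in wW_J$ and every element of $W_J$ lifts to a representative in $N \cap P_J^- \le P_J^-$, the cosets $wP_J^-$ and $w_J P_J^-$ coincide; hence $U_{w P_J^-} = U^+ \cap w_J P_J^- w_J^{-1}$, and it remains to prove
\[
U^+ \cap w_J P_J^- w_J^{-1} = U_{w_J}.
\]
One inclusion is immediate from (\ref{uwstabs1}) together with $B^- \le P_J^-$: indeed $U_{w_J} = U^+ \cap w_J B^- w_J^{-1} \subseteq U^+ \cap w_J P_J^- w_J^{-1}$.

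For the reverse inclusion I would use the hypothesis that $J$ is of finite type, so that the Levi decomposition (\ref{levidecomp}) $P_J^- \cong G_J \ltimes U_J^-$ is available with $G_J$ reductive. The combinatorial heart is the behaviour of $w_J$: being the longest element of $wW_J$ is equivalent to $w_J\alpha_s \in \Phi^-$ for all $s \in J$, which forces $w_J(\Phi_J^+) \subseteq \Phi^-$ and hence, dually, $w_J(\Phi_J^-) \subseteq \Phi^+ \cap w_J\Phi^- = \Theta_{w_J}$. On the level of subgroups this says that conjugation by $w_J$ carries the ``positive'' Borel $B^+ \cap G_J$ of $G_J$ into $B^-$ and carries $U^- \cap G_J$ into $U_{w_J}$. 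Now, given $u$ in the intersection, set $y = w_J^{-1} u w_J \in P_J^-$, write $y = \gamma n$ with $\gamma \in G_J$, $n \in U_J^-$ via (\ref{levidecomp}), and factor $\gamma = \mu v b$ by the unique Bruhat factorisation (\ref{rgdbruhatdecomp}) applied inside $G_J$, so that $v \in W_J$, $\mu \in U_v^-$, $b \in B^- \cap G_J$. Conjugating back by $w_J$,
\[
u = (w_J\mu w_J^{-1})\,(w_J v w_J^{-1})\,(w_J b w_J^{-1})\,(w_J n w_J^{-1}),
\]
where by the previous remark $w_J\mu w_J^{-1}$ and $w_J b w_J^{-1}$ lie in $T\,U_{w_J}$, while $w_J n w_J^{-1}$ lies in the subgroup generated by the $U_\alpha$ with $\alpha \in w_J(\Phi^- \setminus \Phi_J^-)$. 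Since $u \in U^+$ lies in the identity double coset of the Birkhoff decomposition $G = \coprod_{x} B^+ x B^-$, uniqueness in the twin Bruhat/Birkhoff decompositions (\ref{rgdbruhatdecomp})--(\ref{genbirkhoff}) forces the Weyl-group factor $w_J v w_J^{-1}$ to be trivial and the surviving $w_J n w_J^{-1}$ to involve only positive root subgroups; thus $u \in T\,U_{w_J} \cap U^+ = U_{w_J}$, as required.

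The main obstacle is precisely this last step: promoting the root-combinatorial picture --- in which the positive roots occurring in $w_J P_J^- w_J^{-1}$ are exactly $\Theta_{w_J}$ --- to the group-theoretic equality, since an intersection of subgroups generated by root subgroups need not be generated by the root subgroups attached to the intersection of the underlying root sets. One must therefore invoke the uniqueness statements underpinning (\ref{rgdbruhatdecomp})--(\ref{genbirkhoff}), which in this generality rest on the covering-space arguments of \cite{Buildings,CR} already used for (\ref{uwstabs1}). A more self-contained alternative is induction on $|W_J|$: the base case $W_J = \{e\}$ is exactly (\ref{uwstabs1}) with $w_J = w$, and the inductive step removes a single generator $s \in J$, expressing $P_J^-$ through $P_{J \setminus \{s\}}^-$--double cosets and feeding the longer Weyl elements that arise back into (\ref{uwstabs1}).
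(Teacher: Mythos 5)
Your proof is correct and follows essentially the same route as the paper's: both reduce the stabiliser to $U^+ \cap w_J P_J^- w_J^{-1}$, decompose $P_J^-$ into Bruhat cells $U_v^- v B^-$ indexed by $W_J$ (you obtain this via the Levi decomposition and the Bruhat decomposition of $G_J$, while the paper uses the inherited improved Bruhat decomposition of $P_J^-$ directly), observe that $w_J U_v^- w_J^{-1} \subseteq U^+$, eliminate all cells with $v \neq e$ by disjointness of the Birkhoff double cosets, and finish with (\ref{uwstabs1}). The ``main obstacle'' you flag at the end in fact evaporates: once $v=e$ you have $\mu = e$, so the whole element $w_J^{-1}uw_J = bn$ already lies in $B^-$ and hence $u \in U^+ \cap w_J B^- w_J^{-1} = U_{w_J}$ immediately, with no need to analyse $w_J n w_J^{-1}$ root group by root group.
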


\begin{proof}
 Note that $u w P_J^- = w P_J^-$ and $u \in U^+$ if and only if $u \in w P_J^-w^{-1} \cap U^+$.
Recall (\ref{rgdbruhatdecomp}) the  (improved) Bruhat decomposition for  $P_J^-$
\beq
P_J^- = \coprod_{v\in W_J} U_v^- v B^- .
\label{bruhatdecompJ}
\eeq
We compute
\beq
w P_J^-w^{-1} = w_J P_J^-{w_J}^{-1}  &=& \bigcup_{v\in W_J} w_J U_v^- v B^- {w_J}^{-1}  \nonumber \\ &=& \bigcup_{v\in W_J} (w_J  U_v^- {w_J}^{-1})  w_J v B^- {w_J}^{-1}.
\label{bruhatdecompeq1}
\eeq
\nod As $(w_J  U_v^- {w_J}^{-1}) \sset U^+$ for all $v \in W_J$ we have
\beq
w_J U_v^- v B^- {w_J}^{-1} \cap U^+ \cong   w_J v B^- {w_J}^{-1} \cap U^+.
\label{bruhatdecompeq2}
\nonumber
\eeq
Each $w_J v B^- {w_J}^{-1} \cap U^+$ isomorphic as a set to
\beq
w_J v B^-\cap U^+ {w}_J ~~ \sset ~~~ U^+ w_J v B^-\cap U^+ {w}_J B^-
\label{isobrik}
\eeq
\nod via right multiplication by $ {w_J}$.  Together (\ref{bruhatdecompJ}) and (\ref{isobrik}) imply
\beq
w_J v B^- {w_J}^{-1} \cap U^+ \neq \emptyset \Rightarrow v=e .
\label{bruhatdecompeq3}
\eeq
Combining (\ref{bruhatdecompeq1}--\ref{bruhatdecompeq3}) we have
\beq
U_{w P_J^- }= w P_J^-w^{-1} \cap U^+ &=& \bigcup_{v\in W_J} (w_J U_v^- v B^- {w_J}^{-1}\cap U^+) \nonumber \\&=&  w_J  U_e^- e B^- {w_J}^{-1} \cap U^+ \nonumber \\&=&w_J B^- {w_J}^{-1} \cap U^+= U_{w_J B^- } .\nonumber
\label{bruhatdecompeq4}
\eeq
Now, $(w_J B^- {w_J}^{-1} \cap U^+)= U_{w_J B^- }$  is known to be $U_{w_J}$ (\ref{uwstabs1}) via covering arguments that appear independently in \cite{Buildings} and \cite{CR}.
\end{proof}

\subsection{Pulling back homotopy colimits}
\label{sec:pbmain}

If $F:{\bf J} \rightarrow \cat{I}$ is a fixed functor, then we say $F$  {\bf pulls back homotopy colimits} if {\em for any} diagram of spaces $D:\cat{I} \rightarrow \cat{Spaces}$ the natural map
\beq
\xymatrix{
\hcl_{{\bf J}} DF \ar[rr]^-{\hcl_F} & & \hcl_{\cat{I}} D
}
\nonumber
\label{pbhoco}
\eeq
\nod is a weak homotopy  equivalence.
We will use Theorem \ref{coverholim} (see Section \ref{sec:subdia}) to pullback homotopy colimits over appropriate functors by assembling pullbacks over subcategories.
For  such a functor and any $i \in \cat{I}$ define the category $i \downarrow F$ as having objects
\beq
\Objects(i \downarrow F) = \{ (i \rightarrow i', j') | F(j')=i', i \rightarrow i' \in \Hom_\cat{I}\}
\nonumber
\label{underob}
\eeq
and morphisms
\beq &{}&
 \Hom_{i \downarrow F}((i \stackrel{g}{\rightarrow} i_1', j_1'), (i \stackrel{f}{\rightarrow} i_2', j_2)) \nonumber \\&=&
\{
( i_1' \stackrel{F(h)}{\rightarrow} i_2', j_1' \stackrel{h}{\rightarrow} j_2' |  F(h) g=f \in \Hom_\cat{I}, h \in \Hom_{\bf J})
\},
\label{undermor}
\nonumber
\eeq
\nod i.e.  morphisms are pairs of vertical arrow that fit into the following diagram
\beq
\xymatrix{
& i_1'= F(j_1') \ar[dd]_{F(h)} & &j_1' \ar[dd]^{h} \ar@{|->}[ll]\\
i \ar[dr]^{f} \ar[ur]^{g}& &     & \ar@{|=>}[ll]_F  \\
& i_2' = F(j_2')& &j_2' \ar@{|->}[ll]
}
\nonumber
\label{undermor2}
\eeq
\nod
  When $F$ is the identity on $\cat{I}$, we use the notation $i \downarrow \cat{I}$  and this definition reduces to that of an under category.

\begin{theoremoutside}[Pullback Criterion \cites{dk, hv}]
Let
$D:\cat{I} \rightarrow \cat{Spaces}$  be a diagram of spaces and $F:{\bf J} \rightarrow \cat{I}$ be a fixed functor, then the
following are equivalent:
\begin{itemize}
\item for all $D$ the canonical $\hcl_{{\bf J}} DF \stackrel{\thicksim}{\longrightarrow} \hcl_{\cat{I}} D$  is a weak equivalence,

\item for all $i \in \cat{I}$ the geometric realization of $i \downarrow F$ is weakly contractible.

\end{itemize}
\label{termin}
\end{theoremoutside}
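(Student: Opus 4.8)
The plan is to prove the two implications separately, both resting on one elementary identification: for any object $i\in\cat{I}$ the homotopy colimit of the discrete, set-valued diagram $\Hom_\cat{I}(i,F(-))\co\cat{J}\to\cat{Spaces}$ is canonically $|i\downarrow F|$. This is just the fact that the simplicial replacement of a set-valued functor is the nerve of its Grothendieck construction, together with the observation that the Grothendieck construction of $j\mapsto\Hom_\cat{I}(i,F(j))$ is precisely the category $i\downarrow F$ described above. Specialising $F=\mathrm{id}_\cat{I}$ gives $\hcl_\cat{I}\Hom_\cat{I}(i,-)\cong|i\downarrow\cat{I}|$, which is weakly contractible since $i\downarrow\cat{I}$ has the initial object $\mathrm{id}_i$; and under these identifications the canonical comparison map $\hcl_\cat{J}DF\to\hcl_\cat{I}D$ for the choice $D=\Hom_\cat{I}(i,-)$ becomes the map $|i\downarrow F|\to|i\downarrow\cat{I}|$ obtained by applying $F$ and forgetting the $\cat{J}$-coordinate.

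Given this, the implication ``canonical map is a weak equivalence for all $D$ $\Rightarrow$ all $|i\downarrow F|$ weakly contractible'' is immediate: fix $i$, feed the hypothesis the diagram $D=\Hom_\cat{I}(i,-)$, conclude that $|i\downarrow F|\to|i\downarrow\cat{I}|$ is a weak equivalence, and note that the target is weakly contractible; hence so is $|i\downarrow F|$, and $i$ was arbitrary.

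For the converse I would invoke the co-Yoneda (density) presentation $D(F(j))\simeq\int^{i\in\cat{I}}\Hom_\cat{I}(i,F(j))\times D(i)$ and the associativity of the two-sided bar construction. Commuting $\hcl_\cat{J}$ past the coend rewrites $\hcl_\cat{J}DF$ as the homotopy colimit of $D$ weighted by the functor $i\mapsto\hcl_\cat{J}\Hom_\cat{I}(i,F(-))=|i\downarrow F|$, with the weighted-colimit comparison map identified with the canonical one. By hypothesis this weight is objectwise weakly contractible, hence objectwise weakly equivalent to the constant weight $\ast$; since a weighted homotopy colimit depends only on the objectwise weak homotopy type of the weight, the weighted homotopy colimit collapses to $\hcl_\cat{I}D$ and the canonical map $\hcl_\cat{J}DF\rhe\hcl_\cat{I}D$ is an equivalence. (A more hands-on variant: the statement is additive and commutes with homotopy colimits in $D$; check it directly for free diagrams $D=\coprod_\alpha\Hom_\cat{I}(i_\alpha,-)$, where it reduces to the identities of the first paragraph, then resolve an arbitrary space-valued $D$ by a simplicial object of free set-valued diagrams and realise.)

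The main obstacle lies wholly in this last direction, and it is foundational rather than conceptual: one must put the bar construction --- or the free-diagram resolution --- on a cofibrant/proper footing so that $\hcl_\cat{J}(-\circ F)$ and $\hcl_\cat{I}(-)$ genuinely preserve the homotopy colimits and realisations in question, and so that the invariance of weighted homotopy colimits under objectwise weak equivalence of weights is legitimate. Working with $\cat{Spaces}=$ simplicial sets, as the paper permits, dispatches most of the cofibrancy bookkeeping; the various diagram-chase identifications of canonical maps with one another are then routine unwindings of definitions.
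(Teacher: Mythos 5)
The paper does not actually prove this statement---it is quoted, with attribution, from Dwyer--Kan and Hollender--Vogt---so there is no internal argument to compare yours against. What you have written is precisely the standard proof from those sources: the forward direction by testing the hypothesis on the corepresentable discrete diagrams $D=\Hom_\cat{I}(i,-)$, whose homotopy colimit is the nerve of the transport/Grothendieck construction (exactly the identification the paper records after Proposition \ref{transprop}), so that the canonical map becomes $|i\downarrow F|\to|i\downarrow \cat{I}|\simeq \pt$; and the converse via associativity of the two-sided bar construction, exhibiting $\hcl_{\cat{J}}DF$ as the bar construction $B(Y,\cat{I},D)$ with contravariant weight $Y(i)=|i\downarrow F|$ and comparing it to $B(\ast,\cat{I},D)=\hcl_\cat{I}D$. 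Both halves are correct, and you have rightly isolated the only genuinely technical point, namely the cofibrancy/realization input needed to conclude that an objectwise weakly contractible weight may be replaced by the constant weight $\ast$; in the simplicial setting the paper allows, this is the standard levelwise-to-diagonal argument for bisimplicial sets.
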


In the applications of this paper,  ${\bf J}$ and $\cat{I}$ will be posets so that
$i \downarrow F= F^{-1}(i \downarrow \cat{I})$.

\subsection{Homotopy colimits over subdiagrams}
\label{sec:subdia}

At various times in this paper, we will wish to represent homotopy colimit presentations of a space in terms of homotopy colimits over subdiagrams of the main diagram.  This subsection will present tools to do this.

 An explicit model for the homotopy colimit of a small diagram $D: \cat{C} \rightarrow {\bf Top}$ of (topological) spaces $\hcl_\cat{C} D$ may be given as the union of
\beq
X_n=\frac{\coprod_{c \rightarrow c_1 \ldots \rightarrow c_{k\le n} \in \cat{C}} F(c) \times \Delta^k }{ \sim }
\label{hocolimit}
\eeq
\nod where $\Delta^k$ is the $k$--simplex and the relations correspond to the composition in $\cat{C}$ \cite{BK}.
 We call this the {\bf standard model} of a homotopy colimit.  Define the {\bf geometric realization} or classifying space of a category as
$
|\cat{C}|\simeq \hcl_\cat{C} \{\ast\}
 \label{nerve}
 $
 \nod for $\{\ast\}$ the one point space.

 We will also define a cover of a category $\cat{C}$ by subcategories $\{\cat{C}_i\}_{i \in I}$. 
 \begin{defn}
A cover of a small category $\cat{C}$ is a collection of subcategories $\{\cat{C}_i\}_{i \in I}$ such that, taking standard models (\ref{hocolimit}),
$\{|\cat{C}_i|\}_{i \in I}$ covers $|\cat{C}|$  (\ref{nerve}).
\label{catcover}
\end{defn}
\nod
We may also choose to enrich a cover by giving $I$ the structure of a poset such that $i\mapsto \cat{C}_i$ is a (commutative) diagram of inclusions of small categories.  Such a cover is given as a functor $\mathcal{U}: \cat{I} \rightarrow {\bf SubCat(C)}$ from a poset into the category subcategories of $\cat{C}$.  If there are no repetitions of $\cat{C}_i$'s, then $I$ may be canonically given
the poset structure induced by inclusion of subcategories.

 We will be interested in the taking homotopy colimits over subdiagrams
 \beq
  D_i= D|_{\cat{C}_i} : \cat{C}_i \longrightarrow {\bf Top} ~~~~
  \nonumber
  \eeq
  \nod and one of our main computational tools is provided by the following theorem.

  \begin{thm}
Let $D:  \cat{C} \rightarrow {\bf Top}$ be a diagram of topological spaces 
and  $\mathcal{U}: \cat{I} \rightarrow {\bf SubCat(C)}$ be a cover $\{ \cat{C}_i \}_{i\in \cat{I}}$ of $\cat{C}$ such that
\begin{itemize}
\item $\cat{I}$ has all greatest lower bounds and
\item $\mathcal{U}(\inf_A\{ \cat{C}_a \})= \cap_A \mathcal{U}(\cat{C}_a )$
\end{itemize}
\nod for all index sets $A$.  Then, $\hcl_{\cat{I}}(\hcl_{\cat{C}_i} D_i)$ and $\hcl_{\cat{C}} D$ are canonically, weakly equivalent.
\label{coverholim}
\end{thm}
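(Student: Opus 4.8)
The plan is to reduce the statement to the Pullback Criterion (Theorem \ref{termin}) applied to a suitable Grothendieck-type construction. First I would form the category $\cat{C}'$ whose objects are pairs $(i, c)$ with $i \in \cat{I}$ and $c \in \mathcal{U}(\cat{C}_i) = \cat{C}_i$, and whose morphisms $(i,c) \to (i',c')$ consist of a relation $i \le i'$ in $\cat{I}$ together with a morphism $c \to c'$ in $\cat{C}$ that already lies in $\cat{C}_{i'}$ (this makes sense since $\cat{C}_i \subseteq \cat{C}_{i'}$ when $i \le i'$, so $c \to c'$ automatically lies in $\cat{C}_{i'}$). There is a projection $\pi \co \cat{C}' \to \cat{I}$ and a projection $\rho \co \cat{C}' \to \cat{C}$ sending $(i,c) \mapsto c$. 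The iterated homotopy colimit $\hcl_{\cat{I}}(\hcl_{\cat{C}_i} D_i)$ is, by the standard Fubini/cofinality formula for homotopy colimits of diagrams of diagrams (e.g. as in \cite{BK} or \cite{hv}), naturally weakly equivalent to $\hcl_{\cat{C}'} D\rho$. So it suffices to show that $\rho$ pulls back homotopy colimits, i.e. that $\hcl_{\cat{C}'} D\rho \rhe \hcl_{\cat{C}} D$.

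By the Pullback Criterion, this reduces to checking that for every $c \in \cat{C}$ the geometric realization of the comma category $c \downarrow \rho$ is weakly contractible. Unwinding the definitions, an object of $c \downarrow \rho$ is a morphism $c \to c'$ in $\cat{C}$ together with an index $i$ such that $c \to c'$ lies in $\cat{C}_i$ (equivalently such that the object $c'$ and the morphism both lie in $\cat{C}_i$); morphisms are the evident commuting pairs. The key combinatorial point, which is where the two hypotheses on the cover $\mathcal{U}$ enter, is that the full subposet of $\cat{I}$ on those $i$ with $c \in \cat{C}_i$ is closed under the greatest-lower-bound operation: if $c \in \cat{C}_a$ for all $a \in A$, then $c \in \cap_A \mathcal{U}(\cat{C}_a) = \mathcal{U}(\inf_A\{\cat{C}_a\})$, so $\inf_A$ of that index set again contains $c$ in its subcategory. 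Hence the poset $\{ i \in \cat{I} : c \in \cat{C}_i \}$ has a least element $i_0$ (namely the infimum of the whole set), and similarly for any morphism $c \to c'$ appearing in $c\downarrow\rho$ the set of indices $i$ with $c \to c'$ in $\cat{C}_i$ has a least element. I would then argue that $c \downarrow \rho$ admits a final (or initial) subcategory, namely the copy of $c \downarrow \cat{C}_{i_0}$ obtained by always choosing the minimal index $i_0$; more precisely the functor $(c \to c') \mapsto (c \to c', i_0(c\to c'))$ picking the minimal compatible index is left adjoint (or cofinal) to the projection $c\downarrow\rho \to c\downarrow(\rho \text{ restricted appropriately})$, and the fibers over each object are comma categories of $\cat{I}$ under a fixed element, which have initial objects and are therefore contractible. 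A Quillen Theorem A argument then shows $|c\downarrow\rho|$ is weakly contractible.

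I expect the main obstacle to be the bookkeeping in this last step: verifying carefully that "choosing the minimal compatible index" is functorial and that it really exhibits a cofinal subcategory, since morphisms in $c\downarrow\rho$ involve both a move in $\cat{C}$ and a move in $\cat{I}$, and one must check that minimal indices are compatible along morphisms (if $c \to c'$ forces index $\ge i_0'$ and $c \to c''$ factors through it, the minimal indices interact correctly). Once that is pinned down, contractibility of the relevant fibers is immediate because each fiber is an under-category in the poset $\cat{I}$, which has an initial object. An alternative, and perhaps cleaner, route is to avoid $\rho$ altogether and directly build a cover of the standard model (\ref{hocolimit}) of $\hcl_{\cat{C}} D$ by the sub-realizations $\hcl_{\cat{C}_i} D_i$, observe that Definition \ref{catcover} guarantees these cover, that the hypothesis $\mathcal{U}(\inf_A) = \cap_A \mathcal{U}$ guarantees their intersections are again of this form, and then invoke a homotopy-colimit-of-a-cover (Mayer–Vietoris / Bousfield–Kan) result; I would present whichever of the two is shorter, but the comma-category reduction above is the more robust approach and the one I would write up.
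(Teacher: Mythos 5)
Your proposal follows the paper's proof essentially verbatim: the paper likewise forms the Grothendieck construction $\cat{I}\ltimes\mathcal{U}$, invokes Thomason's theorem to identify the iterated homotopy colimit with the homotopy colimit of $D$ pulled back along the projection $P\co \cat{I}\ltimes\mathcal{U}\rightarrow \cat{C}$, and then applies the Pullback Criterion (Theorem \ref{termin}) to $P$. The only difference is that your last step is more elaborate than needed — the greatest-lower-bound hypotheses show directly that $c\downarrow P$ has an \emph{initial object}, namely $(c\stackrel{id}{\rightarrow}c,(i(c),c))$ with $i(c)$ the infimum of all indices whose subcategory contains $c$, so the cofinality/Quillen Theorem A bookkeeping you anticipate is unnecessary.
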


By Thomason's Theorem \cite[Theorem 1.2]{thomason}, Theorem \ref{coverholim} may be reduced to the following.

  \begin{prop}
Let $D:  \cat{C} \rightarrow \cat{Spaces}$ be a diagram of spaces (e.g., topological spaces, $CW$--complexes, or simplicial sets)
and  $\mathcal{U}: \cat{I} \rightarrow {\bf SubCat(C)}$ be a cover $\{ \cat{C}_i \}_{i\in \cat{I}}$ of $\cat{C}$ such that
\begin{itemize}
\item $\cat{I}$ has all greatest lower bounds and
\item $\mathcal{U}(\inf_A\{ \cat{C}_a \})= \cap_A \mathcal{U}(\cat{C}_a )$
\end{itemize}
\nod for all index sets $A$.  Then, the canonical projection of the Groth\-en\-dieck construction (\ref{gr1}) 
of $\mathcal{U}$ onto  $\cat{C}$ pulls back homotopy colimits.
\label{coverholimprop}
\end{prop}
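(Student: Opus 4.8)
The plan is to identify the Grothendieck construction $\int_{\cat I}\mathcal U$ explicitly and then apply the Pullback Criterion (Theorem \ref{termin}): it suffices to show that for every object $c\in\cat C$ the geometric realization of the undercategory $c\downarrow \pi$ is weakly contractible, where $\pi\colon \int_{\cat I}\mathcal U\to \cat C$ is the canonical projection. First I would spell out objects and morphisms. An object of $\int_{\cat I}\mathcal U$ is a pair $(i,x)$ with $i\in\cat I$ and $x$ an object of $\cat C_i=\mathcal U(i)$; the projection sends $(i,x)$ to $x$. Since $\cat I$ is a poset, a morphism $(i,x)\to(i',x')$ exists iff $i\le i'$ (so $\cat C_i\subseteq\cat C_{i'}$) together with a morphism $x\to x'$ in $\cat C_{i'}$. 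Consequently, unwinding the definition of the undercategory for a functor (the diagram just before Theorem \ref{termin}), an object of $c\downarrow\pi$ is a triple consisting of $i\in\cat I$ with $c$ an object of $\cat C_i$ — equivalently $c\in\mathcal U(i)$ — together with a morphism $c\to x$ landing in $\cat C_i$; but because $\cat C_i$ is a \emph{subcategory} and $\mathcal U$ is a cover by inclusions, the cleanest model is to take $F$ to be the identity-on-objects description, so $c\downarrow\pi$ retracts onto the full subposet $\cat I_c:=\{\, i\in\cat I : c\in\Objects(\cat C_i)\,\}$. I would make this retraction precise by the standard argument that an undercategory of this shape deformation-retracts onto its "initial" part (the morphisms $c\xrightarrow{=}c$), using that the identity on $c$ is initial in $c\downarrow\cat C_i$ for each fixed $i$.

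Having reduced to showing $|\cat I_c|$ is contractible, the key point is the two hypotheses on $\mathcal U$: $\cat I$ has all greatest lower bounds, and $\mathcal U$ commutes with arbitrary infima, $\mathcal U(\inf_A\{\cat C_a\})=\bigcap_A\mathcal U(\cat C_a)$. From these, $\cat I_c$ is closed under arbitrary infima in $\cat I$: if $c\in\Objects(\mathcal U(a))$ for all $a\in A$, then $c\in\bigcap_A\mathcal U(\cat C_a)=\mathcal U(\inf_A\{\cat C_a\})$, so $\inf_A\{a\}\in\cat I_c$. In particular $\cat I_c$ is nonempty (take $A$ to be all of $\cat I_c$, which is nonempty since $c$ lies in $|\cat C|=\bigcup_i|\cat C_i|$ by the covering hypothesis, hence in some $\cat C_i$) and has a least element $m_c:=\inf\cat I_c$. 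A poset with a least element has contractible nerve via the cone on the constant functor at $m_c$ (equivalently $m_c$ is initial, so $|\cat I_c|$ is contractible). This gives weak contractibility of $|c\downarrow\pi|$ for every $c$, and Theorem \ref{termin} then yields that $\pi$ pulls back homotopy colimits.

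The step I expect to be the main obstacle is the bookkeeping in the first paragraph: verifying carefully that the undercategory $c\downarrow\pi$ — whose objects involve honest morphisms $c\to x$ in $\cat C_i$ rather than just the index $i$ — really does deformation-retract onto the discrete-looking poset $\cat I_c$, and that this retraction is compatible with the $\cat I$-structure so that contractibility of $|\cat I_c|$ transfers. The cleanest route is to factor $c\downarrow\pi\to\cat I_c$ as the composite $c\downarrow\pi\to\cat I_c\downarrow\cat I_c$-type projection followed by the terminal-object collapse, checking at each stage that the relevant comma categories have initial objects (the identity of $c$, respectively $m_c$); I would invoke the standard fact that a functor with initial objects in all undercategories is left cofinal, hence induces equivalences on nerves. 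The covering hypothesis of Definition \ref{catcover} enters exactly to guarantee $\cat I_c\ne\emptyset$, and the infimum-compatibility of $\mathcal U$ enters exactly to produce the least element $m_c$; everything else is formal poset nerve theory. Finally, Theorem \ref{coverholim} follows from Proposition \ref{coverholimprop} by Thomason's theorem as the excerpt already indicates, so no further work is needed there.
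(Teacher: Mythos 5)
Your proposal is correct and follows essentially the same route as the paper: apply the Pullback Criterion to the projection of the Grothendieck construction, and use the two infimum hypotheses to produce a least index $i(c)=\inf\{i : c\in\Objects(\cat{C}_i)\}$ forcing $|c\downarrow P|\simeq\pt$. The paper is just slightly more direct, exhibiting $(c\xrightarrow{\mathrm{id}}c,(i(c),c))$ as an initial object of $c\downarrow P$ outright rather than first retracting onto the subposet $\cat{I}_c$ and then collapsing to its least element.
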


\begin{proof}
Recall that the Grothendieck construction 
of $\mathcal{U}$, which we will denote as $\cat{I}\ltimes \mathcal{U}$, is given by
\beq
\Objects(\cat{I} \ltimes \mathcal{U}) &=& \{ (i,c)| i \in \Objects(\cat{I}), ~~~ c \in \Objects(\cat{C}_i)\} , \\
 \Hom_{\cat{I} \ltimes \mathcal{U}}( (i,c),  (i' , c') ) &=& \{ (f,g)| f \in \Hom_{\cat{I}}(i, i'), ~~~ g \in \Hom_{C_{i'}}(c, c')\} \nonumber
 \label{gr1}
 \label{gr2}
\eeq
\nod with composition given by pushing forward along $\mathcal{U}$, i.e. $(f_2,g_2) \circ (f_1,g_1)= (f_2 f_1, g_2 \mathcal{U}(g_1) )$.
The projection $P: \cat{I} \ltimes \mathcal{U} \rightarrow \cat{C}$ maps $(i,c)$ to $c$.
By Theorem \ref{termin}, it is sufficient to  show that $|c \downarrow P|\simeq \pt $.
Let  $i(c)$ be the greatest lower bound for all categories in the cover containing $c$,
then $c \downarrow P$  has initial object $(c \stackrel{id}{\rightarrow}c, (i(c), c))$. 
\end{proof}

Dugger and Isaksen \cite{cech} present a model, up to weak equivalence, of a general topological space $X$ in terms of an arbitrary open cover ${\bf U}=\{ U_i\}_{i\in \cat{I}}$ and {\em finite} intersections of these $U_i$.
This allows a version of Theorem \ref{coverholim} to verified depending only on finite greatest lower bounds, but this version will not be needed here and is less natural in the sense that it relies on the standard weak equivalences generating
the model category structure.

Now, Theorem \ref{coverholim} allows the second criterion of Theorem \ref{termin} to be verified locally.
\begin{prop}
Let $F:\cat{J} \rightarrow \cat{I}$ be a map of posets such that $\cat{I}$ has all greatest lower bounds. If $F$ restricted to the pullback of all closed intervals in $\cat{I}$ pull backs homotopy colimits, then $F$
pull backs homotopy colimits.
\label{localprop}
\end{prop}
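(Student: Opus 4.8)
The plan is to verify the geometric criterion in Theorem~\ref{termin}: it suffices to show that $|i\downarrow F|$ is weakly contractible for every $i\in\cat{I}$. Since $\cat{J}$ and $\cat{I}$ are posets, $i\downarrow F$ is identified with the subposet $F^{-1}(i\downarrow\cat{I})$, where $i\downarrow\cat{I}=\{\,i'\in\cat{I}\mid i\le i'\,\}$ has $i$ as an initial object. The idea is to organize $F^{-1}(i\downarrow\cat{I})$ by the cover given by preimages of closed intervals with lower endpoint $i$, apply Theorem~\ref{coverholim}, and then contract everything using the local hypothesis.

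First, for $b\ge i$ set $\cat{C}_b:=F^{-1}([i,b])$. The assignment $[i,b]\mapsto\cat{C}_b$ is a diagram of inclusions of subposets of $F^{-1}(i\downarrow\cat{I})$ indexed by the poset $\cat{I}_i:=\{\,[i,b]\mid b\ge i\,\}$, which is isomorphic to $i\downarrow\cat{I}$ under $b\mapsto[i,b]$. As $i$ is a lower bound for any family $\{b_\lambda\}$, its infimum (which exists in $\cat{I}$ by hypothesis) again lies in $i\downarrow\cat{I}$, so $\cat{I}_i$ has all greatest lower bounds; since $F^{-1}\big([i,\inf_\lambda b_\lambda]\big)=\bigcap_\lambda F^{-1}([i,b_\lambda])$, the compatibility $\mathcal{U}(\inf)=\bigcap\mathcal{U}$ holds. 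Finally the $\cat{C}_b$ form a cover in the sense of Definition~\ref{catcover}: a chain $j_0<\cdots<j_k$ in $F^{-1}(i\downarrow\cat{I})$ maps under $F$ into $[i,F(j_k)]$, so its simplex lies in $|\cat{C}_{F(j_k)}|$. Theorem~\ref{coverholim}, applied to the constant diagram $\pt$ on $F^{-1}(i\downarrow\cat{I})$, then supplies a weak equivalence
\[
|i\downarrow F|\;\simeq\;\hcl_{F^{-1}(i\downarrow\cat{I})}\pt\;\simeq\;\hcl_{b\ge i}\,\big|\,F^{-1}([i,b])\,\big|.
\]

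Next I would contract the right-hand diagram. By hypothesis each restriction $F^{-1}([i,b])\to[i,b]$ pulls back homotopy colimits; evaluating its canonical comparison map on the constant diagram $\pt$ gives a weak equivalence $\big|F^{-1}([i,b])\big|\rhe\big|[i,b]\big|$, and $\big|[i,b]\big|\simeq\ast$ because $i$ is initial in $[i,b]$. These maps are natural in $b$ (all the relevant squares of inclusions commute, being restrictions of $F$), so $b\mapsto\big|F^{-1}([i,b])\big|$ is objectwise weakly equivalent, over $\cat{I}_i$, to the constant point-valued diagram. Hence
\[
\hcl_{b\ge i}\,\big|F^{-1}([i,b])\big|\;\simeq\;\hcl_{b\ge i}\pt\;\simeq\;|\cat{I}_i|\;\simeq\;\ast,
\]
the last equivalence because $\cat{I}_i\cong i\downarrow\cat{I}$ has an initial object. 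Combining the two displays gives $|i\downarrow F|\simeq\ast$ for all $i$, and Theorem~\ref{termin} concludes that $F$ pulls back homotopy colimits.

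I expect the only genuine work to be bookkeeping around the cover: verifying Definition~\ref{catcover} and, crucially, the intersection hypothesis of Theorem~\ref{coverholim} for \emph{arbitrary} (not merely finite) index sets---this is exactly where one uses that $\cat{I}$ has all greatest lower bounds and that fixing the lower endpoint at $i$ forces only infima, never suprema, of endpoints to appear---and in checking that the two appeals to ``pulls back homotopy colimits'' (locally over each $[i,b]$, and globally through Theorem~\ref{coverholim}) are made with mutually compatible canonical maps, so that the equivalences above are natural in $b$. The remaining manipulations of homotopy colimits are formal.
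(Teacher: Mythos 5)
Your proof is correct and follows essentially the same route as the paper: cover $i\downarrow\cat{I}$ by the closed intervals $[i,b]$, pull the cover back along $F$, note each piece $F^{-1}([i,b])$ is contractible by the local hypothesis applied to the constant point diagram, and apply Theorem~\ref{coverholim} together with the fact that $i\downarrow\cat{I}$ has an initial object. The only difference is that you spell out the cover and intersection verifications that the paper leaves implicit.
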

\begin{proof}
 A closed interval ${\bf [ }i_1 ,i_2{\bf ] }$ is defined to be the full subcategory of $\cat{I}$ with object set $\{ i| i_1 \rightarrow i \rightarrow i_2 \in \cat{I} \}$.  The category $i \downarrow \cat{I}$ is covered by $\{{\bf [ }i ,i'{\bf ] }\}_{ \exists i \rightarrow i'}$ and for any  set of objects $A$ the intersection
$\cap_{i'\in A} {\bf [ }i ,i'{\bf ] }= {\bf [ }i ,\inf_{\cat{I}} A{\bf ] } $.  Since covers and intersections pullback over functors, $\{F^{-1}{\bf [ }i ,i'{\bf ] }\}_{ \exists i \rightarrow i'}$ covers $\cat{J}$ and is closed under intersection.
  By hypothesis, $|F^{-1}{\bf [ }i ,i'{\bf ] }|\simeq \{\ast \}$. Theorem \ref{coverholim} implies  $|i \downarrow F= F^{-1}(i \downarrow \cat{I})|$ and $|i \downarrow \cat{I}|$
  have the same weak homotopy type. The result follows as $i \downarrow \cat{I}$ has an initial object, namely the identity on $i$.
\end{proof}

\subsection{Transport categories}
\label{sec:trans}
Transport categories are a specialization of the Groth\-en\-dieck construction \cite{thomason} for a small diagram of small categories.
\begin{defn}
Given a (small) diagram of sets $X: \cat{C} \rightarrow {\bf Sets}$, the transport category of $X$, denoted $\cat{Tr}(X)$ has
\beq
\Objects (\cat{Tr}(X)) &=& \{ (c,x_c) | c \in \cat{C},  x_c \in X(c) \}  \nonumber \\
\Hom_{\cat{Tr}(X)}((c,x_c), (c',y_{c'})) &=&  \{ f \in \Hom_{\cat{C}}(c,c')| X(f)(x_c)= y_{c'} \}
\nonumber
\label{transdefeq}
\eeq
\nod with composition induced by $\cat{C}$.
\label{transdef}
\end{defn}
For any Coxeter group $W$ with $\cS$ the set of subsets of generators that generate finite groups (\ref{Sposet}), our fundamental example of a transport category is $\cat{Tr}(X)$ for $X: \cS \rightarrow \cat{Sets}$ defined via $I \mapsto W/W_I$.
We call this category $\cat{W}_\cS$. Explicitly, $\cat{W}_\cS$ is a poset whose elements are finite type cosets and there exists a (unique) morphism from $wW_I$ to $vW_J$ precisely when $wW_I \sset vW_J$.
Its geometric realization $|\cat{W}_\cS|$ is the Davis complex \cite{Davis}.

We also wish to record a proposition  which is a specialization and enrichment of Thomason's  Theorem \cite[Thereom 1.2]{thomason}.  We indicate an explicit proof.
\begin{prop}
For any diagram of spaces over a transport category $D:\cat{Tr}(X)\rightarrow \cat{Spaces}$, there is a diagram of spaces $D':\cat{C}\rightarrow \cat{Spaces}$ over the underlying category such that
$\hcl_{\cat{Tr}(X)} D$ and $\hcl_\cat{C} D'$ are canonically weakly equivalent.
\label{transprop}
\end{prop}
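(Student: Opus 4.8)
The plan is to explicitly build the diagram $D'$ over $\cat{C}$ by a left Kan extension / "fiberwise homotopy colimit" construction and then verify the claimed weak equivalence using the Pullback Criterion (Theorem \ref{termin}). First I would recall that $\cat{Tr}(X)$ is precisely the Grothendieck construction $\cat{C} \ltimes X$, where $X$ is viewed as a diagram of discrete categories; the projection functor $P \co \cat{Tr}(X) \to \cat{C}$ sends $(c,x_c)$ to $c$. The natural candidate for $D'$ is the homotopy left Kan extension of $D$ along $P$, i.e.\ on objects
\[
D'(c) = \hcl_{\, c \downarrow P}\; D \circ \pi_c,
\]
where $\pi_c \co (c\downarrow P) \to \cat{Tr}(X)$ is the forgetful functor $(c \to c', (c',x_{c'})) \mapsto (c',x_{c'})$, with functoriality in $c$ induced by precomposition along the maps $c_2 \downarrow P \to c_1 \downarrow P$ coming from morphisms $c_1 \to c_2$ in $\cat{C}$. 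This is a standard model, and it comes equipped with a canonical comparison map $\hcl_{\cat{Tr}(X)} D \to \hcl_{\cat{C}} D'$.

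Next I would identify $c \downarrow P$ concretely. Since $X$ takes values in discrete categories, an object of $c \downarrow P$ is a pair consisting of a morphism $g\co c \to c'$ in $\cat{C}$ together with an element $x_{c'} \in X(c')$, and a morphism to $(c \xrightarrow{f} c'', x_{c''})$ is a morphism $h \co c' \to c''$ in $\cat{C}$ with $hg = f$ and $X(h)(x_{c'}) = x_{c''}$. In other words $c \downarrow P$ is itself a transport-type category: it is the Grothendieck construction over $c\downarrow \cat{C}$ of the diagram $(g\co c\to c') \mapsto X(c')$. This makes the functoriality in $c$ transparent. With $D'$ defined this way, the composite $\hcl_{\cat{C}} D' = \hcl_{\cat{C}}\bigl(c \mapsto \hcl_{c\downarrow P} D\pi_c\bigr)$ is, by the standard Fubini/cofinality manipulations for homotopy colimits over a Grothendieck construction (Thomason \cite{thomason}), canonically weakly equivalent to $\hcl_{\cat{Tr}(X)} D$. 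If one prefers to avoid quoting Thomason in this exact form, the alternative is to invoke Theorem \ref{termin}: the comparison map $\hcl_{\cat{Tr}(X)} D \to \hcl_{\cat{C}} D'$ is a weak equivalence because $D'$ is built as the homotopy left Kan extension along $P$, and homotopy colimit is homotopy-invariant, so pushing forward and then taking $\hcl$ recovers the original.

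The main obstacle—really the only point requiring care—is the naturality and well-definedness of the assignment $c \mapsto D'(c)$: one must check that for $c_1 \xrightarrow{f} c_2$ the induced functor $c_2 \downarrow P \to c_1 \downarrow P$ is strictly functorial (so that we genuinely get a diagram over $\cat{C}$, not merely a pseudo-diagram) and that it is compatible with $D\pi_{(-)}$ so that we get an honest map of standard-model homotopy colimits. Because everything is built from strict Grothendieck constructions of set-valued diagrams, strict functoriality does hold, but verifying it is the bookkeeping heart of the argument; I would spell out the composition $(f_2,g_2)\circ(f_1,g_1) = (f_2f_1,\, g_2\, \mathcal U(g_1))$-style formula (as in the proof of Proposition \ref{coverholimprop}) to confirm associativity. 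Once that is in place, the weak equivalence $\hcl_{\cat{Tr}(X)} D \simeq \hcl_{\cat{C}} D'$ follows formally, and the "canonical" in the statement refers precisely to the left-Kan-extension comparison map.
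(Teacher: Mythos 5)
Your overall strategy---push $D$ forward along the projection $P\co \cat{Tr}(X)\to\cat{C}$ by a fiberwise homotopy colimit and then invoke a Fubini-type statement---is in spirit what the paper does, but your construction of $D'$ uses the comma category on the wrong side, and this is not mere bookkeeping: it breaks the variance. With the paper's conventions, an object of $c\downarrow P$ is a pair $(c\to c',\,x_{c'})$, and a morphism $c_1\to c_2$ of $\cat{C}$ induces a functor $c_2\downarrow P\to c_1\downarrow P$ by precomposition, exactly as you say; but then $c\mapsto \hcl_{\,c\downarrow P}\,D\pi_c$ is a functor $\cat{C}^{op}\to\cat{Spaces}$, not a diagram over $\cat{C}$, so it cannot serve as the $D'$ the proposition asks for. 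The homotopy \emph{left} Kan extension, which is what you want, is $c\mapsto \hcl_{\,P\downarrow c}\,D$, built from the over-categories whose objects are pairs $((c',x_{c'}),\,c'\to c)$; these are covariant in $c$ via postcomposition, and the identity $\hcl_{\cat{C}}\,\hcl_{\,P\downarrow(-)}D\simeq \hcl_{\cat{Tr}(X)}D$ is the standard Fubini statement. Your appeal to Theorem \ref{termin} at the end does not repair this: that criterion compares $\hcl_{{\bf J}}\,DF$ with $\hcl_{\cat{I}}\,D$ for a fixed functor $F$, which is not the shape of the comparison you need here.

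Once the variance is fixed your argument does go through, but it is heavier than necessary and lands in a less explicit place than the paper. Because $P$ is a split opfibration with \emph{discrete} fibers, the fiber inclusion $X(c)=P^{-1}(c)\hookrightarrow P\downarrow c$ is homotopy cofinal (the pushforward $((c',x),f)\mapsto X(f)(x)$ is left adjoint to it), so the homotopy left Kan extension collapses to the elementary formula the paper takes as its definition: $D'(c)=\hcl_{X(c)}D(c,x_c)=\coprod_{x_c\in X(c)}D(c,x_c)$, with $D'(f)$ carrying the $x_c$ summand into the $X(f)(x_c)$ summand via $D(f)$. With that definition the equivalence $\hcl_{\cat{Tr}(X)}D\simeq\hcl_{\cat{C}}D'$ is verified directly on standard models, with no Kan extension machinery at all. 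I would either adopt that direct construction, or keep the Kan extension route but correct the comma category and add the cofinality step identifying $\hcl_{\,P\downarrow c}\,D$ with the disjoint union over the fiber.
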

\begin{proof} Let $D':\cat{C} \rightarrow \cat{Spaces}$ be defined by
$D'(c):= \hcl_{X(c)} D (c, x_c )$.  We have a natural weak equivalence
\beq
\hcl_{\cat{Tr}(X)} D' \stackrel{\sim}{\longrightarrow} \hcl_{\cat{C}} (\hcl_{X(c)} D|_{X(c)} )=\hcl_{\cat{C}} D'
\nonumber
\label{grotheq}
\eeq
\nod which can be observed by inspecting the standard models.  More explicitly, if one takes standard models the map (\ref{grotheq}) is specified
by the universal property of the homotopy colimit and the maps
\beq
D(c,x_c) \stackrel{D}{\longrightarrow} D(c',y_{c'}) \longrightarrow \coprod_{z_{c'} \in X(c')} D (c', z_{c'})
\nonumber
\eeq
\nod where the second map is the obvious inclusion.
\end{proof}

Note that Proposition \ref{transprop} gives a canonical equivalence between $\hcl_\cat{C} X$ and $|\cat{Tr}(X)|$.

\subsection{Coset geometry and colimits of groups}
\label{sec:cosetgeo}

Here we will see how homotopy colimit calculations in terms of subdiagrams apply to honest colimits. 

\begin{theoremoutside}[Seifert-van Kampen \cite{fun}]
Let $D: \cat{I} \rightarrow \cat{Spaces}_*$ be a diagram of pointed connected spaces 
such that $\cat{I}$ has an initial object.  Then, there is a natural isomorphism $\pi_1\hcl_\cat{I} D \cong \cl_\cat{I} \pi_1 D$.
\label{svkthm}
\end{theoremoutside}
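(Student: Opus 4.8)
The plan is to deduce the statement from a representability (co-Yoneda) argument, which treats an infinite indexing category $\cat{I}$ on the same footing as a finite one; a purely cellular van Kampen induction would only work once one knows $\cat{I}$ is finite. First I would record two preliminaries. Fix the initial object $i_0\in\cat{I}$ and the basepoint $\ast\in D(i_0)$. Since $|\cat{I}|\simeq\pt$, the pointed homotopy colimit agrees (up to weak equivalence) with the unpointed one based at the image of $\ast$; and since each $D(c)$ is connected while the unique arrow $i_0\to c$ contributes a $1$-simplex of the standard model \eqref{hocolimit} joining $D(i_0)$ to $D(c)$, the space $\hcl_{\cat{I}} D$ is path-connected, so $\pi_1\hcl_{\cat{I}} D$ is a well-defined group.

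The core of the argument is the chain of identifications, natural in an arbitrary discrete group $G$:
\beq
\Hom(\pi_1\hcl_{\cat{I}} D,\, G) &\cong& [\hcl_{\cat{I}} D,\, BG]_* \;\cong\; \pi_0\,\mathrm{Map}_*(\hcl_{\cat{I}} D,\, BG) \nonumber \\
&\cong& \pi_0\,\hl_{\cat{I}^{op}}\,\mathrm{Map}_*(D(-),\, BG) \;\cong\; \pi_0\,\hl_{\cat{I}^{op}}\,\Hom(\pi_1 D(-),\, G) \nonumber \\
&\cong& \lim_{\cat{I}^{op}}\Hom(\pi_1 D(-),\, G) \;\cong\; \Hom\big(\cl_{\cat{I}} \pi_1 D,\, G\big).\nonumber
\eeq
Here the first isomorphism uses $BG=K(G,1)$ together with the connectivity of $\hcl_{\cat{I}} D$; the third is the adjunction ``mapping out of a homotopy colimit is a homotopy limit'', applied to the standard model \eqref{hocolimit}; the fourth holds because $D(c)$ is connected, so $\mathrm{Map}_*(D(c),BG)$ depends only on the $2$-type $B\pi_1 D(c)$ and is homotopy discrete with $\pi_0=\Hom(\pi_1 D(c),G)$; the fifth is that $\pi_0$ of the homotopy limit of a diagram of discrete sets is the ordinary limit; and the last repackages a compatible family of homomorphisms $\pi_1 D(c)\to G$ as a single homomorphism out of $\cl_{\cat{I}}\pi_1 D$. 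Every step is natural in $G$, so Yoneda in the category of groups yields the asserted isomorphism $\pi_1\hcl_{\cat{I}} D\cong\cl_{\cat{I}}\pi_1 D$; chasing the maps shows it is induced by the canonical maps $\pi_1 D(c)\to\pi_1\hcl_{\cat{I}} D$, hence natural in $D$.

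The step I expect to carry the real weight is the fourth one --- that pointed maps from a connected space into an aspherical space see only $\pi_1$, i.e. that $\mathrm{Map}_*(X,K(G,1))$ is homotopy discrete for connected $X$ --- which one gets from obstruction theory (the restriction along the $2$-connected map $X\to B\pi_1 X$ is a weak equivalence because $K(G,1)$ has no homotopy above degree one). If one prefers to avoid mapping spaces, the alternative is cellular: filter the standard model \eqref{hocolimit} by skeleta $X_0\subseteq X_1\subseteq\cdots$, identify $X_1$ as $\coprod_c D(c)$ with the mapping cylinder of each non-identity $D(c\to c')$ attached and $X_2$ as the further attachment of the ``composition triangles'', note that $X_2\hookrightarrow\hcl_{\cat{I}} D$ induces an isomorphism on $\pi_0$ and $\pi_1$ (the later stages glue in pieces of the form $A\times D^{n}$ with $n\ge 3$ along $A\times S^{n-1}$), and compute $\pi_1 X_2$ by the van Kampen theorem for groupoids --- the groupoid formulation being precisely what accommodates the disconnected $X_0$ and a possibly infinite supply of pieces. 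In either form the ``initial object'' hypothesis is used only to make $\hcl_{\cat{I}} D$ connected and $|\cat{I}|$ contractible, which is what guarantees that the plain colimit of groups (rather than one corrected by $\pi_1|\cat{I}|$) is the right answer.
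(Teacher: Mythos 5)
The paper does not prove this statement at all---it is quoted verbatim from Dror Farjoun's \emph{Fundamental group of homotopy colimits} \cite{fun}---so there is no internal argument to measure you against; what you have written is a correct, self-contained proof of exactly the special case the paper needs. Your route differs from the cited source: Farjoun computes $\pi_1\hcl_{\cat{I}}D$ for an arbitrary index category by a van Kampen--type analysis of a fundamental groupoid attached to the diagram (essentially your ``cellular'' alternative, pushed through in general, with a correction term coming from $\pi_1|\cat{I}|$), and the clean formula $\pi_1\hcl_{\cat{I}}D\cong\cl_{\cat{I}}\pi_1 D$ drops out when the nerve is simply connected. Your representability argument buys a shorter proof of the special case and, usefully, isolates precisely where the hypothesis on $\cat{I}$ enters: the contractibility of $|\cat{I}|$ is what lets you replace the unpointed standard model \eqref{hocolimit} by the pointed homotopy colimit before invoking the adjunction $\mathrm{Map}_*(\hcl_* D,Y)\simeq\hl\,\mathrm{Map}_*(D(-),Y)$, and without that step the argument (correctly) fails, as it must since the constant diagram at $\pt$ over $\cat{I}=BG$ gives $\pi_1\hcl=G$ but $\cl\pi_1 D$ trivial. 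The individual links in your chain are all sound: $\mathrm{Map}_*(X,K(G,1))$ is indeed homotopy discrete with $\pi_0=\Hom(\pi_1X,G)$ for connected $X$ (your obstruction-theoretic justification via the $2$-connected map $X\to B\pi_1X$ is the right one), and $\pi_0$ of a homotopy limit of homotopy discrete fibrant spaces is the ordinary limit because the relevant $\lim^1\pi_1$ obstructions vanish. The only things to make explicit in a fully rigorous write-up are the usual cofibrancy/CW hypotheses needed for $[X,BG]_*\cong\Hom(\pi_1X,G)$ and for homotopy invariance of the (co)limits, and the final diagram chase identifying the abstract Yoneda isomorphism with the map induced by the inclusions $D(c)\to\hcl_{\cat{I}}D$; none of these is a gap.
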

  When $D: \cat{I} \rightarrow {\bf Groups}$ is  diagram of inclusions of subgroups of $H$, we  refer to the standard model of the homotopy fibre of  $\hcl_\cat{I} BD \rightarrow BH$ given by $\hcl_\cat{I} H/D(i)$ as the {\bf coset geometry} of the cocone. We call the transport category (see \ref{sec:trans}) for the functor $i \mapsto H/D(i)$ the
  {\bf poset form of the coset geometry} since its geometric realization is canonically equivalent to the coset geometry.
  This represents a mild generalization of a notion of coset geometry employed by others, such as Tits \cite{Titscover} and Caprace and Remy \cite{CR}.  For instance,  our notion permits diagrams whose image in the lattice
   of subgroups is not full and our poset is directed in the oppositive direction.

We can always add the trivial group to any diagram of inclusions of groups in an initial position  without affecting the colimit of that diagram. Thus, verifying any presentation of a group as a colimit of subgroups
reduces to homotopy knowledge of the coset geometry.  An example of this technique gives the following additional corollary.

\begin{cor}
Let $D: \cat{I} \rightarrow \cat{Groups}$ be a diagram of the subgroups of $H$ with each map an inclusion of subgroups such that $|\cat{I}|$ is simply connected.  The canonical $\cl_\cat{I} D \stackrel{\thicksim}{\longrightarrow} H$ is an isomorphism
if and only if its coset geometry is simply connected.
\label{simcon}
\end{cor}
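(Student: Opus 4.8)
The plan is to deduce the statement from the six–term exact sequence of the fibration whose total space is $\hcl_\cat{I}BD$, whose base is $BH$, and whose fibre is the coset geometry $C:=\hcl_\cat{I}H/D(i)$ described in \ref{sec:cosetgeo}. First I would record the background facts. Since $|\cat{I}|$ is connected and every $BD(i)$ is connected, $\hcl_\cat{I}BD$ is connected. The canonical cocone $\{BD(i)\to BH\}$ assembles to a map $\phi\co \hcl_\cat{I}BD\to BH$; because $D(i)\le H$ lets us take $BD(i)\simeq EH\times_H(H/D(i))$ and $EH\times_H(-)$ commutes with homotopy colimits, $\hcl_\cat{I}BD\simeq EH\times_H C$ over $BH$, so $\phi$ is (equivalent to) a bundle with fibre $C$, i.e.\ $C$ is its homotopy fibre — this is exactly the coset geometry of the cocone. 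Granting the identification $\pi_1\hcl_\cat{I}BD\cong\cl_\cat{I}D$ of the next paragraph, $\phi$ realises on $\pi_1$ the canonical homomorphism $c\co\cl_\cat{I}D\to H$.

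The main work is identifying $\pi_1\hcl_\cat{I}BD$ with $\cl_\cat{I}D$, and this is where simple connectivity of $|\cat{I}|$ enters, since $\cat{I}$ need not have an initial object. Following the technique of \ref{sec:cosetgeo}, I would adjoin a trivial group as a new initial object, obtaining $\cat{I}^+$ and the diagram $BD^+$ which agrees with $BD$ on $\cat{I}$ and is a point on the new object; then $\cl_{\cat{I}^+}D^+=\cl_\cat{I}D$. Inspecting the standard model (\ref{hocolimit}), the simplices of $\hcl_{\cat{I}^+}BD^+$ not already present in $\hcl_\cat{I}BD$ are exactly those of a cone on $|\cat{I}|$, attached along the canonical map $\iota\co|\cat{I}|=\hcl_\cat{I}\pt\to\hcl_\cat{I}BD$ induced by the basepoints; thus $\hcl_{\cat{I}^+}BD^+$ is, up to weak equivalence, the mapping cone of $\iota$. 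As $|\cat{I}|$ is simply connected, van Kampen for this cone attachment gives $\pi_1\hcl_\cat{I}BD\cong\pi_1\hcl_{\cat{I}^+}BD^+$, and the latter is $\cl_{\cat{I}^+}D^+=\cl_\cat{I}D$ by the Seifert--van Kampen theorem (Theorem~\ref{svkthm}), which now applies since $\cat{I}^+$ has an initial object. A diagram chase through $\hcl_\cat{I}BD\to\hcl_{\cat{I}^+}BD^+\to BH$ identifies $\phi_\ast$ with $c$.

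Then I would run the long exact homotopy sequence of $C\to\hcl_\cat{I}BD\stackrel{\phi}{\to}BH$. Since $\pi_n BH=0$ for $n\ge 2$, $\pi_1 BH=H$, and $\hcl_\cat{I}BD$ is connected, it collapses in low degrees to the exact sequence of groups and pointed sets
\[
0 \longrightarrow \pi_1 C \longrightarrow \cl_\cat{I}D \stackrel{c}{\longrightarrow} H \longrightarrow \pi_0 C \longrightarrow \pt .
\]
Exactness gives $\pi_1 C\cong\ker c$, while the connecting map $H\to\pi_0 C$ is onto (as $\hcl_\cat{I}BD$ is connected) with the preimage of the basepoint equal to $\mathrm{im}(c)$, so $\pi_0 C$ is a point precisely when $c$ is surjective. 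Hence $c$ is an isomorphism iff it is both injective and surjective iff $\pi_1 C=0$ and $\pi_0 C=\pt$, i.e.\ iff $C$ is simply connected; since $C$ is weakly equivalent to the coset geometry of the cocone, this is the claim.

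I expect the only genuine obstacle to be the $\pi_1$-identification of the middle paragraph: controlling the fundamental group of a homotopy colimit over a category without an initial object. The mapping-cone description of the effect of adjoining a trivial initial group, combined with $\pi_1|\cat{I}|=0$, is precisely what resolves it; the remainder is formal bookkeeping with the fibration sequence.
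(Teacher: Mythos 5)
Your proposal is correct and follows essentially the same route as the paper: adjoin a trivial initial object, recognise the new homotopy colimit as the cone on $|\cat{I}|$ attached to the old one, use Seifert--van Kampen together with $\pi_1|\cat{I}|=0$ to identify $\pi_1\hcl_\cat{I}BD$ with $\cl_\cat{I}D$, and finish with the long exact sequence of the fibration over $BH$. Your treatment of the $\pi_0$ end of the exact sequence is slightly more explicit than the paper's, but the argument is the same.
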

\begin{proof} We may artificially add an initial object, $\bullet$, to $\cat{I}$ and call this new category $\cat{I} \cup \bullet$. We can extend $D$ to $\cat{I} \cup \bullet$ by sending $\bullet$ to the trivial group.
Up to homotopy, the space $Y:=\hcl_{\cat{I} \cup \bullet} BD$ is obtained from $X:=\hcl_{\cat{I}} BD$ by coning off the subspace $Z:=\hcl_{\cat{I}} \{\ast\}$, the homotopy colimit base points.
Using Theorem \ref{svkthm}, the fundamental group $\pi_1 (\hcl_{\cat{I} \cup \bullet} BD) \cong  \cl_\cat{I} D$. Since $\pi_1(Z)=0$, $\cl_\cat{I}$ agrees with $ \pi_1 (\hcl_{\cat{I}} BD)$.
 By the long exact sequence of a fibration, the coset geometry of $D$ is simply connected if and only if the canonical $\cl_\cat{I} D \rightarrow H$ is an isomorphism.
\end{proof}

Compare \cite[Lemma 1.3.1]{DC} and \cite[\S3]{Hal}. For example, as observed in \cite{CR}, if $W$ is a Coxeter group with $\cS_2$ the set of subsets of generators of cardinality at most $2$ that generate finite groups,
then the colimit presentation $\cl_{\cS_2} W_I \cong W$, which is immediate from the presentation of $W$ (\ref{cox}), implies that the associated coset geometry is simply connected.

\begin{rem}
When the conditions for Corollary \ref{simcon} are satisfied,  a homotopy decomposition of a classifying space in terms of the classifying spaces of subgroups induces a colimit presentation.  Though not stated explicitly,  the homotopy decompositions of
Theorem \ref{hocoUnewthmslick}, Lemma \ref{hocoUnewthm}, and  Corollary \ref{Bruhatorderdecomphat} as well as the known homotopy decompositions (\ref{hocoBW}--\ref{hocoBG}) all induce colimit presentations.
Of course, a homotopy decomposition a strictly stronger than a colimit presentation. The former is characterized by having a contractible coset geometry whereas latter only
requires a  simply connected coset geometry.
For instance, $\cl_{\cS_2} W_I \cong W$ is induced by a homotopy decomposition of $BW$ only if $\cS_2 =\cS$.
\label{rem:cosetgeo}
\end{rem}

\subsection{Known homotopy decompositions}
\label{sec:known}

For any Coxeter group $W$ with $\cS$ (\ref{Sposet}) the poset of subsets of generators that generate finite groups
\beq
\hcl_{I\in \cS} BW_I \stackrel{\thicksim}{\longrightarrow} BW
\label{hocoBW}
\eeq
\nod is a homotopy equivalence including, trivially,  the cases where $W$ is finite. For non-trivial decompositions, the associated coset geometry is Davis's version of the Coxeter complex, $|\cat{W}_\cat{S}|$, which is known to be contractible  \cites{Davis, moussongthesis}. Theorem \ref{combin} below provides a combinatorial proof.
  Likewise, for any discrete $BN$--pair the canonical map induced by inclusion of subgroups
\beq
\hcl_{I\in \cS}\BPI \stackrel{\thicksim}{\longrightarrow} BG
\label{hocoBG}
\eeq
\nod is a homotopy equivalence including, trivially, the cases where $W$ is finite. Here non-trivial decompositions have Davis's version of the Tits building as coset geometries. 
  For completeness, we also note that (\ref{hocoBW}) and (\ref{hocoBG}) can be deduced from the contractibility of the usual Coxeter complex (resp. Tits building) inductively because this provides a method to verify (\ref{hocoBG}) for complex topological Kac-Moody groups. More specifically, for any topological $G$, a $BN$--pair with closed parabolic subgroups and  infinite Weyl group $W$ the canonical
map $\hcl_{I\in \cR} \BPI {\longrightarrow} BG$ for $\cR$ the set of all proper subsets of the generating set of $W$ has contractible coset geometry \cite{contract}. Thus, this map is a weak homotopy equivalence.
Because parabolic subgroups inherit the structure of a $BN$--pair and inductively (on $|I|$) have homotopy decompositions (\ref{hocoBG}) in terms of finite type parabolics,  Theorem \ref{coverholim} may be applied to obtain
(\ref{hocoBG}) for $BG$ whenever all parabolic subgroups are closed. This proof of (\ref{hocoBG}) for topological $BN$--pairs are implicit in \cites{classKM, nituthesis, nitutkm, domkt}.
Note that  (\ref{hocoBW}--\ref{hocoBG}) are sharp in the sense that any proper subposet of $\cS$ will not suffice to give a homotopy decomposition.

Recall that each parabolic subgroup of a Kac-Moody group has a semi-direct product decomposition $P_I \cong G_I \ltimes U_I$ (\ref{levidecomp}) with these $G_I$ called Levi component subgroups.
  For $K$ a topological Kac-Moody group (with its topology induced by the complex numbers) the subgroups $U_I$ for $I$ finite type are known to be contractible \cites{kumar}.
This fact with  (\ref{levidecomp}) and  (\ref{hocoBG}) implies that the inclusion of the Levi component subgroups
induces a  homotopy decomposition for topological Kac-Moody group classifying spaces
\beq
\hcl_{I\in \cS}\BGI  \stackrel{\thicksim}{\longrightarrow} \hcl_{I\in \cS}\BPI \stackrel{\thicksim}{\longrightarrow} BK
\label{hocoBGcomplex}
\eeq
\nod where $G_I$ are reductive, Lie, Levi component subgroups, cf. \cites{nituthesis, nitutkm, domkt}. This decomposition has been
one of the fundamental tools for the study $BK$ using homotopy theory \cites{pcompact, rank2mv, BKtoBK, classKM, nitutkm}.

Though discrete $U_I(R)$ over a commutative ring $R$ are in general quite far from being contractible (see \ref{sec:calcp}), they are expressible in terms of honestly unipotent subgroups, i.e. each subgroup can be embedded into
the subgroup of $GL_n(R)$ of upper triangular matrices with unit diagonal for some $n$. The next section will make this precise.

\section{Homotopy Decompostions}
\label{sec:homodecomp}

The positive unipotent subgroup of a discrete Kac-Moody group $U^+=U_\emptyset$ is expressible \cites{CR, Buildings}
as a colimit of finite dimensional unipotent algebraic subgroups $U_v$ (\ref{uwdef})
\beq
\cl_\cat{W} U_{w} \cong U^+
\eeq
as conjectured by Kac and Petersen \cite{relations}. This colimit presentation is induced (see Remark \ref{rem:cosetgeo}) by an integral homotopy decomposition
and in fact all $BU_I(R)$ are expressible in terms of unipotent subgroup classifying spaces.
The proof relies only on the combinatorics of the RGD system (see \ref{sec:rgd}) associated to a Kac-Moody 
and applies to many variants of Kac-Moody groups (cf. \cites{CR, Buildings} for discussion) as well other groups with  RGD systems.

\begin{thm}
Let $U^+=U_\emptyset$ be the positive unipotent subgroup of a discrete Kac-Moody group whose Weyl group is viewed as a poset $\cat{W}$ with respect to the weak Bruhat order (\ref{weakorder}).  
Then,  the canonical map induced by inclusion of subgroups
\beq
\hcl_\cat{W} BU_{w} \stackrel{\thicksim}{\longrightarrow} BU^+
\label{hocoUnewslick}
\eeq
\nod induces a homotopy equivalence where $U_v$ (\ref{uwdef}) are finite dimensional unipotent algebraic subgroups.
More generally, $
\hcl_{\hat{U}^+_I \cdot \cat{W}} BU_{(\hat{u}, w)} \stackrel{\thicksim}{\longrightarrow} BU_I^+
\label{hocoUInewslick}
$ where $\hat{U}^+_I \cdot \cat{W}$ is a poset with a contractible geometric realization and these $U_{(\hat{u}, w)}$ are unipotent.
\label{hocoUnewthmslick}
\end{thm}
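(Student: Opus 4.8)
The strategy is to show that (\ref{hocoUnewslick}) is a homotopy equivalence by proving that its coset geometry is contractible; the contractibility is obtained by replacing the $\cat{W}$--indexed decomposition with one indexed by the Davis poset $\cat{W}_\cS$, using the comparison of Theorem \ref{combin}, and then recognizing the resulting coset geometry as (Davis's realization of) a Tits building. By \S\ref{sec:cosetgeo}, the canonical map $\hcl_\cat{W} BU_w \to BU^+$ is a homotopy equivalence if and only if its coset geometry $\hcl_\cat{W} U^+/U_w$, which is the homotopy fibre over $BU^+$, is weakly contractible (both spaces are connected because $\cat{W}$ has the initial object $e$).

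To construct the $\cat{W}_\cS$--indexed replacement, note first that $w \mapsto BU_w$ is a genuine diagram on the weak-order poset $\cat{W}$ because $U_v \subseteq U_w$ whenever $v \le w$, which follows from the intersection formula (\ref{Uwintersect}). By Lemma \ref{stablem} with $J=\emptyset$ we have $U_w = U^+ \cap wB^-w^{-1} = \mathrm{Stab}_{U^+}(wB^-)$ inside $G/B^-$, and more generally $U_{w_I} = U^+ \cap wP_I^-w^{-1} = \mathrm{Stab}_{U^+}(wP_I^-)$ inside $G/P_I^-$, with $U_{w_I} \subseteq U_{v_J}$ whenever $wW_I \subseteq vW_J$; hence $wW_I \mapsto BU_{w_I}$ is a diagram over $\cat{W}_\cS = \cat{Tr}(I \mapsto W/W_I)$ equipped with a map to $BU^+$. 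The generalized Birkhoff decomposition (\ref{genbirkhoff}) gives $U^+$--equivariant bijections $\coprod_{wW_I \in W/W_I} U^+/U_{w_I} \cong G/P_I^-$ for every $I \in \cS$, compatible with the structure maps induced by $I \subseteq J$. Applying Proposition \ref{transprop} to the transport category $\cat{W}_\cS$ then identifies $\hcl_{\cat{W}_\cS}(wW_I \mapsto BU_{w_I})$, over $BU^+$, with $\hcl_{I\in\cS}\big(\coprod_{W/W_I} BU_{w_I}\big)$, whose coset geometry is $\hcl_{I\in\cS}\big(\coprod_{W/W_I} U^+/U_{w_I}\big) = \hcl_{I\in\cS}(G/P_I^-)$ by those bijections. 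The last space is the coset geometry of the homotopy equivalence $\hcl_{I\in\cS} BP_I^- \xrightarrow{\sim} BG$ of (\ref{hocoBG}) for the negative $BN$--pair $(G,B^-,N,S)$, namely Davis's version of the Tits building, and is therefore contractible; so $\hcl_{\cat{W}_\cS}(wW_I \mapsto BU_{w_I}) \to BU^+$ is already a homotopy equivalence.

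It remains to compare the two index categories. Since $wW_I \mapsto BU_{w_I}$ is the pushforward of $w \mapsto BU_w$ along the order-preserving map $\cat{W}_\cS \to \cat{W}$, $wW_I \mapsto w_I$ (the longest element of the coset), the comparison of Theorem \ref{combin} between $\cat{W}$--indexed and $\cat{W}_\cS$--indexed diagrams yields $\hcl_{\cat{W}_\cS}(wW_I \mapsto BU_{w_I}) \xrightarrow{\sim} \hcl_\cat{W} BU_w$ over $BU^+$, and (\ref{hocoUnewslick}) follows. The relative statement for $BU_I^+$ is obtained by running the same scheme inside the parabolic $P_I^+$: using the Levi decomposition $P_I^\pm \cong G_I \ltimes U_I^\pm$ (\ref{levidecomp}) and the semidirect product $U^+ \cong (U^+ \cap G_I) \ltimes U_I^+$, one presents $BU_I^+$ as a homotopy colimit over a poset $\hat{U}_I^+ \cdot \cat{W}$ that carries, besides the weak-order data of $\cat{W}$, the additional (finite, hence harmless) weak-order data of $\hat{U}_I^+ = U^+ \cap G_I$; its geometric realization is contractible because Proposition \ref{transprop} and Theorem \ref{combin} again reduce the coset geometry to a Tits building, this time for the twin $BN$--pair relating $G_I$ and $K$.

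The main obstacle is clearly Theorem \ref{combin} itself — the statement that the Davis-complex comparison preserves homotopy colimits. My plan for it is to invoke the Pullback Criterion (Theorem \ref{termin}), localized through Proposition \ref{localprop}: since $\cat{W}$ is a meet-semilattice, it suffices to analyze the preimages in $\cat{W}_\cS$ of the closed intervals $[e,u]$ of $\cat{W}$, and giving a usable description of these preimages — via the solution to the word problem (Theorem \ref{wordproblem}) and the way longest elements of finite-type cosets sit in the weak Bruhat order — is the genuine combinatorial content, after which contractibility of the relevant under-categories is assembled one simplex at a time. Everything else is routine bookkeeping with the Birkhoff decompositions and the transport-category and homotopy-colimit formalism collected in \S\ref{sec:combo}.
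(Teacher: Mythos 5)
Your proposal is correct and follows essentially the same route as the paper: the paper's Lemma \ref{hocoUnewthm} establishes the $\cat{W}_\cS$--indexed decomposition exactly as you do (generalized Birkhoff decomposition, Lemma \ref{stablem} to identify stabilizers, contractibility of the Tits building $\hcl_{I\in\cS} G/P_I^-$, and Proposition \ref{transprop}), and then Theorem \ref{combin}/Corollary \ref{combinlocalact} transfer the index category from $\cat{W}_\cS$ to $\cat{W}$, with Lemma \ref{combocontract} supplying the contractibility of $|\hat{U}_I^+\cdot\cat{W}|$ by reduction to the building of the Levi factor $G_I$. Your plan for Theorem \ref{combin} (Pullback Criterion localized to closed intervals via Proposition \ref{localprop}) is also the paper's strategy, so the only differences are cosmetic (coset-geometry phrasing versus the Borel-construction chain, and the minor slip of calling $D\mapsto DL$ a pushforward rather than a restriction along $L$).
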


Note that  the induced colimit presentations
corresponding to $I \neq \emptyset$ are new. Theorem \ref{hocoUnewthmslick} will be shown by using alternative homotopy decompositions indexed over posets with finite chains given in Lemma \ref{hocoUnewthm} below.

To accomplish this reduction, we relate diagrams over the poset $\cat{W}$ with certain diagrams over the poset $\cat{W}_\cS$ (see \ref{sec:trans}) which underlies
the Davis complex \cite{Davis} of a Coxeter group. 
\begin{thm}
For any Coxeter group $W$, the longest element functor $L:\cat{W}_\cS \rightarrow \cat{W}$, $vW_I \mapsto v_J$ (\ref{Sposet}),
 pullbacks homotopy colimits so that the natural map \\$\hcl_{\cat{W}_\cS} DL \longrightarrow \hcl_{\cat{W}} D$
is a weak homotopy  equivalence {\em for any} diagram of spaces $D:\cat{W} \rightarrow \cat{Spaces}$.
\label{combin}
\end{thm}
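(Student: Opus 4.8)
The plan is to verify the second condition of the Pullback Criterion (Theorem~\ref{termin}) for $F=L$. Since $\cat{W}_\cS$ and $\cat{W}$ are posets, for $w\in\cat{W}$ the slice $w\downarrow L$ is the full subposet
\[
\cat{C}_w \;=\; \{\, vW_I\in\cat{W}_\cS \;\mid\; w\le v_I \,\}
\]
of $\cat{W}_\cS$, where $v_I$ denotes the longest element of $vW_I$ and $\le$ is the weak Bruhat order. So everything reduces to showing that $|\cat{C}_w|$ is weakly contractible for every $w$. (The case of finite $W$ is degenerate: then $\cat{W}$ has a top element $w_0$ and $\cat{W}_\cS$ has terminal object $W=eW_S$, so both homotopy colimits compute $D(w_0)$ and the map is an equivalence.)

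First I would record that $v_I$ is the $\le$-maximum of the coset $vW_I$: writing $vW_I=v^IW_I$ with $v^I$ the minimal representative, one has $v^Iy\le v^Iy'$ iff $y\le y'$ in $W_I$, and $w_0(I)$ is the top of $W_I$. Using this and the Word Problem one obtains the dichotomy: $w\le v_I$ holds iff \emph{either} every element of $vW_I$ is $\ge w$, \emph{or} $w\in vW_I$ (if a reduced word for $w$ is a prefix of a reduced word for $v^Iw_0(I)$ with $l(w)\le l(v^I)$, then already $w\le v^I$ and the whole coset is $\ge w$; otherwise the prefix has the form $v^Iz$ with $z\in W_I$, so $w\in vW_I$). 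Consequently $\cat{C}_w=\mathcal{A}\cup\mathcal{B}$ as full subposets of $\cat{W}_\cS$, where $\mathcal{A}=\{\,vW_I\in\cat{W}_\cS\mid vW_I\subseteq w{\uparrow}\,\}$ and $\mathcal{B}=\{\,wW_I\mid I\in\cS\,\}$, with $w{\uparrow}:=\{x\in W\mid w\le x\}$. Since $\mathcal{A},\mathcal{B}$ are full subposets, $|\cat{C}_w|=|\mathcal{A}|\cup|\mathcal{B}|$ with $|\mathcal{A}|\cap|\mathcal{B}|=|\mathcal{A}\cap\mathcal{B}|$.

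I would then verify that $|\mathcal{B}|$, $|\mathcal{A}\cap\mathcal{B}|$, and $|\mathcal{A}|$ are all contractible. The first is clear: $wW_I\mapsto I$ is an isomorphism $\mathcal{B}\cong\cS$, and $\cS$ has least element $\emptyset$. For the second, $wW_I\subseteq w{\uparrow}$ forces $w$ to be the minimal representative of its coset, i.e. $I\cap D(w)=\emptyset$ for $D(w)=\{s\in S\mid l(ws)<l(w)\}$; so $\mathcal{A}\cap\mathcal{B}\cong\{I\in\cS\mid I\subseteq S\setminus D(w)\}$, again with least element $\emptyset$. The heart of the matter is $|\mathcal{A}|$. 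The clean route is geometric: realize $|\cat{W}_\cS|$ as the Davis complex $\Sigma$ with its Moussong $\mathrm{CAT}(0)$ metric \cite{moussongthesis}, and use the inversion‑set description of the weak Bruhat order, $u\le v\iff N(u)\subseteq N(v)$, where $N(v)$ is the finite set of walls of $\Sigma$ separating the base chamber from $v$. One checks that $w{\uparrow}$ is a convex set of chambers: a minimal gallery crosses exactly the walls separating its endpoints, and any wall in $N(w)$ separates both endpoints (being $\ge w$) from the base chamber, hence neither from the other. It follows that $|\mathcal{A}|$ — the subcomplex spanned by cells all of whose chambers lie in $w{\uparrow}$ — sits inside the intersection $H:=\bigcap_{m\in N(w)}\Sigma^{+}_{m}$ of the finitely many closed half‑spaces on the $w$‑side of the walls of $N(w)$, and one argues that $|\mathcal{A}|$ is a deformation retract of $H$ (collapsing the half‑cells of cells straddling a wall of $N(w)$ onto their faces contained in $H$). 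Since $H$ is a nonempty ($wW_\emptyset\in H$) convex subset of a $\mathrm{CAT}(0)$ space it is contractible, hence so is $|\mathcal{A}|$. An alternative avoiding the metric is induction on $l(w)$: for $w=w's$ with $l(w')=l(w)-1$, the complex $|\mathcal{A}_w|$ is obtained from $|\mathcal{A}_{w'}|$ by discarding the part of the convex region $w'{\uparrow}$ lying on the base side of the single new wall $w'sw'^{-1}$, which one checks to be a homotopy equivalence; the base case $w=e$ is the contractibility of the Davis complex (see \ref{sec:known}). One may also first invoke Proposition~\ref{localprop} to replace $\cat{W}$ by a closed interval, after which $\cat{C}_w$ is finite.

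Finally, the inclusions $|\mathcal{A}\cap\mathcal{B}|\hookrightarrow|\mathcal{A}|$ and $|\mathcal{A}\cap\mathcal{B}|\hookrightarrow|\mathcal{B}|$ are inclusions of subcomplexes, hence cofibrations, so $|\cat{C}_w|=|\mathcal{A}|\cup_{|\mathcal{A}\cap\mathcal{B}|}|\mathcal{B}|$ is a homotopy pushout of contractible spaces over a contractible space, and is therefore contractible. By Theorem~\ref{termin}, $L$ pulls back homotopy colimits. I expect the main obstacle to be precisely the contractibility of $|\mathcal{A}|$: matching the combinatorially defined full subcomplex $|\mathcal{A}|$ with an honestly convex subspace of the Davis complex (the subtlety being that walls cut through cells, so $|\mathcal{A}|$ must be seen as a deformation retract of, rather than literally equal to, an intersection of closed half‑spaces), or else carrying out the inductive collapse with care.
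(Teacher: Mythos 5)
Your reduction to the contractibility of the slices $w\downarrow L$ matches the paper's starting point, but the combinatorial dichotomy on which your whole decomposition rests is false. You claim that $w\le v_I$ forces either $vW_I\subseteq w{\uparrow}$ (equivalently $w\le v^I$) or $w\in vW_I$. Take any infinite $W$ whose standard parabolic on $\{s_1,s_2,s_3\}$ is of type $A_3$ (e.g.\ the affine group of type $\tilde A_3$), let $w=s_3$ and consider the finite type coset $C=s_1W_{\{s_2,s_3\}}$, with minimal representative $v^I=s_1$ and longest element $v_I=s_1s_2s_3s_2$. Since $s_3\cdot(s_1s_2s_3)=s_1s_3s_2s_3=s_1s_2s_3s_2$ with $1+3=4=l(v_I)$, we have $s_3\le v_I$ in the weak order (\ref{weakorder}); yet $s_3\not\le s_1$ and $s_3\notin C$. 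So $C$ lies in $w\downarrow L$ but in neither $\mathcal{A}$ nor $\mathcal{B}$. The flaw in your justification is the tacit assumption that every reduced word for $v_I$ is a reduced word for $v^I$ followed by one for the longest element of $W_I$: here $s_3s_1s_2s_3$ is a reduced word for $v_I$ whose length-one prefix is neither a prefix of $s_1$ nor of the form $s_1z$ with $z\in W_I$. Geometrically, $w\downarrow L$ contains cells that \emph{straddle} the walls separating $e$ from $w$ (bottom chamber on the identity side, top chamber beyond), and your pushout $|\mathcal{A}|\cup_{|\mathcal{A}\cap\mathcal{B}|}|\mathcal{B}|$ simply omits them; the convexity of $w{\uparrow}$ says nothing about them. (There is also a secondary problem: even where both pieces are correct, $|\mathcal{A}\cup\mathcal{B}|\neq|\mathcal{A}|\cup|\mathcal{B}|$ for full subposets, since a chain $\{v\}\subset wW_I$ with $v>w$, $v\in wW_I$ and $w$ not minimal in $wW_I$ has one endpoint in $\mathcal{A}\setminus\mathcal{B}$ and the other in $\mathcal{B}\setminus\mathcal{A}$.)

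These straddling cells are precisely what the paper's proof is organized around: it first invokes Proposition \ref{localprop} to replace $\cat{W}$ by a closed interval $[v,w]$, and then shows $L^{-1}[v,w]$ is contractible by induction on chain length, covering it by the subcategories $L^{-1}[v,ws_i]$ for $i\in I_w$ together with the top piece ${\bf Y}_w\cap L^{-1}[v,w]$, and checking via Theorem \ref{coverholim} that every intersection is again of the same form (hence contractible by induction) or has a terminal object. If you want to rescue a geometric argument, you would need to show that the full subcomplex of the Davis complex spanned by all of $\{vW_I: w\le v_I\}$ — including the straddling cells — deformation retracts onto the intersection of closed half-spaces on the $w$-side of the walls separating $e$ from $w$; that is plausible, but it is exactly the content your decomposition was meant to avoid, and it does not follow from convexity of $w{\uparrow}$ alone.
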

This immediately implies that
 the Davis complex is contractible  since the poset $\cat{W}$ has the identity of $W$ as its initial object.
 When $W\cong (\ZZ/2\ZZ)^{\ast n}$,
 Theorem \ref{combin} lets us replace a diagram over an infinite depth tree ($\cat{W}$) with a canonical diagram over depth one tree ($\cat{W}_\cS$) obtained via barycentric subdivision. Theorem \ref{combin} allows this
 procedure to be extended to posets indexed by more general Coxeter groups.

\subsection{The proof of Theorem \ref{combin}}
\label{sec:pb}

Our proof of Theorem \ref{combin} will inductively  pullback homotopy colimits over closed intervals and apply Proposition \ref{localprop}. For instance, the following corollary---which will be used in \ref{sec:calcp}---is immediate from our proof by Proposition \ref{localprop}.
\begin{cor}
With the definitions of Theorem \ref{combin}, let $\cat{X} \sset \cat{W}$ be a full subposet covered by a collection of  intervals $[v,w]$,
 then the longest element map $L|_{L^{-1}\cat{X}}$ pulls back homotopy colimits.
\label{combinlocal}
\end{cor}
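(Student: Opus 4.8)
\textbf{Proof plan for Corollary \ref{combinlocal}.}
The plan is to run the same inductive argument that proves Theorem \ref{combin}, but localized to the subposet $\cat{X}$ via Proposition \ref{localprop}. First I would observe that, because $\cat{X}$ is covered by closed intervals $[v,w]\sset \cat{W}$ and $\cat{W}$ has all greatest lower bounds (any subset of elements of a Coxeter group in the weak Bruhat order has a meet, since it is a complete meet-semilattice), the full subposet $\cat{X}$ also has all greatest lower bounds of the families that arise: the intersection of two intervals $[v_1,w_1]\cap[v_2,w_2]$ is again an interval (possibly empty), so $\{[v,w]\}$ is a cover of $\cat{X}$ closed under finite intersection. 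Hence Proposition \ref{localprop} applies to $L|_{L^{-1}\cat{X}}\co L^{-1}\cat{X}\to\cat{X}$ once we know that $L$ restricted to the preimage of every closed interval contained in $\cat{X}$ pulls back homotopy colimits.

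The second and main step is exactly the content established in the proof of Theorem \ref{combin}: there one verifies, by an induction (on the length of $w$, or on the rank of the relevant finite-type parabolics), that for every closed interval $[v,w]$ in $\cat{W}$ the longest-element functor $L|_{L^{-1}[v,w]}\co L^{-1}[v,w]\to[v,w]$ pulls back homotopy colimits. Since every interval occurring in the cover of $\cat{X}$ is a closed interval of $\cat{W}$, this local statement is available verbatim. I would phrase the corollary's proof as: ``By the argument of Theorem \ref{combin} (specifically the interval-by-interval induction), $L|_{L^{-1}[v,w]}$ pulls back homotopy colimits for every interval $[v,w]$; since $\cat{X}$ is covered by such intervals and this cover is closed under intersection, Proposition \ref{localprop} gives that $L|_{L^{-1}\cat{X}}$ pulls back homotopy colimits.'' The only subtlety to check carefully is that Proposition \ref{localprop} is stated for a map of posets whose target has all greatest lower bounds and whose cover is by the pullbacks of all closed intervals; here I restrict attention to the sub-cover coming from the intervals covering $\cat{X}$, so I should note that the proof of Proposition \ref{localprop} only uses a cover by intervals that is closed under intersection, not the totality of all intervals, and that the preimage cover $\{L^{-1}[v,w]\}$ still covers $L^{-1}\cat{X}$ and is closed under intersection because preimages commute with intersections.

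The step I expect to be the genuine obstacle is already discharged inside Theorem \ref{combin}: proving that $L|_{L^{-1}[v,w]}$ pulls back homotopy colimits, i.e. that for each $x\in[v,w]$ the category $x\downarrow L|_{L^{-1}[v,w]} = L^{-1}(x\downarrow[v,w])$ has weakly contractible realization. Granting Theorem \ref{combin} (and in particular this local contractibility, which its proof establishes), the present corollary is a formal consequence of Proposition \ref{localprop} applied to the restricted cover, with no further homotopy-theoretic input needed.
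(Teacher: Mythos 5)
Your proposal is correct and matches the paper's own (very terse) justification exactly: the paper states that the corollary ``is immediate from our proof by Proposition \ref{localprop},'' meaning precisely what you spell out --- the induction in the proof of Theorem \ref{combin} establishes contractibility of $L^{-1}[v,w]$ for every closed interval, and the covering argument of Proposition \ref{localprop} (which, as you note, only needs a cover by intervals closed under intersection) then localizes the conclusion to $\cat{X}$. No further input is needed.
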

We may also extend Theorem \ref{combin} to situations where $\cat{W}$ is the fundamental domain of a group action on a poset.
 For our purposes here, we will take $W$ to be Weyl group of a group with RGD system. We define $V \cdot \cat{W}$ for any $V\le U^+$ to be the $V$--orbit under left multiplication of ${\bf  W}$ realized as $\{ U_w \}_{w\in W}$ within the poset of cosets of subgroups of $U^+$.  We also define $V \cdot\cat{W}_\cS$
so that
$L$ extends to a $V$--equivariant functor $V\cdot L:V\cdot \cat{W}_\cS \rightarrow V\cdot \cat{W}$, i.e. $wW_I$ corresponds to $U_{w_I}$.
  \begin{cor}
Let $\cat{W}$ be realized as the poset of subgroups $\{ U_w \}_{w\in W}$ ordered by inclusion.   Then
 $V\cdot L:V \cdot  \cat{W}_\cS \rightarrow V \cdot  \cat{W}$ pulls back homotopy colimits for any $V\le U^+$.
\label{combinlocalact}
\end{cor}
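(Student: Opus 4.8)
The plan is to deduce this from Corollary \ref{combinlocal} by a localization/descent argument, reducing the equivariant statement to the non-equivariant one over suitable subposets of $\cat{W}$. First I would set up the $V$--action carefully: $V \le U^+$ acts on the poset $\{U_w\}_{w\in W}$ by left multiplication, $u \cdot U_w = uU_w u^{-1}$ (or more precisely by the induced action on the coset $uU_w$ inside the subgroup lattice of $U^+$), and similarly on $\{U_{w_I}\}$; the functor $V\cdot L$ sends $uU_{w_I}$ to $uU_{w_I}u^{-1}\cdot$ -- i.e. the coset $uW_I$ is carried to the coset of $U_{w_I}$. By construction $V\cdot L$ is $V$--equivariant and restricts to $L$ on the fundamental domain $\cat{W}\sset V\cdot\cat{W}$. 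The key structural point is that $V\cdot\cat{W}$ and $V\cdot\cat{W}_\cS$ are posets with free (or at least nicely behaved) $V$--action, and that the under-categories needed to apply the Pullback Criterion (Theorem \ref{termin}) are each isomorphic to under-categories living in a single translate of $\cat{W}$.

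The main steps, in order, are as follows. (1) Invoke Proposition \ref{localprop}: since $V\cdot\cat{W}$ has all greatest lower bounds (intersections of cosets $uU_w$ are again of this form, being controlled by \eqref{Uwintersect} together with the group-theoretic intersection of cosets), it suffices to check that $V\cdot L$ pulls back homotopy colimits over the preimage of every closed interval $[uU_w, u'U_{w'}]$ in $V\cdot\cat{W}$. (2) Observe that any closed interval in $V\cdot\cat{W}$ with top element $u'U_{w'}$ is, after translating by $(u')^{-1}$ on the left, an interval lying entirely inside the fundamental copy $\cat{W}$ -- this is because the interval below a single coset $u'U_{w'}$ consists of cosets contained in it, and left-translation by $(u')^{-1}$ identifies this with the interval below $U_{w'}$ in $\cat{W}$, using that conjugation by $u'$ is a poset isomorphism compatible with $L$. (3) Under this identification, $V\cdot L$ restricted to the preimage of that interval becomes $L$ restricted to the preimage of the corresponding interval (or union of intervals) in $\cat{W}$, which is exactly the situation covered by Corollary \ref{combinlocal}. (4) Conclude via Proposition \ref{localprop} that $V\cdot L$ pulls back homotopy colimits.

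The step I expect to be the main obstacle is (2)--(3): verifying precisely that the $V$--translation identifications are compatible with the longest-element functor and with the greatest-lower-bound structure, so that a closed interval (and its $L$--preimage, which may be a union of intervals rather than a single interval) in $V\cdot\cat{W}$ genuinely matches a subposet of $\cat{W}$ of the type allowed in Corollary \ref{combinlocal}. Concretely one must check that for $u\in V$ the map $U_w \mapsto uU_w$ intertwines $L$ with $V\cdot L$, and that $L^{-1}$ of a closed interval is covered by intervals (so that Corollary \ref{combinlocal}'s hypothesis ``$\cat{X}$ covered by a collection of intervals'' is met) -- this last point follows from the explicit description of $\cat{W}_\cS$ as finite-type cosets and the fact that $L^{-1}[v,w]$ decomposes according to which finite-type coset maps into $[v,w]$. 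Once these bookkeeping compatibilities are in place, the argument is formal. An alternative, which I would mention as a remark, is to pass directly to the Grothendieck/transport-category picture: $V\cdot\cat{W}$ is the transport category of the $V$--set functor, and one can phrase the whole descent through Theorem \ref{coverholim} applied to the cover of $V\cdot\cat{W}$ by its $V$--translates of $\cat{W}$, but the interval-by-interval approach via Proposition \ref{localprop} is cleaner and reuses exactly what was already proved.
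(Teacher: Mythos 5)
Your route is genuinely different from the paper's. The paper proves this corollary in one stroke by observing that $V\cdot\cat{W}$ and $V\cdot\cat{W}_\cS$ are the transport categories of $w\mapsto V/(V\cap U_w)$ and $wW_I\mapsto V/(V\cap U_{w_I})$, and then applying Proposition \ref{transprop}: a diagram over a transport category has the same homotopy colimit as the associated fibrewise-coproduct diagram over the base, pulling back along $V\cdot L$ corresponds to pulling back the associated diagram along $L$, and a two-out-of-three argument on the resulting commuting square reduces everything to Theorem \ref{combin}. Your interval-by-interval descent through Proposition \ref{localprop} and Corollary \ref{combinlocal} is a workable alternative; it is more hands-on (it re-examines the coset combinatorics rather than quoting the transport-category formalism) but requires more bookkeeping, which is exactly where your write-up has two inaccuracies.

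First, $V\cdot\cat{W}$ does \emph{not} have all greatest lower bounds: two cosets $uU_w$ and $u'U_{w'}$ whose set-theoretic intersection contains no element of the form $vU_x$ with $v\in V$ (in particular, disjoint cosets) have no lower bound at all, so the hypothesis of Proposition \ref{localprop} fails as literally stated. The proof of that proposition only ever forms infima of families of intervals all containing a fixed object $i$, and for families bounded below by some $uU_x$ the greatest lower bound does exist and equals the set-theoretic intersection $u\bigl(\bigcap U_{w_a}\bigr)=uU_{\inf w_a}$ by (\ref{Uwintersect}); you should say this explicitly rather than assert the global hypothesis. Second, your step (2) translates by the wrong element and conflates a closed interval with a down-set: the down-set of $u'U_{w'}$ contains cosets $yU_x$ with $y\in U_{w'}$ but $y\notin U_x$, so $(u')^{-1}\cdot(-)$ does not carry it into the fundamental copy $\cat{W}$. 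The correct observation is that a nonempty closed interval $[uU_w,u'U_{w'}]$ consists precisely of the cosets $uU_x$ with $w\le x\le w'$ (any $vU_x$ squeezed between them must contain $u$, hence equal $uU_x$), so left translation by $u^{-1}$ identifies it with $[U_w,U_{w'}]\sset\cat{W}$, and the corresponding fibre of $V\cdot L$ over it is isomorphic to $L^{-1}[w,w']$ because the condition $uU_w\sset vU_{x_I}$ forces $v(V\cap U_{x_I})=u(V\cap U_{x_I})$. With those two repairs the appeal to Corollary \ref{combinlocal} closes the argument.
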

\begin{proof}
 Observe  that $V \cdot  \cat{W}$ is the transport category for $X: \cat{W} \rightarrow \cat{Sets}$ defined by $w \mapsto V/(V \cap U_{w})$
and $V \cdot  \cat{W}_\cS$ is that transport category for $X_\cS: \cat{W}_\cS \rightarrow \cat{Sets}$ defined by $wW_I \mapsto V/(V \cap U_{w_I})$.
As in the proof of Proposition \ref{transprop}, any fixed functor $V \cdot  D:V \cdot  \cat{W} \rightarrow \cat{Spaces}$ is associated to $D:\cat{W} \rightarrow \cat{Spaces}$ defined by
$
D(w):= \coprod_{X(w)}  D (v (V \cap U_{w} ))
$.  If $V \cdot  D$ is pulled back along $V \cdot  L$, then we obtain $(V \cdot  D)(V \cdot  L):V \cdot  \cat{W}_\cS \rightarrow \cat{Spaces}$ which is associated to
$DL:\cat{W}_\cS \rightarrow \cat{Spaces}$ by the same procedure. Thus, we have a diagram
\beq
\xymatrix{
\hcl_{V \cdot  \cat{W}_\cS} (V \cdot  D) (V \cdot  L) \ar[d]^{V \cdot  L} \ar[r]^-{\sim} &  \hcl_{\cat{W}_\cS} \ar[d]^{L} DL \\
\hcl_{V \cdot  \cat{W}} V \cdot  D \ar[r]^-{\sim} & \hcl_\cat{W} D
\label{transportcat}}
\nonumber
\eeq
\nod that commutes up to homotopy and the horizontal maps are weak equivalences by Proposition \ref{transprop}.
Now, by Theorem \ref{combin}, the right vertical map is a weak equivalence and $V \cdot  L$ pulls back homotopy colimits.
\end{proof}

For instance, with a bit of reflection, we see that the functor $U^+\cdot L:U^+\cdot\cat{W}_\cS \rightarrow U^+\cdot\cat{W}$ directly relates the poset forms of the coset geometries associated to the homotopy decompositions of Lemma \ref{hocoUnewthm} (appearing below) and Theorem \ref{hocoUnewthmslick} in the $I=\emptyset$
case.
We also note a useful observation.
  \begin{prop}
Define ${w(I)}$ as the longest $v \in W_I$ such that $v \le w$ in $\cat{W}$.  The functor  $L_I: \cat{W} \rightarrow \cat{W}_I$ given by $w \mapsto w(I) $ pulls back homotopy colimits.  Moreover,
$L|_{L^{-1}[w,v]}$ pulls back homotopy colimits for all intervals and $V \cdot  L_I$ pulls back homotopy colimits for all $V\le U^+$ as in Corollary \ref{combinlocalact}.
\label{w(I)prop}
\end{prop}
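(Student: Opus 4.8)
The plan is to apply the Pullback Criterion (Theorem~\ref{termin}). Since $\cat{W}$ and $\cat{W}_I$ are posets, it suffices to prove that for every $v\in\cat{W}_I$ the under-category $v\downarrow L_I=L_I^{-1}(v\downarrow\cat{W}_I)$ has weakly contractible geometric realization, and I will obtain this in the strongest possible form --- by exhibiting an \emph{initial} object. This is precisely where $L_I$ behaves better than the longest-element functor $L$ of Theorem~\ref{combin}: morphisms in $\cat{W}_I$ are mere instances of the weak order~(\ref{weakorder}), not coset inclusions, so these under-categories collapse.

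First I would record the single piece of Coxeter combinatorics that is used: for every $w\in W$ and every $I\sset S$, the set $\{x\in W_I\mid x\le w\}$ has a greatest element, which is $w(I)$ by definition; equivalently $w$ factors uniquely as $w=w(I)\,w'$ with $w'$ the minimal-length representative of the coset $W_I w$ and $l(w)=l(w(I))+l(w')$ (a standard fact about parabolic subgroups of Coxeter groups, cf.\ \cite{Buildings}). Granting this, $w(I)\le w$ for all $w$ and $v(I)=v$ for $v\in W_I$, so $L_I$ is order-preserving: if $w\le u$ in $\cat{W}$, then $w(I)\in\{x\in W_I\mid x\le u\}$, hence $w(I)\le u(I)$. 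Now fix $v\in\cat{W}_I$. An object of $v\downarrow L_I$ is a $w\in\cat{W}$ with $v\le L_I(w)=w(I)$, and this condition is equivalent to $v\le w$: indeed $v\le w(I)$ gives $v\le w(I)\le w$, while $v\le w$ with $v\in W_I$ forces $v\le w(I)$ by maximality of the latter. Hence $v\downarrow L_I$ coincides with $v\downarrow\cat{W}$, which has $v$ as initial object; so $|v\downarrow L_I|\simeq\pt$ and Theorem~\ref{termin} shows that $L_I$ pulls back homotopy colimits.

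The two remaining assertions require no new idea. For a closed interval $[a,b]$ of $\cat{W}_I$ and $v\in[a,b]$, the under-category of $L_I|_{L_I^{-1}[a,b]}$ at $v$ is $\{u\in\cat{W}\mid a\le u(I)\le b\text{ and }v\le u(I)\}=\{u\in\cat{W}\mid v\le u\text{ and }u(I)\le b\}$, and this again has $v$ as initial object (note $v\in L_I^{-1}[a,b]$ since $v(I)=v\le b$); the parallel statement for $L$ itself restricted to preimages of intervals is Corollary~\ref{combinlocal}. Finally, for $V\le U^+$ one argues exactly as in the proof of Corollary~\ref{combinlocalact}: passing to the associated transport categories, Proposition~\ref{transprop} yields a homotopy-commutative square with horizontal weak equivalences whose right-hand vertical arrow is $L_I$, so the claim for $V\cdot L_I$ follows from the one just established for $L_I$.

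I do not expect a genuine obstacle. The only ingredient with real content is the classical parabolic-factorization fact recalled in the second paragraph; once it is available, the whole argument reduces to the observation that the relevant under-categories possess initial objects. The most bookkeeping-heavy point is the $V$-equivariant clause, but it is routine given Corollary~\ref{combinlocalact} and Proposition~\ref{transprop}; should a self-contained treatment be desired, the factorization fact itself admits a short induction on $l(w)$ via the exchange condition, which I would insert as a preliminary lemma.
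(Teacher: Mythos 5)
Your proof is correct and takes essentially the same route as the paper's: both identify the under-category $v \downarrow L_I$ with $v \downarrow \cat{W}$ and exhibit an initial object ($v$, resp.\ $w(I)$), treat the interval case by the same observation, and reduce the $V$-equivariant clause to the transport-category argument of Corollary~\ref{combinlocalact}. The only difference is that the paper simply asserts the identification $w \downarrow L_I = w(I)\downarrow \cat{W}$, whereas you justify it via the standard parabolic factorization $w = w(I)\,w'$ with additive lengths; that justification is correct and welcome detail.
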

\begin{proof}
 Note that $
w \downarrow L_I = L_I^{-1} (w(I) \downarrow \cat{W}_I)= w(I) \downarrow \cat{W}
$
and $w(I) \downarrow \cat{W}$ has initial object $w(I)$.  For closed intervals, $L_I^{-1} ([w,v])$ still has initial object $w(I)$. The functor $V \cdot  L_I$ is defined as in
Corollary \ref{combinlocalact} with respect to the base $\cat{W}_I$, i.e. $\cat{W}_I$ is realized
as the poset of subgroups $\{ U_w \}_{w\in W_I }$.
With this definition, $V \cdot  L_I$ pulls back homotopy colimits for all $V\le U^+$ as in the proof Corollary \ref{combinlocalact}.
\end{proof}

\begin{proof}[Proof of Theorem \ref{combin}]
 By Proposition \ref{localprop} it is sufficient to show that the geometric realization of the poset
 \beq
 \cat{X}^v_w := L^{-1} [v, w] .
 \label{xvw1}
 \nonumber
 \eeq
 \nod is contractible for all $v \le w \in \cat{W}$.
 Define $l[v,w]$ to be the maximal chain length in $[v,w]$. Let us proceed by induction on this length.  If
$v=w$, then   $wW_{I_w}$ is the (unique) terminal object of $\cat{X}^{v}_{v=w}$ and $|\cat{X}^{v}_{v=w}|\simeq \{ \ast \}$.

Fix $(v,w)$ with $v < w$.
We will cover $\cat{X}^v_w$ as a category and apply Theorem \ref{coverholim} to show that $|\cat{X}^v_w| \simeq \pt$, inductively.  Let us start to define the elements of this cover precisely.

For any $w \in \cat{W}$, there is a unique greatest $I_w\in \cS$ such that $w$ is longest in $wW_{I_w}$.  By Theorem \ref{wordproblem}, $I_w$ is precisely the set of all $i$ such that $s_i$ is the right most letter of some reduced word expression of $w$.  Define ${\bf Y}_w$ to be the full subcategory of the poset $\cat{W}_\cS$ with
\beq
\Objects ({\bf Y}_w) = \{ vW_J \in W_\cS |  vW_J \le wW_{I_w}\}.
\label{Yw}
 \nonumber
 \eeq
Now, $\cat{X}^v_w$ is a full subcategory of $\cat{W}_\cS$ and
\beq
\Objects (\cat{X}^v_w) \sset \bigcup_{v\le x \le w} \Objects ({\bf Y}_x)
\label{Xvw}
 \eeq
\nod where $v\le w$ refers to the weak Bruhat order on $W$ (\ref{weakorder}).  Because all chains in $\cat{W}_\cS$ are contained within some ${\bf Y}_x$,
$\cat{X}^v_w$ is covered by $ \{ {\bf Y}_x\cap \cat{X}^v_w\}_{{v\le x \le w}}$, as a category (see Definition \ref{catcover}).

 We now see that   $\cat{X}^v_w$ is covered as a category by $\cat{X}^v_{ws_i}$ for all $i \in I_w$ and  $\{ \cat{X}^v_w \cap {\bf Y}_w \}$
since
\beq
  \bigcup_{i \in I_w} \Objects (\cat{X}^v_{ws_i}) &=& \bigcup_{\stackrel{v\le x \le ws_i}{i \in I_w}} \Objects ({\bf Y}_x \cap \cat{X}^v_{ws_i}) \nonumber \\
  &=& \bigcup_{{v\le x < w}} \Objects ({\bf Y}_x \cap \cat{X}^v_{w}).
  \nonumber
\eeq
and $\cat{X}^v_w$ is covered by $ \{ {\bf Y}_x\cap \cat{X}^v_w\}_{{v\le x \le w}}$ by (\ref{Xvw}).

Remove all empty elements of this cover.  In particular, $\cat{X}^v_{ws_i}$ is non-empty only if $v \le ws_i$.
Define $y$ to be the shortest element of $wW_{I_w}$.
Any (non-empty) element of this cover contains the singleton coset $\{ z:=\sup \{ v, y \}\}$  which exists and is less than or equal to $w$ since $v < w$ and $y < w$.
  Thus, we can close this cover under intersection without introducing empty categories.

If $m\le |I_w|+1$ is the number of elements of this cover, we may define $\mathcal{U}: \Delta_{m+1} \rightarrow \cat{Cat}$ via $\{i_1, \dots i_k\} \mapsto \cat{U}_{i_1} \cap \ldots \cap \cat{U}_{i_k}$ for some enumeration of this cover. Here $\Delta_{m+1}$ is the category of inclusions of facets in the standard $m+1$--simplex.  By Theorem \ref{coverholim}, $\hcl |\mathcal{U}|$ is weakly homotopy equivalent to $|\cat{X}^v_w|$.  As $|\Delta_m| \simeq \{\ast\}$,
it is enough to show that each $|\cat{U}_{i_1} \cap \ldots \cap \cat{U}_{i_k}| \simeq \{\ast\}$ by induction on $l[v, w]$.  In fact, we will show that each element of this cover is isomorphic  to some $\cat{X}^{v_1}_{v_2}$ with $l[{v_1},{v_2}] < l[v,w]$ or has a terminal object.

{\bf Case 1:} $\bigcap_{i\in I \sset I_w} \cat{X}^v_{ws_i}$. Any non-empty intersection of $\cat{X}^v_{ws_i}$ for $i \in I_w$ is equal to some $\cat{X}^v_{x}$ with $x < w$   because
any intersection of intervals $[v,ws_i]$ will be  some interval $[v,x]$ and intersections pullback along functors.  Inductively, $|\cat{X}^v_{x}| \simeq \{\ast\}$.

{\bf Case 2:} $\bigcap_{i\in I \sset I_w} \cat{X}^v_{ws_i} \cap {\bf Y}_w$.
Any  intersection of  ${\bf Y}_w$ and at least one non-empty $\cat{X}^v_{ws_i}$ with $i \in I_w$ will be equal to $\cat{X}^v_x \cap {\bf Y}_w$ for some $x < w$.
  Multiplication by $y$ induces an isomorphism
of posets \beq \cat{X}^{e_{I_w}}_{e_{I_w}} \stackrel{\simeq}{\longrightarrow} {\bf Y}_w
 \label{iso}\eeq
\nod for $e_{I_w}$ the longest word in $W_{I_w}$.
Now, $\cat{X}^v_x \cap {\bf Y}_w = \cat{X}^z_x \cap {\bf Y}_w$ for $z=\sup \{ v, y \}$ and $\cat{X}^z_x \cap {\bf Y}_w$ is in bijection with $\cat{X}^{y^{-1}z}_{y^{-1}x}$ under (\ref{iso}).
Observe that $  l[y^{-1}z, y^{-1}x]=  l[z, x] \le l[v, x] < l[v, w]$ as $ v \le z \le x < w$.
By induction,
\beq
|\cat{X}^v_x \cap {\bf Y}_w = \cat{X}^z_x \cap {\bf Y}_w \cong \cat{X}^{y^{-1}z}_{y^{-1}x}|\simeq \{\ast\}.
\nonumber
\eeq

{\bf Case 3:} $\cat{X}^v_{w} \cap {\bf Y}_w$. In this case, there is terminal object, namely $wW_{I_w}$, and $|{ \bf X}^v_{w} \cap {\bf Y}_w| \simeq \pt$.
This completes the proof.
\end{proof}

\subsection{New homotopy decompositions for groups with RGD systems}
\label{sec:newdecomposition}

Theorem \ref{hocoUnewthmslick} will follow from Lemma \ref{hocoUnewthm}  and Lemma \ref{combocontract} (below) by pulling back appropriate homotopy colimits.

\begin{lem} Let $U^+=U_\emptyset$ be the positive unipotent subgroup of a group with RGD system and $\cat{W}_\cS$ be the poset underlying the Davis complex 
 (see \ref{sec:trans}).
 Then  the canonical map
\beq
\hcl_{\cat{W}_\cS} BU_{w_J} \stackrel{\thicksim}{\longrightarrow} BU^+
\label{hocoUnew}
\eeq
\nod induces a homotopy equivalence where $w_J$ is the longest word in $wW_J$ under the weak Bruhat order (\ref{weakorder}).
More generally, if the standard parabolic subgroup $P_I$ has a Levi decomposition then \beq
\hcl_{\hat{U}^+_I\cdot \cat{W}_\cS} BU_{(\hat{u}, w)} \stackrel{\thicksim}{\longrightarrow} BU_I^+
\nonumber
\label{hocoUInew}
\eeq
\nod where $\hat{U}^+_I\cdot \cat{W}_\cS$ is the poset defined in Corollary \ref{combinlocalact} and each $U_{(\hat{u}, w)}$ is isomorphic to a subgroup of some $U_v$.
\label{hocoUnewthm}
\end{lem}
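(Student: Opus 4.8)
The plan is to establish both decompositions of Lemma~\ref{hocoUnewthm} by computing their coset geometries, in the sense of \ref{sec:cosetgeo}: for a diagram $D$ of inclusions of subgroups of a group $H$, the canonical map $\hcl_\cat{I} BD \to BH$ is a homotopy equivalence exactly when its coset geometry $\hcl_\cat{I} H/D$ is weakly contractible, and by Proposition~\ref{transprop} that coset geometry is canonically the realization of the transport category $\cat{Tr}(i \mapsto H/D(i))$. The aim is to identify these transport categories, in both cases, with the poset form of the coset geometry of the known decomposition (\ref{hocoBG}) of $BG$ by the (negative) parabolics, whose realization is the contractible Davis--Tits building.

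Consider first the case $I = \emptyset$, i.e.\ the map (\ref{hocoUnew}); its coset geometry is $|\cat{Tr}(wW_J \mapsto U^+/U_{w_J})|$. I would compare this to $\cat{Tr}(J \mapsto G/P_J^-)$, the poset form of the coset geometry of $\hcl_{J\in\cS} BP_J^- \to BG$ for the $BN$-pair $(G,B^-,N,S)$. The generalized Birkhoff decomposition $G = \coprod_{wW_J \in W/W_J} U^+ w P_J^-$ of (\ref{genbirkhoff}) shows that the left $U^+$-action on $G/P_J^-$ has orbits indexed by $W/W_J$, and Lemma~\ref{stablem} identifies the stabilizer of $wP_J^-$ as $U^+ \cap wP_J^-w^{-1} = U_{w_J}$; hence $G/P_J^- \cong \coprod_{wW_J \in W/W_J} U^+/U_{w_J}$ as $U^+$-sets. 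This is natural in $J \in \cS$: the transition maps $G/P_J^- \to G/P_I^-$ for $J \subseteq I$ correspond, under these identifications, to the functoriality of the longest-element assignment $wW_J \mapsto w_J$. Reading off objects and morphisms then yields an isomorphism of categories $\cat{Tr}(wW_J \mapsto U^+/U_{w_J}) \cong \cat{Tr}(J \mapsto G/P_J^-)$. Since (\ref{hocoBG}) is a homotopy equivalence, the right-hand realization is contractible, so the coset geometry of (\ref{hocoUnew}) is contractible and (\ref{hocoUnew}) is a homotopy equivalence. (If $W$ is finite this is the degenerate equivalence $BU_{w_0} \to BU^+$, subsumed since then $\cat{W}_\cS$ has $eW_S = W$ as a terminal object.)

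For general $I$ with $P_I^\pm \cong G_I \ltimes U_I^\pm$ (\ref{levidecomp}), write $\hat{U}^+_I = U^+ \cap G_I$, so restricting the Levi decomposition to $U^+$ gives $U^+ = \hat{U}^+_I \ltimes U_I^+$ with $U_I^+$ normal. I would next verify that $U_{w_J}$ is compatible with this splitting, $U_{w_J} = (U_{w_J} \cap \hat{U}^+_I)(U_{w_J} \cap U_I^+)$: the root sets $\Phi_I^+$ and $\Phi^+ \setminus \Phi_I^+$ cutting out $\hat{U}^+_I$ and $U_I^+$ are each closed, so they split the finite unipotent group $U_{w_J}$ along the root factorization (\ref{multiply}). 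Consequently $U^+/U_{w_J}$ decomposes as a $U_I^+$-set into $\coprod_{\hat u \in \hat{U}^+_I/(\hat{U}^+_I \cap U_{w_J})} U_I^+/U_{(\hat u, w)}$, where $U_{(\hat u, w)} := \hat u U_{w_J} \hat u^{-1} \cap U_I^+$ is the stabilizer of the $U_I^+$-orbit of $\hat u U_{w_J}$; this is well defined because $\hat{U}^+_I \cap U_{w_J} \le U_{w_J}$ normalizes $U_{w_J}$, and it is manifestly isomorphic to a subgroup of $U_{w_J}$, which is one of the groups $U_v$. This says precisely that the functor $wW_J \mapsto U^+/U_{w_J}$ is reassembled, over the transport category $\hat{U}^+_I \cdot \cat{W}_\cS$ of Corollary~\ref{combinlocalact}, out of the functor producing the groups $U_{(\hat u, w)}$; collapsing a transport category over a transport category (the mechanism behind Proposition~\ref{transprop}) then identifies the coset geometry of $\hcl_{\hat{U}^+_I \cdot \cat{W}_\cS} BU_{(\hat u, w)} \to BU_I^+$ with $|\cat{Tr}(wW_J \mapsto U^+/U_{w_J})| = |U^+ \cdot \cat{W}_\cS|$, contractible by the previous paragraph. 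So this decomposition is a homotopy equivalence as well.

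I expect the main obstacle to be the categorical identification in the second paragraph: extracting from the generalized Birkhoff decomposition and Lemma~\ref{stablem} a genuine isomorphism $\cat{Tr}(wW_J \mapsto U^+/U_{w_J}) \cong \cat{Tr}(J \mapsto G/P_J^-)$, on objects and on morphisms and compatibly over $\cS$ (matching finite-type cosets in $W$ with simplices of the building). A secondary difficulty is the compatibility of $U_{w_J}$ with the Levi decomposition, needed so that the orbit stabilizers $U_{(\hat u, w)}$ organize correctly over $\hat{U}^+_I \cdot \cat{W}_\cS$ in the general case.
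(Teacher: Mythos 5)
Your proposal is correct and follows essentially the same route as the paper: the paper's primary computation runs the identification through the Borel construction $BU_I^+\simeq EU^+\times_{U_I^+}\hcl_\cS G/P^-_J$, but it explicitly records your version---matching the transport-category (poset) forms of the coset geometries of the two decompositions via the generalized Birkhoff decomposition and Lemma \ref{stablem}, and quoting contractibility of the building from (\ref{hocoBG})---as an equivalent alternative. The one step you supply that the paper sidesteps (by decomposing $G/P_J^-$ directly rather than $U^+/U_{w_J}$) is the compatibility $U_{w_J}=(U_{w_J}\cap \hat{U}^+_I)(U_{w_J}\cap U_I^+)$ with the Levi splitting; this does hold and produces the same orbit and stabilizer data $U_{(\hat u,w)}=\hat u U_{w_J}\hat u^{-1}\cap U_I^+$ as in the paper's proof.
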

\begin{proof}
 Let us first show the $I=\emptyset$ case.  We calculate
\beq
BU^+ &\simeq& EU^+ \times_{U^+} \{ \ast \} \simeq EU^+ \times_{U^+} \hcl_\cS G/P^-_J \nonumber \\
&\simeq&  \hcl_\cS EU^+ \times_{U^+} G/P^-_J \nonumber \\
&\simeq&  \hcl_\cS (\coprod_{w\in wW_J \in W/W_J} EU^+ \times_{U^+} U^+ w P^-_J) \nonumber \\
&\simeq&  \hcl_{ \cat{W}_\cS} EU^+ \times_{U^+} U^+ w P^-_J \nonumber \\
&\simeq&  \hcl_{\cat{W}_\cS} B( \Stab_{U^+} \{wP^-_J\} )
\label{quickway}
\eeq
\nod where the fourth equivalence requires the generalized Birkhoff decomposition (\ref{genbirkhoff}) and the fifth uses the isomorphism of posets $w P^-_J \mapsto w W_J$ and Proposition \ref{transprop} for
$X:\cS \rightarrow \cat{Sets}$ defined via $I \mapsto W/W_I$.   Alternatively, it is not overly difficult to check directly that the map $uwP_J^- \mapsto (wW_J, u\Stab_{U^+} \{wP^-_J\})$ is an isomorphism of the poset forms of the coset geometries associated to $\hcl_{\cat{W}_\cS} B(\Stab_{U^+} \{wP^-_J\})$ and $\hcl_{\cS} BP^-_J$, respectively.
Note that Lemma \ref{stablem} identifies the stabilizer $\Stab_{U^+} \{wP^-_J\}$ as $U_{w_J}$.  This completes the $I=\emptyset$ case.

Whenever $P_I$ has a Levi decomposition (\ref{levidecomp}), $U_I^+ \rtimes \hat{U}_I^+ \cong U^+ \subset P_I^+ \cong  U_I^+ \rtimes G_I$  so that
\beq
G = \coprod_{w\in W/W_J} U^+ w P_J^-= \coprod_{w\in W/W_J} U_I^+ \hat{U}_I^+ w P_J^-
\nonumber
\label{genbirkhoffI}
\eeq
\nod by the generalized Birkhoff decomposition (\ref{genbirkhoff}).  Thus,
\beq
BU_I^+ &\simeq&  EU^+ \times_{U_I^+} \hcl_\cS G/P^-_I \nonumber \\
&\simeq&  \hcl_{\hat{U}_I^+\cdot \cat{W}_\cS} B( \Stab_{U^+_I} \{ \hat{u} wP^-_I\} )
\nonumber
\label{quickwayact}
\eeq
\nod as in (\ref{quickway}).  Here the map $u\hat{u}wP_J^- \mapsto (wW_J, u\Stab_{U^+_I} \{\hat{u}wP^-_I\})$ is an isomorphism of the poset forms of the coset geometries associated to $\hcl_{\hat{U}_I^+ \cdot \cat{W}_\cS} B( \Stab_{U^+_I} \{ \hat{u} wP^-_I\} )$ and $\hcl_{\cS} BP^-_J$, respectively, for $u \in U^+_I$ and $\hat{u} \in \hat{U}_I^+$.

Let us characterize
$\Stab_{U_I^+} \{ \hat{u} w P_J^- \} = U_I^+ \cap \hat{u} w P_J^- w^{-1} \hat{u} ^{-1}$. We see
\beq
 \hat{u} ^{-1} U_I^+ \hat{u} \cap  w P_J^- w^{-1} &=& \hat{u} ^{-1} U_I^+ \hat{u} \cap U^+  \cap  w P_J^- w^{-1}  \nonumber \\
 &=& \hat{u} ^{-1} U_I^+ \hat{u}  \cap U_{w P_J^-}.
 \label{stab2}
\eeq
\nod Thus
$\Stab_{U_I^+} \{ \hat{u} w P_J^- \} =\hat{u} U_{w P_J^-} \hat{u} ^{-1} \cap  U_I^+$. As $J$ has finite type, each $
\hat{u} U_{w P_J^-} \hat{u} ^{-1}= \hat{u} U_{w_J}^- \hat{u} ^{-1} \cong U_{w_J}$ by Lemma \ref{stablem}.
  This completes the proof.
\end{proof}

Recall that $G_I$ (\ref{levidef}) has a root group data structure with Weyl group $W_I$.  The positive unipotent subgroup of this
root group data structure is $\hat{U}_I^+$. Applying Theorem \ref{combin} to (\ref{hocoUnew})
 gives the following.

\begin{cor}
Under the assumptions of Lemma \ref{hocoUnewthm}, the canonical map induced by inclusions of subgroups
\beq
\hcl_{w\in \cat{W}_I} BU_w \stackrel{\thicksim}{\longrightarrow} B\hat{U}_I^+
\nonumber
\label{hocoUwhat}
\eeq
\nod is a homotopy equivalence where $\cat{W}_I$ is a poset under the weak Bruhat order (\ref{weakorder}).
\label{weyldchat}
\label{Bruhatorderdecomphat}
\end{cor}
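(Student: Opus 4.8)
The plan is to combine the two tools that precede this statement: the homotopy decomposition of $B\hat U_I^+$ over the Davis poset $\cat{W}_\cS$ supplied by Lemma \ref{hocoUnewthm} applied to the group $G_I$ (with its RGD system, Weyl group $W_I$, positive unipotent $\hat U_I^+$), and the pullback Theorem \ref{combin} applied to the Coxeter group $W_I$. First I would specialize Lemma \ref{hocoUnewthm} in the case $I=\emptyset$ to the group $G_I$ rather than to $K$ itself; this is legitimate because $(G_I,\{U_\alpha\}_{\alpha\in\Phi_I},T)$ is a genuine RGD system (recorded just after \eqref{levidef}) with finite-generator Coxeter system $(W_I,I)$ and positive unipotent subgroup $\hat U_I^+=U^+\cap G_I$. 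Writing $\cat{W}_I$ for the weak-Bruhat poset of $W_I$ and $(\cat{W}_I)_\cS$ for the corresponding Davis poset (the finite-type cosets of $W_I$), Lemma \ref{hocoUnewthm} gives a homotopy equivalence $\hcl_{(\cat{W}_I)_\cS} BU_{w_J}\xrightarrow{\ \sim\ } B\hat U_I^+$, where the value on a coset $wW_J$ is $BU_{w_J}$ for $w_J$ the longest element of $wW_J$, and the structure maps are induced by inclusions of subgroups (via Lemma \ref{stablem}, which identifies the relevant $U^+$-stabilizers as the $U_{w_J}$).

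Next I would invoke Theorem \ref{combin} for the Coxeter group $W_I$: the longest-element functor $L\colon (\cat{W}_I)_\cS\to\cat{W}_I$, $wW_J\mapsto w_J$, pulls back homotopy colimits, so for the particular diagram $D\colon\cat{W}_I\to\cat{Spaces}$ given by $w\mapsto BU_w$ (with structure maps the subgroup inclusions $U_v\hookrightarrow U_w$ for $v\le w$ coming from \eqref{multiply}--\eqref{Uwintersect}), the canonical map
\[
\hcl_{(\cat{W}_I)_\cS} D\circ L\ \longrightarrow\ \hcl_{\cat{W}_I} D
\]
is a weak equivalence. Since $D\circ L$ is exactly the diagram $wW_J\mapsto BU_{w_J}$ appearing in the previous paragraph, composing the two equivalences yields a weak equivalence $\hcl_{\cat{W}_I} BU_w\xrightarrow{\ \sim\ }B\hat U_I^+$. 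A small bookkeeping point to check is that this composite is the map \emph{induced by the inclusions} $U_w\hookrightarrow\hat U_I^+$: both $L$ and the Lemma \ref{hocoUnewthm} equivalence are built entirely from subgroup inclusions and the universal property of hocolim, so the composite is the canonical one, and on $\pi_1$ (via Theorem \ref{svkthm}, since the relevant coset geometries are connected) it restricts to the inclusions $U_w\hookrightarrow\hat U_I^+$ on each object — hence it is the stated canonical map.

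I do not expect a serious obstacle here; the corollary is essentially an immediate consequence of assembling two already-proved results. The only mild care needed is (i) confirming that Lemma \ref{hocoUnewthm} does apply to $G_I$ — i.e. that in the statement "$U^+=U_\emptyset$ of a group with RGD system" one may take the group to be $G_I$ and the ambient RGD system to be $(G_I,\{U_\alpha\}_{\alpha\in\Phi_I},T)$, for which the empty parabolic trivially has a (trivial) Levi decomposition — and (ii) matching notation so that the "$\cS$" attached to $W_I$ really is the poset of finite-type subsets of the generating set $I$ of $W_I$, and that the $U_w$ occurring (defined in \eqref{uwdef} relative to $W$) agree with those defined relative to the sub-RGD system $G_I$ for $w\in W_I$, which follows because $\Theta_w=\Phi^+\cap w\Phi^-\subseteq\Phi_I$ for $w\in W_I$. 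Once these identifications are in place, the result follows by concatenating the equivalence of Lemma \ref{hocoUnewthm} with that of Theorem \ref{combin}.
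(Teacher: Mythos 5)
Your proposal is correct and follows exactly the paper's route: the paper obtains the corollary by applying the $I=\emptyset$ case of Lemma \ref{hocoUnewthm} to the RGD system $(G_I,\{U_\alpha\}_{\alpha\in\Phi_I},T)$ with positive unipotent subgroup $\hat U_I^+$, and then pulling back along the longest-element functor via Theorem \ref{combin} for the Coxeter group $W_I$. Your additional bookkeeping (that the $U_w$ for $w\in W_I$ agree whether computed in $K$ or in $G_I$, and that the composite is the canonical map induced by inclusions) is accurate and only makes explicit what the paper leaves implicit.
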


We are now ready to prove our final lemma for Theorem \ref{hocoUnewthmslick}.

\begin{lem}
The posets $\hat{U}^+_I\cdot \cat{W}_\cS$ and $\hat{U}^+_I\cdot\cat{W}$, defined in Corollary \ref{combinlocalact},  have contractible geometric realizations.
\label{combocontract}
\end{lem}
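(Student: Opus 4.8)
The plan is to reduce the contractibility of both posets, by two applications of the pullback‑of‑homotopy‑colimits machinery of \ref{sec:subdia}, to the contractibility of a coset geometry that is already controlled by Corollary \ref{Bruhatorderdecomphat}.

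First I would feed the constant point‑diagram into Corollary \ref{combinlocalact}: since $\hat{U}^+_I\cdot L\co\hat{U}^+_I\cdot\cat{W}_\cS\to\hat{U}^+_I\cdot\cat{W}$ pulls back homotopy colimits and $\hcl_{\cat{C}}\{\ast\}\simeq|\cat{C}|$, this functor induces a weak equivalence $|\hat{U}^+_I\cdot\cat{W}_\cS|\simeq|\hat{U}^+_I\cdot\cat{W}|$, so it suffices to show $|\hat{U}^+_I\cdot\cat{W}|\simeq\pt$. The same point‑diagram trick applied to Proposition \ref{w(I)prop} — which supplies a functor $\hat{U}^+_I\cdot L_I\co\hat{U}^+_I\cdot\cat{W}\to\hat{U}^+_I\cdot\cat{W}_I$ that pulls back homotopy colimits — further reduces the problem to showing $|\hat{U}^+_I\cdot\cat{W}_I|\simeq\pt$.

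Next I would identify $|\hat{U}^+_I\cdot\cat{W}_I|$ explicitly. By construction (Corollary \ref{combinlocalact} taken with base $\cat{W}_I$) the poset $\hat{U}^+_I\cdot\cat{W}_I$ is the transport category of $X_I\co\cat{W}_I\to\cat{Sets}$, $v\mapsto\hat{U}^+_I/(\hat{U}^+_I\cap U_v)$; and for $v\in W_I$ the roots $\Theta_v=\Phi^+\cap v\Phi^-$ lie in $\Phi^+_I$, so $U_v\le\hat{U}^+_I$ and $X_I(v)=\hat{U}^+_I/U_v$. Using the equivalence $|\cat{Tr}(X_I)|\simeq\hcl_{\cat{W}_I}X_I$ recorded after Proposition \ref{transprop}, $|\hat{U}^+_I\cdot\cat{W}_I|$ is canonically the coset geometry of the cocone $\hcl_{v\in\cat{W}_I}BU_v\to B\hat{U}^+_I$. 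Since $\hat{U}^+_I$ is the positive unipotent subgroup of the root group datum on $G_I$, Corollary \ref{Bruhatorderdecomphat} tells us this cocone map is a homotopy equivalence; both spaces being connected, its homotopy fibre — that coset geometry — is contractible. Chaining the three equivalences yields $|\hat{U}^+_I\cdot\cat{W}_\cS|\simeq|\hat{U}^+_I\cdot\cat{W}|\simeq|\hat{U}^+_I\cdot\cat{W}_I|\simeq\pt$.

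I expect the only step that is not purely formal to be this last identification: recognizing $\hat{U}^+_I\cdot\cat{W}_I$ as the transport category of $v\mapsto\hat{U}^+_I/U_v$ and hence as the coset geometry associated to Corollary \ref{Bruhatorderdecomphat}. The reason for routing through $\cat{W}_I$ (via Proposition \ref{w(I)prop}) rather than working directly over $\cat{W}$ is precisely that for a general $w\in W$ the subgroup $\hat{U}^+_I\cap U_w$ need not be one of the finite‑dimensional unipotent pieces $U_v$, so the coset geometry over $\cat{W}$ is not visibly the one handled by Corollary \ref{Bruhatorderdecomphat}. Everything else is a formal consequence of the pullback criterion (Theorem \ref{termin}) and Theorem \ref{combin} as already packaged in Corollaries \ref{combinlocalact} and \ref{Bruhatorderdecomphat} and Proposition \ref{w(I)prop}.
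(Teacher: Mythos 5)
Your argument is correct and is essentially the paper's own proof: the paper packages the same three reductions (Corollary \ref{combinlocalact}, Proposition \ref{w(I)prop}, and the decomposition of $B\hat{U}_I^+$ from Lemma \ref{hocoUnewthm}/Corollary \ref{Bruhatorderdecomphat}) as a ladder of fibrations over $B\hat{U}_I^+$ whose fibres are exactly the three geometric realizations you chain together. One small correction to your closing remark: by (\ref{Uwintersect}) one has $\hat{U}_I^+\cap U_w=U_{w(I)}$, so this intersection \emph{is} one of the finite-dimensional pieces; the real reason to route through $\cat{W}_I$ is to collapse the redundant indexing over $\cat{W}$ so that the coset geometry becomes literally the one controlled by Corollary \ref{Bruhatorderdecomphat}.
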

\begin{proof}
 Recall the definition of ${w(I)}$ as the longest $v \in W_I$ such that $v \le w$ in $\cat{W}$.
Note that
(\ref{Uwintersect}) implies $U_{w} \cap \hat{U}_I^+ =  \bigcap_{v \in W_I} {U}_v \cap U_{w} = U_{w(I)}$.  Thus, we have a commutative diagram of fibrations over $BU^+_I$ induced by inclusions of subgroups
\beq
\xymatrix{
|\hat{U}_I^+ \cdot \cat{W}_\cS| \ar[d]_{\hat{U}_I^+ \cdot L} \ar[r]&  \hcl_{\cat{W}_\cS}  B(U_{w_J} \cap \hat{U}_I^+) \ar[d]_L \ar@/^/[ddr] & \\
|\hat{U}_I^+ \cdot \cat{W}| \ar[d]_{\hat{U}_I^+ \cdot L_I} \ar[r]&   \hcl_{\cat{W}} B(U_{w} \cap \hat{U}_I^+) \ar[d]_{L_I}  \ar[dr] & \\
|\hat{U}_I^+ \cdot \cat{W}_I| \ar[r]&  \hcl_{\cat{W}_I} BU_{w(I)} \ar[r]  & B \hat{U}_I^+ \\
|\hat{U}_I^+ \cdot (\cat{W}_I)_{\cS \cap I}| \ar[r] \ar[u]^{\hat{U}_I^+ \cdot L}&  \hcl_{(\cat{W}_I)_{\cS \cap I}} BU_{w} \ar[ur] \ar[u]^L & \\
\label{3pullback}} \nonumber
\eeq
\nod with the vertical maps induced by the indicated functors of index categories as defined in \ref{sec:pb}.  In particular, the vertical maps are weak equivalences by Theorem \ref{combin}, Corollary \ref{combinlocalact} and Proposition \ref{w(I)prop}.  The bottom fibration is simply the homotopy decomposition (\ref{hocoUnew}) of Lemma \ref{hocoUnewthm} for the (not necessarily compact) Levi factor $G_I$ which carries a RGD structure
with positive unipotent subgroup $\hat{U}_I^+$. Thus, 
 we have $|\hat{U}_I^+ \cdot \cat{W}_\cS|\simeq |\hat{U}_I^+ \cdot\cat{W}| \simeq |\hat{U}_I^+ \cdot\cat{W}_I| \simeq \pt$ which completes
the proof.
\end{proof}

\begin{proof}[Proof of Theorem \ref{hocoUnewthmslick}]
Note that for discrete Kac-Moody groups the subgroups $U_w$ are unipotent  and
Levi decompositions always exist (see Section \ref{sec:rgd}).
Lemma \ref{hocoUnewthm} implies that all the statements of Theorem \ref{hocoUnewthmslick} follow from Theorem \ref{combin} and Corollary \ref{combinlocalact} except
the claim that $|\hat{U}^+_I\cdot\cat{W}|$ is contractible.  This final claim is shown in Lemma \ref{combocontract}.
\end{proof}

\section{Main Results}
\label{sec:mainresults}

We now can express the classifying space of the unipotent factor of any parabolic subgroup of a discrete Kac-Moody group $BU_I^+(R)$ in terms of a countable collection of classifying spaces of unipotent subgroups.
In particular, when all these unipotent subgroups have vanishing homology, our decompositions imply that the homology of $BU_I^+(R)$ must vanish. This provides sufficient input to express $BK(R)$
as in
Theorem \ref{introhocoBGqfinite}.  For simplicity, 
 we state our results in this section for Kac-Moody groups over fields.

\subsection{Vanishing and simplification away from $p$}
\label{sec:van}

Our main application of the results of \ref{sec:newdecomposition}   is Theorem \ref{vanishthm}.
\begin{proof}[Proof of Theorem \ref{vanishthm}]
  Fix fields $\FF$ and $\LL$ of different characteristics.
  First note that $BU_w(\FF)$ is a $\LL$--homology point since each $U_w(\FF)$ has a normal series of length $l(w)$, i.e. the reduced word length of $w$, with quotient groups isomorphic to $(\FF,+)$.
  Of course, $H_n(\FF, \LL)=0$ for all $n>0$, cf. \cite[Theorem 6.4, p. 123]{Brown}.
   For example, if $\FF=\Fpk$ each $U_w(\Fpk)$ is a $p$--group with $p^{kl(w)}$ elements; compare (\ref{multiply}).

   By the homology spectral sequence for homotopy colimits \cite{BK},  $H_\ast (BU^+(\FF), \LL)$ is
the homology of the poset $\cat{W}_\cS$ underlying the homotopy decomposition (\ref{hocoUnew}).   Since the Davis complex $|\cat{W}_\cS|$ is contractible, $H_n(BU^+(\FF), \LL)=0$ for all $n>0$, completing the proof in the case of $I=\emptyset$. Theorem \ref{hocoUnewthmslick} implies that same proof will work for all $I$
since all $U_{(\hat{u}, w)}(\FF)$ are unipotent and $|\hat{U}_I^+(\FF)\cdot \cat{W}_\cS|\simeq \pt$. 
\end{proof}

An  independent proof of the rank two case of Theorem \ref{vanishthm} appears in \cite{AR}.
Notice that the proof of Lemma \ref{combocontract} brings together the full combinatorial tool kit to show that $|\hat{U}_I^+(\FF)\cdot \cat{W}_\cS|$ has vanishing homology.
  Theorem \ref{KMbar} will follow
from an analog at $q$ of the decomposition of a topological Kac-Moody group  (\ref{hocoBGcomplex}) for Kac-Moody groups over a fields of characteristic $p$.
We now state a precise version of Theorem \ref{introhocoBGqfinite}.

\begin{thm}
Let $K(\FF)$  be a Kac-Moody group over a field of characteristic $c$ with standard parabolic subgroups $P_I(\FF)$ and $G_I(\FF)$ the reductive, Lie Levi component subgroups for all $I\in \cS$.  The canonical map induced by inclusions of subgroups
\beq
\hcl_{I\in \cS}\BGI(\FF) \stackrel{\stackrel{q}{\thicksim}}{\longrightarrow} BK(\FF)
\label{hocoBGq}
\eeq
\nod is a $q$--equivalence for any prime $q \neq c$ and a rational equivalence for $c>0$.
\label{hocoBGqfinite}
\end{thm}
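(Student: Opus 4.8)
The plan is to reduce the theorem to the known homotopy decomposition (\ref{hocoBG}) together with the vanishing Theorem \ref{vanishthm}, by splicing together the two semi-direct product decompositions $P_I^+ \cong G_I \ltimes U_I^+$ along the index category $\cS$. First I would recall that for the discrete $BN$--pair $K(\FF)$ with $B = U^+ T$ one has the homotopy equivalence $\hcl_{I\in \cS}\BPI(\FF) \stackrel{\thicksim}{\longrightarrow} BK(\FF)$ from (\ref{hocoBG}); this is integral and needs no restriction on $q$. So it suffices to show that the map of diagrams $\{BG_I(\FF)\}_{I\in\cS} \to \{BP_I(\FF)\}_{I\in\cS}$ induced by the inclusions $G_I(\FF)\hookrightarrow P_I(\FF)$ is a $q$--equivalence (resp. rational equivalence) objectwise, and then invoke the homology spectral sequence for homotopy colimits \cite{BK} to conclude the same for the homotopy colimits. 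Since the maps $G_I(\FF)\hookrightarrow P_I(\FF)$ are natural in $I$ (the Levi decompositions are compatible with the inclusions $P_I \subset P_{I'}$ for $I \subset I'$), this comparison of diagrams is legitimate.

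The objectwise statement is where Theorem \ref{vanishthm} enters. For each $I\in\cS$, the Levi decomposition (\ref{levidecomp}) gives $P_I^+(\FF) \cong G_I(\FF)\ltimes U_I^+(\FF)$, hence a fibration sequence
\beq
BU_I^+(\FF) \longrightarrow BP_I^+(\FF) \longrightarrow BG_I(\FF).
\nonumber
\eeq
By Theorem \ref{vanishthm}, $\widetilde{H}_\ast(BU_I^+(\FF),\LL)=0$ for any field $\LL$ of characteristic $\neq c$; applying this with $\LL=\Fq$ (for $q\neq c$) and with $\LL = \QQ$ (when $c>0$, so that $\QQ$ has characteristic $0 \neq c$), the Serre spectral sequence of this fibration collapses and shows that $BG_I(\FF)\to BP_I^+(\FF)$ is an $\Fq$--homology isomorphism, resp. a rational homology isomorphism. (One should note the fiber is simply connected or at least that the relevant local coefficient systems are trivial — here $\pi_1 BU_I^+(\FF) = U_I^+(\FF)$ is acted on by $G_I(\FF)$, but since we only need homology with field coefficients in which $\widetilde H_\ast(BU_I^+)$ already vanishes, the spectral sequence argument goes through regardless, the $E_2$--page being concentrated on the base row.) This handles each object of the diagram.

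Finally I would assemble: the homology spectral sequence for a homotopy colimit over $\cS$ has $E_2$--page $H_\ast(\cS; \mathcal{H}_\ast(D_\bullet,\LL))$ converging to $H_\ast(\hcl_\cS D_\bullet,\LL)$, functorial in the diagram $D_\bullet$. Applying this to the natural transformation $\{BG_I(\FF)\} \Rightarrow \{BP_I^+(\FF)\}$ with $\LL$ a field of characteristic $\neq c$, the objectwise $\LL$--homology isomorphism just established induces an isomorphism on $E_2$--pages, hence on abutments, so $\hcl_\cS BG_I(\FF) \to \hcl_\cS BP_I^+(\FF) \simeq BK(\FF)$ is an $\LL$--homology isomorphism. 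Taking $\LL = \Fq$ for $q\neq c$ gives the $q$--equivalence, and $\LL = \QQ$ for $c>0$ gives the rational equivalence, proving (\ref{hocoBGq}). The main obstacle is the bookkeeping needed to guarantee that the Levi decompositions (\ref{levidecomp}) can be chosen compatibly across all of $\cS$ so that $I\mapsto BG_I(\FF)$ and the comparison to $I\mapsto BP_I^+(\FF)$ are honest diagrams and an honest natural transformation — this is essentially the statement that the $G_I(\FF)$ form a subdiagram of the parabolics, which follows from $G_I = P_I^-\cap P_I^+$ being manifestly functorial in $I$, but it deserves to be spelled out; everything else is a routine application of the spectral sequence and of Theorem \ref{vanishthm}.
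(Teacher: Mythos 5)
Your proposal is correct and follows essentially the same route as the paper: the Levi decomposition fibration $BU_I^+(\FF)\to BP_I(\FF)\to BG_I(\FF)$, the vanishing of $\widetilde H_\ast(BU_I^+(\FF),\LL)$ from Theorem \ref{vanishthm} fed into the Serre spectral sequence to make $G_I(\FF)\hookrightarrow P_I(\FF)$ an $\LL$--homology isomorphism (the paper phrases this via the section of the projection $\pi\co P_I\to G_I$), and then the objectwise equivalence of diagrams combined with (\ref{hocoBG}) and the homotopy colimit spectral sequence. Your extra care about the triviality of local coefficients and the compatibility of the Levi decompositions over $\cS$ is sound but not a departure from the paper's argument.
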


\begin{proof}
In this proof all groups mentioned are over a fixed $\FF$ that is suppressed in the notation.   Consider the fibration sequence
\beq
BU_I \longrightarrow \BPI {\longrightarrow} \BGI
\label{hocoBGqfib}
\eeq
\nod arising from the semidirect product decomposition $P_I \cong G_I \ltimes U_I$ (\ref{levidecomp}). By Theorem \ref{vanishthm}, $BU_I(\FF)$ is a $\Fq$--homology point (or $\QQ$--homology point for $q=0$) for all $I \in S$.
 The Serre spectral sequence for the fibration (\ref{hocoBGqfib}) shows $B(P_I \stackrel{\pi}{\longrightarrow} G_I)$ induces a isomorphism on $\Fq$--homology.
The inclusion $G_I \stackrel{\iota}{\longrightarrow} P_I$ is a section of $\pi$ and  must induce the inverse of $H_\ast(B(P_I \stackrel{\pi}{\longrightarrow} G_I), \Fq)$ on homology by naturality.
 Recalling (\ref{hocoBG}) the natural maps induced by inclusion of subgroups
\beq
\hcl_{I\in \cS}\BGI \stackrel{\stackrel{q}{\thicksim}}{\longrightarrow} \hcl_{I\in \cS}\BPI \stackrel{\thicksim}{\longrightarrow} BK
\label{hocoBGqcompose}
\nonumber
\eeq
\nod compose to yield the desired $q$--equivalence.
\end{proof}
\begin{proof}[Proof of Theorem \ref{KMbar}]
In the case at hand, Theorem \ref{hocoBGqfinite} 
 implies
\beq
\hcl_{I\in \cS}\BGI(\Fbar) \stackrel{\stackrel{q}{\thicksim}}{\longrightarrow}  BK(\Fbar)
\label{hocoBGqbar}
\eeq
\nod  where $\Fbar$ is the algebraic closure of ${\Fp}$.
Referring back to the construction Friedlander and Mislin used to produce Theorem \ref{KMbar} for reductive $G$ \cite[Theorem 1.4]{Fbar}, the maps $\BGIfbar {\rightarrow} \BGI$ are induced by the zig-zag of groups
\beq
G_I(\Fbar) {\longleftarrow} G_I(\Witt(\Fp)) {\longrightarrow} G_I(\CC) {\longrightarrow} G_I
\label{zzgroups}
\eeq
\nod where $\Witt(\Fp) \rightarrow \CC$ is a \emph{fixed} choice of embedding of the Witt vectors of  $\Fp$ into $\CC$ and $G_I(R)$ denotes the discrete group over $R$ of the same type as the (topological) complex reductive Lie group $G_I$.  Moreover, the maps of (\ref{zzgroups}) are natural with respect to the maps of group functors $G_I(-) \hookrightarrow G_J(-)$. Taking classifying spaces, we have compatible maps
\beq
 BG_I(\Fbar) {\longleftarrow}  BG_I(\Witt(\Fbar)) {\longrightarrow}  BG_I(\CC) {\longrightarrow} BG_I
 \nonumber
\label{zzclass}
\eeq
\nod
which are all $q$--equivalences.  Localizing at a prime $q$ distinct from $p$, we obtain compatible $\BGIfbarq \stackrel{\stackrel{q}{\thicksim}}{\rightarrow} \BGIq$.  Compatible   $\BGIfbar {\rightarrow} \BGI$ are then
  produced via an arithmetic fibre square \cite{arth}
  and induce a $q$--equivalence
\beq
 \hcl_{I\in \cS}\BGI(\Fbar) \stackrel{\stackrel{q}{\thicksim}}{\rightarrow} \hcl_{I\in \cS}\BGI.
\label{hocoBGqbarfried}
\eeq
\nod
Thus, equations (\ref{hocoBG}--\ref{hocoBGcomplex}),  (\ref{hocoBGq}) and (\ref{hocoBGqbarfried}) induce
\beq
BK(\Fbar) &\stackrel{\thicksim}{\leftarrow}& \hcl_{I\in \cS} BP_I(\Fbar) \stackrel{\stackrel{q}{\thicksim}}{\rightleftarrows} \hcl_{I\in \cS}\BGI(\Fbar) \nonumber \\
&\stackrel{\stackrel{q}{\thicksim}}{\rightarrow}& \hcl_{I\in \cS}\BGI \stackrel{\thicksim}{\rightarrow} BK .
\nonumber
\label{hocoBGqbarThm}
\eeq
\nod Choosing a fixed homotopy inverse for  the arrow pointing to the left,  we obtain the desired map.
\end{proof}
\begin{rem}
 Theorem \ref{KMbar} for reductive $G$ is one instance of the Friedlander--Milnor conjecture \cite{FriMil} in which $BG(\Fbar)$ approximates $BG$ homologically.
  More generally, natural discrete approximations  of $BG$ by $BG(\FF)$ can be extended to discrete approximations of Kac-Moody groups via Theorem \ref{hocoBGqfinite} as in the proof of Theorem \ref{KMbar}.
For example, Morel's \cites{Morel} confirmation of the Friedlander--Milnor conjecture for specific $G$ of small rank extends to
homological approximations of $BK$ by $BK(\FF)$ for any separably closed field $\FF$ whenever $K$ has a cofinal subposet ${\bf C}$ of $\cS$  with the property that $I \in {\bf C}$
implies  $G_I=H \ltimes T $ for some $T=(\CC, \times)^{k}$ and $H \in\{ SL_3, SL_4, SO_5, G_2\}$.
\label{FMc}
\end{rem}

\subsection{Unstable Adams operations for Kac-Moody groups}
\label{sec:compare}

For $q\neq p$, we will construct a local unstable Adams operation $\psk : \BKq {\rightarrow} \BKq$ compatible with the Frobenius map.  When $W$ has no element of order $p$, we can
assemble the local Adams maps via the arithmetic fibre square to obtain a global unstable Adams operation $\psk : BK {\rightarrow} BK$.

\begin{proof}[Proof of Theorem \ref{pskthm}]
 Let us first construct $\psk : \BKq {\rightarrow} \BKq$ for $q\neq p$ by noting that, localizing of the map (\ref{hocoBGqbarfried}),  we have homotopy equivalences
\beq
(\hcl_{I\in \cS}\BGIfbarq)\qcp \stackrel{\thicksim}{\longrightarrow} (\hcl_{I\in \cS}\BGIq)\qcp \stackrel{\thicksim}{\longrightarrow} \BKq .
\nonumber
\label{hocoBGlocal}
\eeq
\nod  As in the proof of Theorem  \ref{KMbar}, we have compatible $\BGIfbarq \stackrel{\thicksim}{\rightarrow} \BGIq$ induced by the zig-zag of groups
\beq
G_I(\Fbar) {\longleftarrow} G_I(\Witt(\Fp)) {\longrightarrow} G_I(\CC) {\longrightarrow} G_I
\nonumber
\label{zzgroups2}
\eeq
\nod where $\Witt(\Fp) \rightarrow \CC$ is a fixed embedding of the Witt vectors of  $\Fp$ into $\CC$ and $G_I(R)$
 denotes the discrete algebraic group over $R$ of the same type as the (topological) complex reductive Lie group $G_I$.
   From the naturality of the group functors, we have explicit topological models so that
\beq
\xymatrix{
\BGIfbar \ar[d]^{B(i)} \ar[r]^{BG_I(\varphi^k)}&  \BGIfbar \ar[d]^{B(i)}  \\
\BGJfbar \ar[r]^{BG_I(\varphi^k)}& \BGJfbar
\nonumber
\label{actionholim}}
\eeq
\nod commutes for all $I \subset J$ in $\cS$.   Localizing, we have a map of diagrams and $\psk:=(\varphi^k)\qcp$ extends to $\hcl_{I\in \cS}\BGIfbarq \simeq BK(\Fbar)\qcp \simeq \BKq$.

When $W$ has no elements of order $p$, we will work one prime at a time and assemble the maps using an arithmetic fibre square.  We have already constructed $\psk : \BKq {\rightarrow} \BKq$ for $q\neq p$. At $p$, we have
a homotopy equivalence
\beq
BN\pcp \stackrel{\thicksim}{\longrightarrow} BK\pcp
\nonumber
\eeq
where $N$ is the normalizer of the maximal torus and  this map is induced by the inclusion of groups \cite{nituthesis}. Kumar \cite{kumar}*{6.1.8}  presents $N$ as being generated by $T$ and $\{s_1, \ldots s_n\}$  so that under the
projection  $\pi:N \rightarrow W$ $s_i$ maps to a standard generator of $W$. Thus, we can attempt to define $\theta: N \rightarrow N$ in terms of these generators and need only check that Kumar's relations are satisfied to
obtain a group homeomorphism.  To get an unstable Adams operation we choose $t \mapsto t^{p^k}$  and $s_i \mapsto s_i$. Note that here $p$ is odd and this is needed to verify Kumar's relations.

Because $BK$ is a simply connected $CW$--complex \cite{nitutkm}, it is given as a homotopy pullback
\beq
\xymatrix{
BK \simeq BK^{{\wedge}}_\ZZ \ar[d]^{(-)^{{\wedge}}_\QQ} \ar[r]^{\prod (-)^{{\wedge}}_q}&  \prod \BKq \ar[d]^{(-)^{{\wedge}}_\QQ}  \\
BK^{{\wedge}}_\QQ  \ar[r]^{\prod (-)^{{\wedge}}_q}& \prod (\BKq)^{{\wedge}}_\QQ
\label{fibresquare}}
\eeq
\nod known as the arithmetic fibre square where $(-)^{{\wedge}}_\ZZ$ and ${(-)^{{\wedge}}_\QQ}$ denote localization with respect to $\ZZ$ and $\QQ$ homology, respectively \cite{arth}.
Now, by (\ref{hocoBGcomplex}) we have homotopy equivalence
\beq
BK^{{\wedge}}_\QQ \simeq (\hcl_{I\in \cS} \BGI^{{\wedge}}_\QQ)^{{\wedge}}_\QQ \simeq (\hcl_{I\in \cS} \prod K(2m_i,\QQ))^{{\wedge}}_\QQ
\label{QQlocal}
\eeq
\nod where the $m_i$ vary for different $\BGI$  and $K(n, \QQ)$ denotes the $n^{th}$ Eilenberg-MacLane space \cite{basics}.
 The homomorphism  $t \mapsto t^{p^k}$ of $S^1$ induces compatible
self maps of the $K(2m_i, \QQ)\simeq B^{2m_i-1}(S^1)\ratcp$ appearing in (\ref{QQlocal}). We may now define the desired map with (\ref{fibresquare}).
\end{proof}

\begin{rem}
In general, twisted Adams operations beyond those constructed here are expected. See \cite{BKtoBK} for rank two examples.
For instance, whenever $W \cong (\ZZ/2\ZZ)^{\ast n}$ the constructions of \cite{BKtoBK} generalize, but one must take care to check compatibility for complicated
$W$.
\label{Adamsremark}
\end{rem}

\begin{quest}
Are the unstable Adams operations constructed in Theorem \ref{pskthm} unique, up to homotopy, among
maps that restrict to the self-map $B(t \mapsto t^p)$ of ${BT}$?
\label{questuni}
\end{quest}

Now, that we have $\psk : \BKq {\rightarrow} \BKq$ let us note that when comparing $(\BKq)^{h\psk}$ and $\BKfq$
there is a natural map
\beq
\BKfq \simeq (\hcl_{I\in \cS} \BGIfq )\qcp \longrightarrow (\BKq)^{h\psk}
\label{mainmap2}
\eeq
\nod arising  from the diagram $D:\ZZ \times \cS \rightarrow Spaces$  via $ ( \bullet, W_I) \mapsto \BGIq$ on objects
and $(n, W_I \hookrightarrow W_J) \mapsto (\psk)^n B(G_I \hookrightarrow G_J)=B(G_I \hookrightarrow G_J)(\psk)^n $ on morphisms.  In particular, (\ref{mainmap2}) is the localization of the canonical
\beq
\hcl_\cS (\hl_\ZZ \BGIfq ) \longrightarrow  \hl_\ZZ (\hcl_{\cS} \BGIfq ).
\label{mainmap2canon}
\eeq
Generally, we do not expect homotopy limits and colimits to commute.  Our calculations in the next section that show that they rarely do in rank two examples.

\begin{quest}
What is the structure of the homotopy fibre of (\ref{mainmap2canon})?
\label{questmainmain}
\end{quest}

\section{Cohomology Calculations}
\label{sec:compmain}

Theorem \ref{hocoBGqfinite} allows us to study $BK(\Fpk)$ 
 in terms of {\em finite} reductive algebraic group classifying spaces, $BG_I(\Fpk)$ for $I \in \cS$, after localizing at a prime $q$ distinct from $p$.
Furthermore, \cite{Friedbook} reduces the study of $BG_I(\Fpk)\qcp$ to understanding homotopy fixed points $\BGIqfix$ under stable Adams operations, $\psk_I$. Thus, in principle, only the cohomology of compact Lie group classifying spaces is need as input data to compute $H^{\ast}(BK(\Fpk), \Fq)$.

In this section, we begin such calculations. Recent work by Kishimoto and Kono \cite{twisted} facilitates the determination the induced maps on cohomology  between $\BGIqfix$ as $I$ varies.
We then compare the results with $H^{\ast}(\BKqfix, \Fq)$ for rank two, infinite  dimensional  Kac-Moody groups.
  We close with explicit calculations of $H^\ast(BU^+(\Fp),\Fp)$
in specific cases where $W \cong (\ZZ/2\ZZ)^{\ast n}$.

\subsection{Restriction to Rank 2}
\label{subsec:r2}

To compare $(\BKq)^{h\psk}$ and $\BKfq$, we will partially compute their $\Fq$--cohomology rings.  These computations will become more tractable by restricting to simply connected rank 2 Kac-Moody
groups and $q$ odd. Notably $H^{\ast}(BK, \Fq)$ and its $\psk$--action can be determined explicitly. In the rank 2, non-Lie, case $\BKfq \simeq \hcl_{I\in \cS} (\BGIq)^{h\psk_I}$ is simply a homotopy pushout.  To compute cohomology, we use the Mayer-Vietoris sequence.
  We will also restrict to $q$ odd, so that for $I\in \cS$
\beq
\HBGIq \cong \HBTq^{W_I}
\label{weylfix}
\eeq
\nod with the restriction map from $\HBGIq$ to $\HBTq$ inducing this isomorphism \cite{basics}. The determination of $\HBGIqfix$ for $I\in \cS$ will occur in \ref{sec:levical}; this subsection will investigate
$H^{\ast}(\BKqfix, \Fq)$.

In the simply connected rank 2  case,  we have
 \cite{nituthesis,rank2mv}
\beq
H^{\ast}(BK, \Fq) = \Fq[x_4, x_{2l}] \otimes \Lambda(x_{2l+1})
\label{rank2coho}
\eeq
\nod as a ring where $\Lambda$ denotes an exterior algebra and  $l:=l(\{a,b\},q)\ge 2$ is a positive integer depending on $q$ and the generalized Cartan matrix for $K$, i.e. some non-singular $2 \times 2$ matrix given by
\beq
A=\left[\begin{array}{cc} 2 & -a \\ -b & 2\end{array}\right]
\eeq
\nod for $a$ and $b$ positive integers such that $ab \ge 4$.  An explicit description of $l$ is given in \cite{nituthesis}.
Notice non-singularity implies $ab>4$ and each $A$ is associated with one simply connected K.
 Work in  \cite{rank2mv} will determine the map $\psk$ induces on $\Fq$--cohomology.

\begin{prop}
For $K$ a rank 2, infinite dimensional complex Kac-Moody group and $q$ odd, $\psk$ acts on $\HBKq$ (\ref{rank2coho}) via
$(x_4, x_{2l}, x_{2l+1}) \mapsto (p^{2k}x_4 , p^{lk}x_{2l}, p^{lk}x_{2l+1})$.
\label{kmacts}
\end{prop}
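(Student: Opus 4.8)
The plan is to combine the explicit ring structure (\ref{rank2coho}) with the naturality of the rank $2$ Mayer--Vietoris presentation of $BK$ under the self-map $\psi^k$. Since $W$ is infinite of rank $2$, (\ref{kmweyl}) forces $m_{12}=\infty$, so $W$ is infinite dihedral and $\cS=\{\emptyset,\{1\},\{2\}\}$ with $G_\emptyset=T$; hence (\ref{hocoBGcomplex}) presents $BK$ as the homotopy pushout of $BG_1\leftarrow BT\rightarrow BG_2$. By the construction in \ref{sec:compare}, $\psi^k$ arises as $\hcl_{I\in\cS}\psi^k_I$, where $\psi^k_I$ is the unstable Adams operation on the reductive Lie group $G_I$ compatible with the Frobenius power $\varphi^k$; thus $\psi^k$ restricts to $\psi^k_I$ on each $BG_I$, and in particular to $B(t\mapsto t^{p^k})$ on $BT=BG_\emptyset$. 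Consequently $\psi^k$ acts on the entire Mayer--Vietoris long exact sequence of this pushout, compatibly with the $B(t\mapsto t^{p^k})$-action on $H^\ast(BT,\Fq)$; since $(-)\qcp$ is an $\Fq$-homology equivalence we read this action off on $H^\ast(BK,\Fq)\cong H^\ast(\BKq,\Fq)$.

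First I treat the polynomial generators $x_4$ and $x_{2l}$, which lie in even degrees. As $q$ is odd, $H^\ast(BT,\Fq)$ is concentrated in even degrees, hence so is $H^\ast(BG_i,\Fq)$ by (\ref{weylfix}); therefore the Mayer--Vietoris boundary maps vanish in every even total degree. Since the two edges $BT\to BG_i\to BK$ of the pushout square agree, this forces the restriction $H^{2j}(BK,\Fq)\to H^{2j}(BT,\Fq)$ to be injective for every $j$, and it is $\psi^k$-equivariant by (\ref{Adamsdef}). Now $B(t\mapsto t^{p^k})$ multiplies every degree $2$ class of the polynomial algebra $H^\ast(BT,\Fq)$ by $p^k$, hence multiplies $H^{2j}(BT,\Fq)$ by $p^{kj}$; by injectivity of the restriction, $\psi^k$ multiplies $H^{2j}(BK,\Fq)$ by $p^{kj}$ as well. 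Taking $j=2$ and $j=l$ yields $\psi^k(x_4)=p^{2k}x_4$ and $\psi^k(x_{2l})=p^{lk}x_{2l}$.

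It remains to handle the exterior generator $x_{2l+1}$, which by (\ref{rank2coho}) spans the one-dimensional group $H^{2l+1}(BK,\Fq)$, so $\psi^k$ acts on it by a scalar; this is the delicate point, because $x_{2l+1}$ restricts to zero on $BT$ and so cannot be detected on the maximal torus. Instead, since $H^{2l+1}(BG_i,\Fq)=0$, the Mayer--Vietoris boundary $\delta\colon H^{2l}(BT,\Fq)\to H^{2l+1}(BK,\Fq)$ is surjective, and it is $\psi^k$-equivariant by the naturality of Mayer--Vietoris with respect to the self-map of the pushout square described above --- which is exactly what the construction of $\psi^k$ in \ref{sec:compare} supplies. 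As $\psi^k$ acts on $H^{2l}(BT,\Fq)$ by $p^{lk}$ (the $j=l$ case above), surjectivity of $\delta$ forces $\psi^k$ to act by $p^{lk}$ on $H^{2l+1}(BK,\Fq)$, i.e. $\psi^k(x_{2l+1})=p^{lk}x_{2l+1}$, which finishes the proof. All remaining input --- the action of $B(t\mapsto t^{p^k})$ on $H^\ast(BT,\Fq)$ and the injectivity of restriction to the torus in even degrees --- is immediate from (\ref{weylfix})--(\ref{rank2coho}).
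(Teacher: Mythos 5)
Your proposal is correct and follows essentially the same route as the paper: both use the Mayer--Vietoris sequence of the pushout $BG_1\leftarrow BT\rightarrow BG_2$, detect the even generators by the ($\psi^k$-equivariant) injective restriction to $H^\ast(BT,\Fq)$, and obtain the action on $x_{2l+1}$ from its description as the image of a degree $2l$ class under the connecting homomorphism. The only cosmetic difference is that you justify the equivariance from the homotopy colimit construction of $\psi^k$ in \ref{sec:compare}, where the paper cites \cite{lieclass} for compatibility with restriction to the torus.
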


\begin{proof}
Because $\HBGIq$ is concentrated in even degrees for $I\in \cS$, the Mayer-Vietoris sequence
associated to the homotopy pushout presentation of $BK$ (\ref{hocoBGcomplex})
 reduces to an exact sequence
\beq
0 \rightarrow \HBGevenq &\rightarrow& \HBGoneq \oplus  \HBGtwoq \rightarrow \HBTq   \nonumber \\
&\rightarrow& \HBGoddq \rightarrow 0
\label{mvkm}
\eeq
where $\HBGevenq = \HBGoneq \cap \HBGtwoq= \Fq[x_4, x_{2l}]$ and $\HBGoddq = \langle x_{2l+1} \rangle\HBGevenq$ for $x_{2l+1}$ the image of a homogeneous degree $2l$
class under the connecting homomorphism \cite{rank2mv}.  The $k^{th}$ unstable Adams operation $\psk$ acts on $\HBTq$ via multiplication by $p^k$ on generators,
 and commutes with the restriction (\ref{weylfix}) $\HBGIq \rightarrow \HBTq$ \cite{lieclass}.
\end{proof}

  We are presently unable to fully compute $H^{\ast}(\BKqfix, \Fq)$ for all rank 2 Kac-Moody groups. However, there is sufficient information
at $\Etwo$ in the Eilenberg-Moore spectral sequence associated to $H^{\ast}(\BKqfix, \Fq)$ to determine that in most rank 2 cases $H^{\ast}(BK(\Fpk), \Fq) \neq H^{\ast}(\BKqfix, \Fq)$, in contrast to the Lie case.

\begin{thm}  Consider the Eilenberg-Moore spectral sequence (EMSS) for the homotopy pullback of the diagram
\beq
\xymatrix{
&\BKq\ar[d]^{\De}\\
\BKq \ar[r]^-{1 \times \psk \circ \De}&(BK \times BK)\qcp}
\label{Gpullback}
\eeq
\nod converging to $\HBKqfix$ where $\psk$ is the $k^{th}$ unstable Adams operation.  For $q$ odd,  $\Etwo$, as a $\Fq$--algebra, is given in Table \ref{tablekmfixedthm}.
\label{kmfixedthm}
\end{thm}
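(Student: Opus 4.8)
The plan is to feed the homotopy pullback of (\ref{Gpullback}) --- which computes $\BKqfix$ --- into the Eilenberg--Moore spectral sequence and to identify its $E_2$--page with a $\mathrm{Tor}$ that can be computed by hand. Since the base $(BK\times BK)\qcp$ is simply connected (recall $BK$ is a simply connected $CW$--complex \cite{nitutkm}), the Eilenberg--Moore theorem applies and gives
\[
\Etwo\;\cong\;\mathrm{Tor}_{H^{\ast}((BK\times BK)\qcp,\Fq)}\bigl(H^{\ast}(\BKq,\Fq),\,H^{\ast}(\BKq,\Fq)\bigr).
\]
The three cohomology rings are computed at once: $\Fq$--localisation preserves $\Fq$--cohomology and the K\"unneth theorem is available (finite type), so with $R:=\HBKq=\Fq[x_4,x_{2l}]\otimes\Lambda(x_{2l+1})$ as in (\ref{rank2coho}) one has $H^{\ast}(\BKq,\Fq)=R$ and $H^{\ast}((BK\times BK)\qcp,\Fq)=R\otimes R$. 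The left factor is an $R\otimes R$--module via the multiplication map $\Delta^{\ast}\colon R\otimes R\to R$; the right factor is a module via $\bigl((\mathrm{id}\times\psk)\circ\Delta\bigr)^{\ast}=\Delta^{\ast}\circ(\mathrm{id}\otimes(\psk)^{\ast})$, so by Proposition \ref{kmacts} the generator $1\otimes x_4$ acts as multiplication by $p^{2k}x_4$ and each of $1\otimes x_{2l},\,1\otimes x_{2l+1}$ as multiplication by $p^{lk}$ times the corresponding class. Denote these two module structures by $R_m$ and $R_\psi$.

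Everything then reduces to the purely algebraic computation of $T:=\mathrm{Tor}_{R\otimes R}(R_m,R_\psi)$ as a bigraded $\Fq$--algebra. Writing $R=P\otimes E$ with $P=\Fq[x_4,x_{2l}]$ and $E=\Lambda(x_{2l+1})$, both module structures are diagonal for this decomposition (immediate from Proposition \ref{kmacts}) and $R\otimes R=(P\otimes P)\otimes(E\otimes E)$; the K\"unneth theorem over a field then gives a natural isomorphism of algebras $T\cong\mathrm{Tor}_{P\otimes P}(P_m,P_\psi)\otimes\mathrm{Tor}_{E\otimes E}(E_m,E_\psi)$.

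For the polynomial factor the elements $x_i\otimes 1-1\otimes x_i$ ($i=4,2l$) form a regular sequence in $P\otimes P$, so the Koszul complex on them is a free resolution of $P_m$; tensoring with $P_\psi$ replaces the $i$--th differential by multiplication by $(1-\lambda_i)x_i$ on $P$, with $\lambda_4=p^{2k}$ and $\lambda_{2l}=p^{lk}$. Hence if $\lambda_i\not\equiv 1\pmod q$ that Koszul variable contracts, while if $\lambda_i\equiv 1\pmod q$ the differential vanishes and one retains the polynomial generator $x_i$ and gains an exterior Tor--class of total degree $|x_i|-1$. The exterior factor is the delicate one: the element $v:=x_{2l+1}\otimes 1-1\otimes x_{2l+1}$ generating $\ker(E\otimes E\xrightarrow{m}E)$ is a \emph{zero--divisor} (with $q$ odd one checks $v^{2}=0$ and $v\cdot(x_{2l+1}\otimes 1)(1\otimes x_{2l+1})=0$), so the Koszul complex on $v$ is not acyclic; instead a short computation shows $\mathrm{Ann}_{E\otimes E}(v)=(v)$, which yields the periodic free resolution $\cdots\xrightarrow{\,v\,}E\otimes E\xrightarrow{\,v\,}E\otimes E\xrightarrow{m}E\to 0$. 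Tensoring with $E_\psi$ replaces each $v$ by multiplication by $(1-p^{lk})x_{2l+1}$: if $p^{lk}\not\equiv 1\pmod q$ the complex is exact past degree $0$ and $\mathrm{Tor}_{E\otimes E}(E_m,E_\psi)=\Fq$, while if $p^{lk}\equiv 1\pmod q$ every differential vanishes and one obtains $\Lambda(x_{2l+1})$ tensored with a divided--power (Tate) algebra on a class of internal degree $2l+1$ and homological degree $1$, hence of total degree $2l$. Multiplying the two factors across the four possibilities for the pair of congruences $p^{2k}\equiv 1$, $p^{lk}\equiv 1\pmod q$ produces the entries of Table \ref{tablekmfixedthm}.

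The main obstacle is the exterior factor: one must notice that the naive Koszul resolution fails, construct the periodic (Tate) resolution from $\mathrm{Ann}(v)=(v)$, and keep internal versus homological degrees straight so that the periodicity class lands in total degree $2l$; this is also precisely where the hypothesis ``$q$ odd'' enters, since at $q=2$ the graded signs degenerate, $v^{2}\neq 0$, and the structure of $\mathrm{Tor}_{E\otimes E}(E_m,E_\psi)$ changes. A secondary bookkeeping point is to pin down how the two congruence conditions interact: as $\lambda_4=(p^{k})^{2}$ and $\lambda_{2l}=(p^{k})^{l}$ with $l\ge 2$, the order of $p^{k}$ in $(\ZZ/q)^{\times}$ governs both (for instance, when $l$ is even, $p^{2k}\equiv 1$ forces $p^{lk}\equiv 1$), which determines exactly which rows of Table \ref{tablekmfixedthm} can occur.
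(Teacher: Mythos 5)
Your proposal is correct and arrives at exactly the entries of Table \ref{tablekmfixedthm}; the opening moves (identifying $\Etwo$ with $\mathrm{Tor}_{\HBKq\otimes\HBKq}(\HBKq,\HBKq)$ and reading off the two module structures from Proposition \ref{kmacts}) coincide with the paper's. Where you diverge is in the algebra: the paper applies the change--of--rings isomorphism (Proposition \ref{cofr}) twice --- first to replace the base by the ``difference'' subalgebra $\Fq[z_4,z_{2l}]\otimes\Lambda(z_{2l+1})$ with $z_i=y_i-x_i$, then to strip out each generator whose scalar $\lambda_i-1$ is a unit mod $q$ --- and then quotes the standard Koszul--Tate complex $\Lambda(s^{-1}X^{\mathrm{even}})\otimes\Gamma(s^{-1}X^{\mathrm{odd}})$ for a trivial module over a free graded--commutative algebra. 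You instead split $\mathrm{Tor}$ by a K\"unneth isomorphism into a polynomial and an exterior factor and build explicit resolutions: the Koszul complex on the regular sequence $x_i\otimes1-1\otimes x_i$ for the polynomial part, and the periodic Tate resolution coming from $\mathrm{Ann}(v)=(v)$ for the exterior part. The two routes are equivalent, and your hands--on construction of the periodic resolution makes visible both where the divided--power algebra $\Gamma(x_{2l})$ comes from and exactly where the hypothesis that $q$ is odd is used (it guarantees $v^2=0$, hence the periodicity), which the paper leaves implicit in its appeal to freeness of $\Lambda(z_{2l+1})$ as a graded--commutative algebra. Your closing observation that $\lambda_4=(p^{k})^{2}$ and $\lambda_{2l}=(p^{k})^{l}$ constrain which rows of the table can actually occur is a useful supplement not made explicit in the paper, though it is not needed for the statement.
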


\begin{table}
\caption{ The $\Fq$--algebra structure for $\Etwo$ converging to $\HBKqfix$  with bidegrees $|x_i|=(-1, i+1)$, $|\gamma_n(x_i)|=(-n, (i+1)n)$, and $|s_i|=(0, i)$. }
\begin{center}
\begin{tabular}{ |l|p{4cm}|p{4cm}| }
\hline
 & $p^{2k} = 1 (\modd ~q)$ &  $p^{2k} \neq 1 (\modd ~q)$ \\
\hline
$p^{kl} = 1 (\modd ~q)$ & $ \Lambda (x_3, x_{2l-1}) \otimes \Gamma(x_{2l}) \otimes \Fq[s_4, s_{2l}] \otimes \Lambda(s_{2l+1})  $ & $ {\Lambda(x_{2l-1}) \otimes \Gamma(x_{2l})} \otimes { \Fq[s_{2l}] \otimes \Lambda(s_{2l+1})}$  \\
\hline
$p^{kl} \neq 1 (\modd ~q)$ & $ {\Lambda(x_{3})} \otimes \Fq[s_{4}] $ &   $\Fq$ \\
\hline
\end{tabular}
\end{center}
\label{tablekmfixedthm}
\end{table}
\begin{proof}
 The $\Etwo$--page for the cohomological EMSS for (\ref{Gpullback}) is given by
\beq
\Tor_{\HBKq \otimes \HBKq} (\HBKq, \HBKq)   \label{torbg} \\
 \cong \Tor_{\Fq[x_4, y_4, x_{2l}, y_{2l}]\otimes \Lambda(x_{2l+1}, y_{2l+1})} (\Fq[s_4, s_{2l}] \otimes \Lambda(s_{2l+1}), \Fq[t_4, t_{2l}] \otimes \Lambda(t_{2l+1})) \nonumber
\eeq
\nod Here left copy of $\HBKq$ a $\HBKq \otimes \HBKq$--module via ${1 \times \psk \circ \De}$ which is given by the ring homomorphism
$
(y_4,  y_{2l}, y_{2l+1}) \mapsto (p^{2k}s_4, p^{lk}s_{2l},  p^{lk}s_{2l+1})
$
 and  $x_i \mapsto t_i$ on generators by Proposition \ref{kmacts}. Of course, the $\HBKq \otimes \HBKq$--module structure on the right $\HBKq$ is given by $y_i, x_i \mapsto t_i$.

To simplify $\Etwo$ we will employ a change of ring isomorphism. 
\begin{propout} [Change of Rings \cite{users} p.  280]
Let $A$, $B$, and $C$ be $k$--algebras over a field $k$.  Then,
\beq
  Tor^n_A (M,N)= Tor^n_C (N,L)= 0
\eeq
\nod for all $n>0$ implies
\beq
  Tor^n_{A\otimes B} (M,N \otimes_C L) \cong Tor^n_{B\otimes C} (M \otimes_A N,L)
  \label{mc}
\eeq
 where $M$,  $N$ and $L$ have the appropriate module structures so that (\ref{mc}) is well defined.
 \label{cofr}
\end{propout}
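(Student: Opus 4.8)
The plan is to realise both $\mathrm{Tor}$ groups as the homology of a single first--quadrant double complex and then collapse its two spectral sequences, each against one of the two vanishing hypotheses. First I would pin down the implicit module data: $M$ is a right $A\otimes B$--module, $N$ a left $A$--module and right $C$--module, and $L$ a left $B\otimes C$--module, so that $N\otimes_C L$ is a left $A\otimes B$--module, $M\otimes_A N$ a right $B\otimes C$--module, and all four $\mathrm{Tor}$ groups are defined. Then I would choose a projective resolution $P_\bullet\to M$ over $A\otimes B$ and a projective resolution $R_\bullet\to L$ over $B\otimes C$, and form the double complex $\mathcal{D}_{i,j}:=(P_i\otimes_A N)\otimes_{B\otimes C}R_j$. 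Two elementary observations drive everything. (i) Since every $k$--algebra is free as a $k$--module, $A\otimes B$ is free over $A$ and $B\otimes C$ is free over $C$; hence $P_\bullet$ is simultaneously an $A$--projective resolution of $M$, $R_\bullet$ is a $C$--projective resolution of $L$, and applying $(-)\otimes_A N$ to a free $A\otimes B$--module produces a direct sum of copies of $B\otimes_k N$ as a right $B\otimes C$--module. (ii) There is a natural isomorphism of complexes $(P_\bullet\otimes_A N)\otimes_{B\otimes C}L\cong P_\bullet\otimes_{A\otimes B}(N\otimes_C L)$, which I would check by comparing generators.

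Next I would run the two spectral sequences of $\mathrm{Tot}(\mathcal{D})$. Taking homology first along the $R$--direction yields $\mathrm{Tor}^{B\otimes C}_\ast(P_i\otimes_A N,L)$; by observation (i) this is a direct summand of copies of $\mathrm{Tor}^{B\otimes C}_\ast(B\otimes_k N,L)$, and a K\"unneth--type reduction (resolve $N$ over $C$ by $S_\bullet$, note $B\otimes_k S_\bullet$ is a $B\otimes C$--projective resolution of $B\otimes_k N$, and use $(B\otimes_k S_\bullet)\otimes_{B\otimes C}L\cong S_\bullet\otimes_C L$) identifies it with $\mathrm{Tor}^C_\ast(N,L)$, which vanishes in positive degrees by hypothesis. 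So this spectral sequence degenerates to the row $j=0$, and by observation (ii) $H_\ast\mathrm{Tot}(\mathcal{D})\cong H_\ast\bigl(P_\bullet\otimes_{A\otimes B}(N\otimes_C L)\bigr)=\mathrm{Tor}^{A\otimes B}_\ast(M,N\otimes_C L)$. Taking homology first along the $P$--direction instead: each $R_j$ is flat over $B\otimes C$, so this gives $\mathrm{Tor}^A_\ast(M,N)\otimes_{B\otimes C}R_j$, which vanishes for $\ast>0$ by the other hypothesis and equals $(M\otimes_A N)\otimes_{B\otimes C}R_j$ for $\ast=0$; hence this spectral sequence degenerates to the column $i=0$, giving $H_\ast\mathrm{Tot}(\mathcal{D})\cong\mathrm{Tor}^{B\otimes C}_\ast(M\otimes_A N,L)$. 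Comparing the two collapsed spectral sequences produces the asserted isomorphism.

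I expect the one real obstacle to be the reduction $\mathrm{Tor}^{B\otimes C}_\ast(P_i\otimes_A N,L)\cong\bigoplus\mathrm{Tor}^C_\ast(N,L)$: this is precisely where the hypothesis $\mathrm{Tor}^C_{>0}(N,L)=0$ is consumed, and it relies on the observation that $(-)\otimes_A N$ sends a free $A\otimes B$--module to a sum of copies of $B\otimes_k N$ together with the K\"unneth collapse above; the remainder is routine double--complex bookkeeping. Finally I would remark that every identification used above is natural, so beginning from differential graded algebra resolutions---legitimate in the setting of Theorem \ref{kmfixedthm}, where $A$, $B$, $C$ and the modules are graded--commutative algebras---makes the isomorphism multiplicative, which is the form actually invoked in that proof.
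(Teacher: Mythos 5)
The paper offers no proof of this proposition: it is quoted as a black box from \cite{users}*{p.\ 280} inside the proof of Theorem \ref{kmfixedthm}, so there is no in-paper argument to compare against. Your proof is correct and is essentially the standard one for this change-of-rings isomorphism: form the double complex $(P_i\otimes_A N)\otimes_{B\otimes C}R_j$ from an $A\otimes B$--projective resolution $P_\bullet\to M$ and a $B\otimes C$--projective resolution $R_\bullet\to L$, and collapse its two spectral sequences against the two vanishing hypotheses. The two points that actually need checking are both handled correctly: first, because $k$ is a field, $A\otimes B$ is free over $A$ and $B\otimes C$ is free over $C$, so $P_\bullet$ restricts to an $A$--projective resolution of $M$ (whence the column collapse consumes $\mathrm{Tor}^A_{>0}(M,N)=0$) and $R_\bullet$ to a $C$--projective resolution of $L$; second, the identification $\mathrm{Tor}^{B\otimes C}_{\ast}(B\otimes_k N,L)\cong \mathrm{Tor}^C_{\ast}(N,L)$ via the resolution $B\otimes_k S_\bullet$ is exactly where $\mathrm{Tor}^C_{>0}(N,L)=0$ is consumed, and your verification that $(B\otimes_k S_\bullet)\otimes_{B\otimes C}L\cong S_\bullet\otimes_C L$ is the correct bookkeeping. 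Your closing observation about naturality and multiplicativity is also apposite, since the proof of Theorem \ref{kmfixedthm} applies the isomorphism repeatedly to the $E_2$--term as a map of $\Fq$--algebras; this point is implicit in the paper's use of the citation but is worth making explicit as you do.
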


Let us use this change of ring isomorphisms with
\beq
(A,B,C) &=& (\Fq[x_4 , x_{2l}]\otimes \Lambda(x_{2l+1}) , \Fq[y_4- x_4 , y_{2l}- x_{2l}] \otimes \Lambda(y_{2l+1}- x_{2l+1}), \Fq), \nonumber \\
(M,N,L) &=& ( \Fq[s_4 , s_{2l}] \otimes \Lambda( s_{2l+1}),\Fq[t_4 , t_{2l}]\otimes \Lambda(t_{2l+1}, \Fq ) .\nonumber
\eeq
\nod
  This  gives
\beq
\Etwo \cong \Tor_{\Fq[z_4, z_{2l}] \otimes \Lambda(z_{2l+1})}  (\Fq[s_4, s_{2l}] \otimes \Lambda(s_{2l+1}), \Fq)
\eeq
where $z_i= y_i-x_i$
 acts via $(z_4, z_{2l}, z_{2l-1}) \mapsto  ((p^{2k}-1) s_4,(p^{lk}-1)s_{2l}, (p^{lk}-1)s_{2l+1})$.  We may employ Proposition \ref{cofr} further if $p^{2k}-1$ or $p^{lk}-1$ is nonzero modulo $q$.  In this way, we obtain Table \ref{E2tor}.

\begin{table}
\caption{$\Etwo$ converging to $\HBKqfix$ in terms of $\Tor$ where $z_i= y_i-x_i$ for $y_i$ and $x_i$.}
\begin{center}
\begin{tabular}{ |l|l| }
\hline
$( mod ~q)$ & $\Etwo$   \\
\hline
$p^{kl} =1$ and $p^{2k}=1$ & $ \Tor_{\Fq[z_4, z_{2l}] \otimes \Lambda(z_{2l+1})} {(\Fq[s_4, s_{2l}] \otimes \Lambda(s_{2l+1}), \Fq) } $  \\
\hline
$p^{kl} \neq 1$ and $p^{2k} = 1$ & $ \Tor_{\Fq[z_{2l}] \otimes \Lambda(z_{2l+1})}{ ({\Fq[s_{2l}] \otimes \Lambda(s_{2l+1})}, \Fq) }$  \\
\hline
$p^{kl} = 1$ and $p^{2k}\neq1$  & $ \Tor_{\Fq[z_{4}]}{ ({\Fq[s_{4}]}, \Fq) }$ \\
\hline
$p^{kl} \neq 1$ and $p^{2k}\neq1$  &   $\Tor_{\Fq}(\Fq, \Fq) $\\
\hline
\end{tabular}
\end{center}
\label{E2tor}
\end{table}
For example, the $p^{kl} = 1 (\modd ~q)$  and $p^{2k} \neq 1 (\modd ~q)$ case is obtained via the choices
\beq
(A,B,C, M, N, L)=  (\Fq,  \Fq[z_{2l}] \otimes \Lambda(z_{2l+1}), \Fq[z_4], \Fq[s_{2l}] \otimes \Lambda(s_{2l+1}), \Fq[s_4],\Fq) \nonumber
\eeq
\nod and the other entries are obtained similarly.
 In Table \ref{E2tor}, all the modules are trivial over their  respective $\Fq$--algebras. 

For $q$ odd, all the $\Fq$--algebras in Table \ref{E2tor} are finitely generated free graded commutative.  Let $\Fq(X)$ denote a free graded commutative algebra on the graded set $X$.

We follow \cite{users}*{p.  258-260} to compute $\Etwo$.
If one resolves the trivial $\Fq(X)$--module $\Fq$ using the Koszul complex $\Omega(X)$,
then
\beq
\Tor_{\Fq(X)} (L, \Fq)  = H( \Omega(X)\otimes L, d_L)
\eeq
\nod where  $L$ is any $\Fq(X)$--module.
In particular,  $\Omega(X)$ is $\Lambda(s^{-1}X^{even}) \otimes \Gamma(s^{-1}X^{odd})$ where $\Gamma$ denotes the divided power algebra. Here $s^{-1}X^{even}$ is a set of odd degree generators obtained from the even degree elements of $X$ by decreasing the their degree by one and  $s^{-1}X^{odd}$ is defined analogously.
In all the cases at hand, $L$ is a trivial $\Fq(X)$--module which implies $d_L$ is zero. Thus, $\Etwo$ is given by $\Omega(X)\otimes L$ (see Table \ref{tablekmfixedthm}). As the EMSS is a spectral sequence of $\Fq$--algebras \cite{users}, we have computed $\Etwo$ as an algebra.
 \end{proof}

Note that from Table \ref{tablekmfixedthm} the EMSS computing $\HBKqfix$ collapses at $\Etwo$ if $p^{kl} \neq  1 (\modd ~q)$ for degree reasons.  However, if $p^{kl} =  1 (\modd ~q)$,
$d_r(\gamma_r(x_{2l}))$ is potentially nonzero.

\subsection{Levi component calculations and a proof of Theorem \ref{rank2thm}}
\label{sec:levical}

Note that for odd $q$ and $I=\{i\}$
\beq
\HBGIq \cong \HBTq^{r_i} = \Fq[p_2(x,y) , p_4(x,y)] = \Fq[s_2 , s_4]
\label{weylfixed}
\nonumber
\eeq
\nod with $r_i$ generating $W_I$ and  $p_j(x,y)$ homogenous polynomials of degree $j$ in the degree $2$ generators of $\HBTq$, i.e. $p_2(x,y)$ is linear in $x$ and $y$.  The action of $\psk$ is given by a simple scalar multiplication on generators and we can compute $\HBGIqfix$
for $I\in S= \{ \emptyset, \{1\}, \{2\} \}$.

\begin{table}
\caption{$\HBGoneqfix
\cong \HBGtwoqfix$ for rank 2 Kac-Moody groups of infinite type and $q$ odd.}
\begin{center}
\begin{tabular}{ |l|l|l| }
\hline
& $\HBGoneqfix
\cong \HBGtwoqfix$    \\
\hline
$p^{k} = 1 (\modd ~q)$ & $\Lambda(z_1, z_{3}) \otimes  \Fq[s_2, s_{4}]$ \\
\hline
$p^{k} = -1 (\modd ~q)$ &  $\Lambda( z_{3}) \otimes  \Fq[ s_{4}]$  \\
\hline
$p^{k} \neq \pm 1 (\modd ~q)$ &  $\Fq$ \\
\hline
\end{tabular}
\label{BGonefix}
\end{center}
\end{table}

\begin{thm}  The Eilenberg-Moore spectral sequence (EMSS) for the homotopy pullback
 computes $\HBGIqfix$ where $\psk$ is the $k^{th}$ unstable Adams operation.  For $q$ odd, we may compute $\HBGIqfix$ for $I\in S= \{ \emptyset, \{1\}, \{2\} \}$ and, up to isomorphism, it is given in Table \ref{BGonefix} and
\beq
\HBTqfix =  \Lambda(z_1, z_1') \otimes\Fq[s_2, s_2'] &\iff&  p^{k} = 1 (\modd ~q) {\rm ~~~and~~~ } \nonumber \\
 \Fq &\iff& p^{k} \neq 1 (\modd ~q) ~~.
 \label{BTfixed}
\eeq
\label{levifixedthm}
\end{thm}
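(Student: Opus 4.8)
The plan is to compute each $\HBGIqfix$ for $I\in\{\emptyset,\{1\},\{2\}\}$ by running the Eilenberg--Moore spectral sequence (EMSS) for the homotopy pullback of the diagram (\ref{Gpullback}) with $K$ replaced by $G_I$, exactly in the manner of Theorem \ref{kmfixedthm}, and then reading off the answer after a change-of-rings simplification. First I would record the input. For $I=\emptyset$ we have $G_\emptyset=T$, so $\HBTq=\Fq[s_2,s_2']$ with both generators in degree $2$, and $\psk$ acts by multiplication by $p^k$ on each. For $I=\{i\}$, the displayed identification preceding the theorem gives $\HBGIq\cong\HBTq^{r_i}=\Fq[s_2,s_4]$ with $|s_2|=2$ and $|s_4|=4$, where $s_2$ is linear and $s_4$ quadratic in the degree-$2$ generators of $\HBTq$ (both statements using that $q$ is odd). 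Since $\psk$ commutes with the restriction $\HBGIq\to\HBTq$ and acts on the degree-$2$ part of $\HBTq$ by $p^k$, it acts on $\HBGIq$ by $s_2\mapsto p^k s_2$ and $s_4\mapsto p^{2k}s_4$.

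Next I would run the EMSS. Its $\Etwo$--term is $\Tor_{\HBGIq\otimes\HBGIq}(\HBGIq,\HBGIq)$, the left-hand copy being a module over $\HBGIq\otimes\HBGIq$ through $(1\times\psk)\circ\De$. Applying the change-of-rings isomorphism (Proposition \ref{cofr}) with the substitution that replaces the second-factor generators by their differences with the first-factor generators---exactly as in the proof of Theorem \ref{kmfixedthm}---the diagonal copy of $\HBGIq$ becomes free over the first-factor subalgebra, and one is left with $\Tor_{\Fq[u_2,u_2']}(\Fq[s_2,s_2'],\Fq)$ for $I=\emptyset$, where each $u_j$ has internal degree $2$ and acts as $(p^k-1)$ times the corresponding $s$; and with $\Tor_{\Fq[u_2,u_4]}(\Fq[s_2,s_4],\Fq)$ for $I=\{i\}$, where $u_2$ has internal degree $2$ and acts as $(p^k-1)s_2$ while $u_4$ has internal degree $4$ and acts as $(p^{2k}-1)s_4$.

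Then I would split into cases according to whether $p^k-1$ and $p^{2k}-1=(p^k-1)(p^k+1)$ vanish modulo $q$. By the K\"unneth formula for $\Tor$ over a tensor product of polynomial algebras each variable is independent, and over $\Fq[u]$ with $u$ of internal degree $d$ the module $\Fq[s]$ with $u\cdot s=c\,s$ has $\Tor=\Fq$ when $0\ne c\in\Fq$ and $\Tor=\Fq[s]\otimes\Lambda(z)$ with $|z|=d-1$ when $c=0$ (the Koszul computation from the end of the proof of Theorem \ref{kmfixedthm}). For $I=\emptyset$: if $p^k\equiv1\pmod q$ both $u_j$ act by zero, giving $\Lambda(z_1,z_1')\otimes\Fq[s_2,s_2']$ with $|z_1|=|z_1'|=1$; if $p^k\not\equiv1\pmod q$ both act as unit multiples, giving $\Fq$. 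For $I=\{i\}$: if $p^k\equiv1$, both $u_2,u_4$ act by zero and $\Etwo=\Lambda(z_1,z_3)\otimes\Fq[s_2,s_4]$ with $|z_1|=1$, $|z_3|=3$; if $p^k\equiv-1$ (so $p^k-1\equiv-2\ne0$ as $q$ is odd, while $p^{2k}\equiv1$), $u_2$ acts as a unit multiple of $s_2$ and $u_4$ by zero, giving $\Fq\otimes(\Fq[s_4]\otimes\Lambda(z_3))=\Lambda(z_3)\otimes\Fq[s_4]$; if $p^k\not\equiv\pm1$, both act as unit multiples, giving $\Fq$. This matches Table \ref{BGonefix} and (\ref{BTfixed}).

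Finally I would check convergence. In each case the nonzero part of $\Etwo$ lies in filtration $0$ (a polynomial subalgebra, concentrated in even total degree) together with filtration $-1$ (and $-2$ for a product of two exterior classes); since $d_r$ raises filtration by $r\ge2$, every differential out of an exterior class or a product of two lands in filtration $\ge0$ in odd total degree, hence in $0$, while every differential out of the filtration-$0$ part lands in positive filtration, hence in $0$. So each EMSS collapses at $\Etwo$, and because the filtration-$0$ part is polynomial and the remaining generators are exterior of odd degree (with $q$ odd), there are no additive or multiplicative extension problems, so the ring structure is as stated; as a cross-check, $\BGIqfix\simeq BG_I(\Fpk)\qcp$, so these answers agree with the classical computation of $H^{\ast}(BG_I(\Fpk),\Fq)$ for these small reductive groups, and for $I=\emptyset$ the answer also follows directly from the fibre sequence underlying the homotopy-fixed-point construction on $BT$, whose transgression is multiplication by $1-p^k$. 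The main obstacle is not the $\Tor$ computation---routine given the machinery of Section \ref{sec:compmain}---but fixing the $\psk$-action on $\HBGIq$ precisely (in particular $s_4\mapsto p^{2k}s_4$, which rests on $q$ being odd so that $s_4$ is genuinely quadratic in the degree-$2$ classes) together with the collapse and extension bookkeeping in the $p^k\equiv1$ case, where $\Etwo$ is largest.
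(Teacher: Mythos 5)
Your proposal is correct and follows essentially the same route as the paper: run the EMSS for the homotopy pullback defining the homotopy fixed points, use the action of $\psk$ on $\HBGIq$ (determined by $s_2\mapsto p^k s_2$, $s_4\mapsto p^{2k}s_4$ via restriction to $\HBTq$), apply the change-of-rings reduction of Proposition \ref{cofr} to arrive at the $\Tor$ groups of Table \ref{E2BGone}, compute them by Koszul resolutions, and observe collapse at $\Etwo$ for degree reasons. The paper's proof is just a terse reference to "the techniques in the proof of Theorem \ref{kmfixedthm}"; your write-up supplies exactly the details that reference is standing in for, including the case analysis on $p^k\pmod q$ and the collapse/extension bookkeeping.
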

 \begin{proof}
 Let us first compute $\HBGoneqfix\cong \HBGtwoqfix$.
If we employ the techniques in the proof of Theorem \ref{kmfixedthm}, we obtain Table \ref{E2BGone} in which all $\Fq(X)$--modules are trivial.
Kozsul resolutions give
Table \ref{BGonefix}, but here the spectral sequence collapses at $\Etwo$ for degree reasons. Equation (\ref{BTfixed}) is obtained similarly.
\begin{table}
\caption{$\Etwo$ collapsing to $\HBGoneqfix\cong \HBGtwoqfix$ for $z_i= x_i -x_i'$ the difference of the generators for two copies of $\HBGoneqfix\cong \HBGtwoqfix$.}
\begin{center}
\begin{tabular}{ |l|l| }
\hline
& $\Etwo$    \\
\hline
$p^{k} = ~1 (\modd ~q)$ & $\Tor_{\Fq[z_2, z_{4}]} (\Fq[s_2, s_4], \Fq) $ \\
\hline
$p^{k} = -1 (\modd ~q)$ &  $\Tor_{\Fq[ z_4]}(  \Fq[s_4], \Fq)$  \\
\hline
$p^{k} \neq \pm 1 (\modd ~q)$ &   $\Tor_{\Fq}(  \Fq, \Fq)$ \\
\hline
\end{tabular}
\label{E2BGone}
\end{center}
\end{table}
\end{proof}

\begin{proof}[Proof of Theorem \ref{rank2thm}]
  Let us set notation for the Mayer-Vietoris sequence for computing $H^\ast (\hcl_\cS \BGIqfix, \Fq)$
\beq
\cdots \stackrel{\partial_{n-1}}{\rightarrow} H^n(\hcl_\cS \BGIqfix, \Fq) &\stackrel{\rho_n}{\rightarrow}&  H^n ( \BGoneqfix, \Fq) \oplus H^n ( \BGtwoqfix, \Fq)  \nonumber \\
&\stackrel{\De_n}{\rightarrow}& H^n ( \BTqfix, \Fq) \stackrel{\partial_n}{\rightarrow}  \cdots .
\label{mvlabeled}
\eeq
\nod We will proceed by cases.

{\bf Case 1:} $p^{k}\neq 1 (\modd ~q)$.  Here the vanishing of $\HBTqfix$ gives the second and third rows of Table \ref{colimoffixed} by (\ref{BTfixed}--\ref{mvlabeled}).
If $p^{k} \neq \pm 1 (\modd ~q)$, then the Theorem \ref{rank2thm} clearly holds.
If instead $p^{k} = -1 (\modd ~q)$, then  comparing
Tables \ref{tablekmfixedthm} and \ref{colimoffixed}  in degrees $3$ and $5$
  gives $\HBKqfix \neq \HBKqfixhoco$. 

{\bf Case 2:} $p^{k} = 1 (\modd ~q)$. In this case, $\HBGIqfix$ is isomorphic to the $\Fq$--cohomology of free loops on $BG_I$ all for $I\in \cS$ \cite{twisted}.  Furthermore, we know $\Delta$
 from the Mayer-Vietoris sequence (\ref{mvlabeled}) almost completely by loc. cit.
 A careful count of dimensions will show
 $H^{4l}(\hcl_{I\in \cS} {(BG_I)}^{h\psk})^{\wedge}_q, \Fq) \neq H^{4l}(\BKqfix , \Fq)$.  Let $|\cdot|$ denote the rank of a vector space for this purpose.

On $\Etwo$ of the EMSS for $H^{\ast }(\BKqfix , \Fq)$  total degree $4l$
has rank 6 for $l$ odd and rank 8 for $l$ even.
  Because this is a spectral sequence of algebras, $d_r$ of all these generators vanish
since they are products of permanent cocycles for degree reasons noting that $q \neq 2$ implies $\gamma_2(x_{2l})=\frac{\gamma_1(x_{2l})}{2}^2$.  Indeed, they are all permanent cocycles as they are not the target of any differential for degree reasons and we recover
$|H^{4l}(\BKqfix , \Fq)|$.

Considering (\ref{BTfixed}) and Table \ref{BGonefix} we see that
\beq
|H^{4l-1}( \BTqfix , \Fq))|=2 \cdot |({BG_i}^{\wedge}_q)^{h\psk}, \Fq)|=4l
\nonumber
\eeq
\nod for $i \in \{1,2 \}$.  It follows that
\beq |H^{4l}(\hcl_{I\in \cS} \BGIqfix, \Fq)|= |\ke(\Delta_{4l-1}) | + |\ke(\Delta_{4l})| \nonumber \eeq
\nod by elementary homological methods. By \cites{rank2mv,twisted}, there are isomorphisms of graded abelian groups
\beq \ke(\Delta) \cong \HBTqfix^W \cong \Fq[s_4, s_{2l}] \otimes \Lambda(x_3, x_{2l-1})
\nonumber
\label{weylactcoho}
\eeq
\nod where $W$ is the infinite dihedral Weyl group of $K$ which acts on $\HBTqfix \cong \Lambda(x_1, x_1') \otimes\Fq[s_2, s_2']$.
Here we extend the standard action on  $H^{\ast}( BT, \Fq) \cong \Fq[s_2, s_2']$
via the identification $ds_i=x_i$ so that $d$ commutes with the action and satisfies the Leibnitz rule. In the terminology of \cites{twisted}, $\BTqfix$ is a twisted loop space  with associated derivation $d$ and $d$ commutes with restriction.
Considering  $\Fq[s_4, s_{2l}] \otimes \Lambda(x_3, x_{2l-1})$ in degree $4l$ and $4l-1$
gives that $H^{4l}(\hcl_{I\in \cS} {(BG_I)}^{h\psk})^{\wedge}_q, \Fq)$ has rank 5 for $l$ odd and rank 6 for $l$ even. Hence
 $p^{k} = 1 (\modd ~q)$ implies $\HBKqfix$ and $\HBKqfixhoco$ are distinct (and nontrivial) for odd $q$.
 \end{proof}

Table \ref{colimoffixed} summarizes  our deductions about the structure of
\beq
H^\ast (\hcl_\cS \BGIqfix, \Fq) \cong H^\ast ( BK(\Fpk), \Fq)
\nonumber
\eeq \nod within the proof of Theorem \ref{rank2thm}.
The methods of \cite{rank2mv}, where (\ref{mvkm}) is used to compute $\HBKq$, and a good understanding of the derivation associated to a twisted loop space should in principle allow further computation
in the $p^{k} = 1 (\modd ~q)$ case.  We preform no such calculations here but note that \cite[Theorem 8.3]{AR} computes $H^\ast ( BK(\Fpk), \Fq)$ in most of these interesting cases.

\begin{table}
\caption{$H^\ast ( BK(\Fpk), \Fq)$ for rank 2 Kac-Moody groups of infinite type.}
\begin{center}
\begin{tabular}{ |l|l| }
\hline
& $H^\ast ( BK(\Fpk), \Fq)$    \\
\hline
$p^{k} = ~1 (\modd ~q)$ & $H^\ast (\hcl_\cS L\BGI, \Fq)$ \\
\hline
$p^{k} = -1 (\modd ~q)$ &  $\Lambda( z_{3}) \otimes  \Fq[ s_{4}] \oplus \Lambda( z_{3}') \otimes  \Fq[ s_{4}']$  \\
\hline
$p^{k} \neq \pm 1 (\modd ~q)$ & $\Fq $  \\
\hline
\end{tabular}
\label{colimoffixed}
\end{center}
\end{table}

\subsection{Examples of $H^{\ast}(BU^+(\Fpk), \Fp)$}
\label{sec:calcp}

  In this subsection, we preform some preliminary calculations for the group cohomology of $U^+(\Fpk)$ at $p$ using our new homotopy decomposition (\ref{hocoUnewslick}) and  briefly discuss considerations in the general case.
  In our examples, we will see that $H^\ast (BU^+(\Fpk), \Fp)$ is infinitely generated as a ring.
    Along the way, we give
  explicit descriptions of the groups $U^+(R)$ underlying our cohomology calculations.  Unless otherwise stated, when we refer to Kac-Moody groups in this subsection we mean some image of Tits's \cite{TitsKM} explicit
  (discrete, minimal, split)  Kac-Moody group functor $K(-): {\bf Rings} \rightarrow {\bf Groups}$ from the category of commutative  rings with unit to the category of groups.  Likewise, $U^+(R)$ will be Tits's subgroup generated by the positive (real) root groups (\ref{U+def}).
  When $R$ is clear from context, we will simply write $K$ or $U^+$.

Our simplest (and most explicit) calculations will be for $U_2:=U_2^+(R) \le K_2(R)$  the positive unipotent subgroup of the infinite dimensional Kac-Moody group $K_2(R)$ with generalized Cartan matrix
  \beq
\left[\begin{array}{cc} 2 & -a \\ -b & 2\end{array}\right]
\label{gcm2}
\eeq \nod where $a, b \ge 2$.  In this case the relevant diagram  of unipotent subgroups of $U_2^+(R)$ indexed over $W$ is
\beq
\xymatrix{
\ldots   U_{tst}  & U_{st}\ar[l] & U_{t} \ar[l]& e \ar[l] \ar[r]& U_s \ar[r]& U_{st} \ar[r] & U_{sts} \ldots
}
\label{dubtel}
\eeq
\nod as here $W$ is the infinite dihedral group.  When $a=b=2$ and $R$ is a field, we have an affine Kac-Moody group $K_2(R)\cong GL_2(R[t,t^{-1}])$ and $U_2^+$ is known explicitly (cf. \cite{Titscover}).
We will identify $U_w(R)$ directly from the presentation of Tits \cite{TitsKM}
 and recover $U_2^+(R)$ from the colimit presentation induced by (\ref{hocoUnewslick}).  In fact, for any Kac-Moody group with generalized Cartan matrix $A=(a_{ij})_{1 \le i,j \le k}$ such that $|a_{ij}|\ge 2$ for all $1 \le i,j \le k$
the finite dimensional unipotent subgroups $U_w(R)$ have a very simple form.

\begin{lem} Let $K(R)$ be the discrete Kac-Moody group over $R$
with generalized Cartan matrix $A=(a_{ij})_{1 \le i,j \le k}$ such that $|a_{ij}|\ge 2$ for all $1 \le i,j \le k$. Then, for $U_w(R)\le K(R)$ defined by (\ref{uwdef}) we have
\beq
U_w(R) \cong (R,+)^{ \oplus l(w)}
\nonumber
\eeq
\nod such that the image of $U_w(R) \hookrightarrow U_{ws}(R)$ for $l(w) + 1 = l(ws)$ is the first  $l(w)$ factors.
\label{projlemma}
\end{lem}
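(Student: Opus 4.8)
The plan is to reduce the lemma to the single statement that $U_w(R)$ is abelian. Since $|a_{ij}|\ge 2$ for $i\neq j$ forces $a_{ij}a_{ji}\ge 4$, formula (\ref{kmweyl}) gives $m_{ij}=\infty$ for all $i\neq j$, so $W\cong(\ZZ/2\ZZ)^{\ast k}$ and, by the solution of the Word Problem (Theorem \ref{wordproblem}), every $w\in W$ has a \emph{unique} reduced expression $w=s_{i_1}\cdots s_{i_{l(w)}}$. Hence $\Theta_w=\Phi^+\cap w\Phi^-=\{\beta_1,\dots,\beta_{l(w)}\}$, $\beta_c=s_{i_1}\cdots s_{i_{c-1}}\alpha_{i_c}$, is unambiguously determined (each $\beta_c$ a positive real root), and when $l(ws)=l(w)+1$ the reduced word of $ws$ is that of $w$ with $s$ appended, so $\Theta_w\subset\Theta_{ws}$ consists of the first $l(w)$ roots. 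Using that each root group $U_{\beta_c}(R)\cong(R,+)$ (part of the RGD datum of Tits's functor, \ref{sec:rgd}) and that the multiplication map (\ref{multiply}) is a set bijection, both assertions follow once $U_w(R)$ is known to be abelian: then (\ref{multiply}) becomes a group isomorphism $(R,+)^{\oplus l(w)}\stackrel{\sim}{\longrightarrow}U_w(R)$, and appending a generator to the reduced word of $w$ just appends a last factor, which is exactly the asserted compatibility with the inclusions (image of $U_w(R)\hookrightarrow U_{ws}(R)$ the first $l(w)$ factors).

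To see that $U_w(R)$ is abelian it suffices to check $[U_\beta(R),U_{\beta'}(R)]=e$ for distinct $\beta,\beta'\in\Theta_w$. By the commutator relations satisfied by RGD systems (equivalently, Tits's explicit relations for $K(-)$, cf. \ref{sec:rgd}), this commutator lies in the subgroup generated by the $U_\gamma(R)$ for $\gamma$ a (real) root of the form $m\beta+n\beta'$ with $m,n\ge 1$, so it is enough to prove that no such $m\beta+n\beta'$ is a root. I would read this off from the standard $W$-invariant symmetric bilinear form $(\cdot,\cdot)$ on the root lattice: real roots have strictly positive square-length and imaginary roots have square-length $\le 0$, so a positive vector whose square-length is too large is neither. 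The key point is the estimate
\[
(\beta,\beta')>0\qquad\text{for all distinct }\beta,\beta'\in\Theta_w;
\]
granting it, $(m\beta+n\beta',m\beta+n\beta')=m^2(\beta,\beta)+2mn(\beta,\beta')+n^2(\beta',\beta')$ is, in the simply-laced case, at least $2(m^2+mn+n^2)\ge 6>2$, ruling out both possibilities. (The non-simply-laced and non-symmetrizable generalized Cartan matrices with $|a_{ij}|\ge 2$ need a parallel argument, using coroot pairings $\langle\beta^\vee,-\rangle$ in place of the form and tracking root lengths more carefully; I would treat them separately.)

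The heart of the proof, and the step I expect to be the main obstacle, is the estimate $(\beta,\beta')>0$. I would prove it by induction on $l(w)$, simultaneously with the auxiliary claim that $(\alpha_i,\beta)\le 0$ whenever $\beta\in\Theta_v$ and $\alpha_i\notin\Theta_v$ (i.e. $l(s_iv)=l(v)+1$). The inductive step peels the first letter off the reduced word and uses $W$-invariance: for $\beta_a,\beta_b\in\Theta_w$ with $2\le a<b$ one has $(\beta_a,\beta_b)=(s_{i_1}\beta_a,s_{i_1}\beta_b)=(\beta'_{a-1},\beta'_{b-1})$ with $\beta'_\bullet\in\Theta_{w'}$, $w'=s_{i_2}\cdots s_{i_k}$, and one invokes the inductive hypothesis; the case $a=1$ reduces, via $s_{i_1}\alpha_{i_1}=-\alpha_{i_1}$, to the auxiliary claim. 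The auxiliary claim is again proved by peeling letters, the delicate configuration being a reduced word that begins with a long alternating block $s_js_is_js_i\cdots$ in two letters: repeated peeling rewrites $(\alpha_i,\beta)$ as $c_1(\alpha_i,\gamma)+c_2(\alpha_j,\gamma)$ with $\gamma$ an inversion root of the residual word and coefficients $c_1,c_2>0$ satisfying a Chebyshev-type recurrence with multiplier $a_{ij}a_{ji}\ge 4$ (so they stay positive and grow), and once the block is exhausted the residual word begins with a third letter, making both pairings $\le 0$ by induction; the purely alternating (rank-two dihedral) situation is checked directly, the point being that every root that occurs has slope strictly below the limiting slope of that dihedral subsystem. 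Carrying out this nested bookkeeping cleanly---together with the variant argument for non-symmetrizable $A$---is the real work; the rest is formal.
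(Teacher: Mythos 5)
Your opening reduction is sound and is essentially the paper's: uniqueness of reduced words for $W\cong(\ZZ/2\ZZ)^{\ast k}$, the set bijection (\ref{multiply}), and the compatibility $\Theta_w\subset\Theta_{ws}$ reduce the lemma to showing that the root groups indexed by distinct $\beta,\beta'\in\Theta_w$ commute, i.e.\ to showing that no real root of the form $m\beta+n\beta'$ with $m,n\ge 1$ exists. The genuine gap is in how you propose to rule such roots out. Your argument is built entirely on the $W$--invariant bilinear form, and (i) for non-symmetrizable generalized Cartan matrices---which for rank $\ge 3$ with $|a_{ij}|\ge 2$ is the typical case---no such form exists, so the real/imaginary dichotomy by sign of the norm, on which the whole strategy rests, is unavailable; you flag this but offer no replacement. (ii) Even for symmetrizable non-symmetric $A$, real roots occur in several lengths, so ``norm $>2$'' does not exclude being a real root; you would need the norm to exceed the largest real root length, which requires a quantitative lower bound on $(\beta,\beta')$, not mere positivity. (iii) The estimate $(\beta,\beta')>0$ itself---which you correctly identify as the heart of your approach, and which is genuinely delicate since it fails without the hypothesis $|a_{ij}|\ge 2$ (in finite type every pair of positive roots lies in $\Theta_{w_0}=\Phi^+$, including pairs of simple roots with negative pairing)---is only sketched via a nested induction that you do not carry out. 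As written, the proof is complete at best for symmetric $A$ modulo that unproved estimate.

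The paper's argument avoids the bilinear form entirely and is uniform in $A$. By Tits's relations, $[U_\beta,U_{\beta'}]$ lies in the subgroup generated by the $U_\gamma$ with $\gamma\in(\NN\beta+\NN\beta')\cap\Phi^+\setminus\{\beta,\beta'\}$, and since $\Theta_w=\Phi^+\cap w\Phi^-$ is closed under taking $\NN$--combinations that are roots, any such $\gamma$ again lies in $\Theta_w$. One then checks directly from the action formula (\ref{actiondef2}), using $|a_{ij}|\ge 2$, that the last inversion root $\beta_l=ws_{i_l}\alpha_{i_l}$ coefficient-dominates every other element of $\Theta_w$ in the simple-root basis, with one coefficient strictly largest; this forces $(\NN\beta+\NN\beta_l)\cap\Theta_w=\{\beta,\beta_l\}$, so $U_{\beta_l}$ splits off as a direct factor and one inducts on $l(w)$. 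If you want to salvage your write-up, this coefficient comparison inside the closed set $\Theta_w$ is the missing idea that replaces all of the metric estimates and covers the non-symmetrizable case for free.
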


\begin{proof}
  Recall that $w^{-1}\Phi^{+} \cap \Phi^{-} = \Theta_w:=\{ {\alpha_{i_1}}, \alpha_{i_2}, \ldots, {ws_{i_l}\alpha_{i_l} }\}$ and  that the multiplication map
\beq
U_{\alpha_{i_1}} \times U_{s_{i_1}\alpha_{i_2}} \times \ldots \times U_{ws_{i_l}\alpha_{i_l} } \stackrel{m}{\longrightarrow} U_w
\nonumber
\label{multiply2}
\eeq
\nod is an isomorphism of sets (\ref{multiply}).
 It is immediate from the presentation of Tits \cite[3.6]{TitsKM} that for $\alpha, \beta \in \Phi^{+}$,
$\{\NN \alpha + \NN \beta \} \cap \Phi^{+} = \{ \alpha , \beta\}$
 implies
 $ \langle U_\alpha ,  U_\beta \rangle = U_\alpha\times U_\beta$.  Moreover, the $W$ of action on $\Phi$ guarantees that
 $\{\NN \alpha +\NN \beta \} \cap \Phi^{+} \subset \Theta_w$ whenever $\alpha, \beta \in \Theta_w$.
 Direct computation in the hypothesized cases of the action of $W$ on the simple roots (\ref{actiondef2}) shows that each of the positive coefficients of $ws_{i_l}\alpha_{i_l}$ expressed in terms of the simple roots is maximal
 and one coefficient is strictly largest  within $\Theta_w$.
  In particular, if $\alpha \in \Theta_w$ is distinct from $ws_{i_l}\alpha_{i_l}$ then $U_\alpha$ and $U_{ws_{i_l}\alpha_{i_l}} $ commute because the coefficients demand that if $m_1 \alpha + m_2 ws_{i_l}\alpha_{i_l} \in \Theta_w$, then  $(m_1, m_2)$ is  $(0, 1)$ or $(m_1, 0)$. However, $m_1 \alpha \in \Phi^{+} \implies m_1=1$ \cite{kumar}.

 Since $u ,v \in U_w$ implies
\beq
uv=(u_{{1}} u_{2}  \ldots  u_{ l-1 })u_{ {l} } (v_{{1}} v_{{2}}  \ldots v_{ l-1 } )v_{ l }= ( u_{1} u_{2}  \ldots  u_{ l-1 })(v_{1} v_2  \ldots v_{ l-1 } )u_{ l} v_{ l} \nonumber
\eeq
\nod for $u_j, v_j \in U_{s_{i_1} \ldots s_{i_{j-1}}\alpha_{j}}$, we have $U_w \cong U_{ws_{i_l}} \times U_{\gamma}$ for $\gamma =ws_{i_l}\alpha_{i_l}$ .  The lemma follows by induction on $l(w)$ and the fact that all root subgroups are isomorphic to $(R, +)$.
\end{proof}

Returning to $U_2(R)$ and considering  the right telescope of (\ref{dubtel}), we obtain a projective limit of graded abelian groups upon taking group cohomology 
\beq
\xymatrix{
 H^{\ast}(e, \Fp)& H^{\ast}(U_s, \Fp)  \ar@{->>}[l]& H^{\ast}(U_{st}, \Fp)  \ar@{->>}[l] &  H^{\ast}(U_{sts}, \Fp)  \ldots  \ar@{->>}[l]
}
\label{righttele}
\eeq
\nod which is a sequence of surjections by the Kunneth theorem.  In particular,
\beq
\lim_{\leftarrow} {}^1 H^{\ast}(U_w(R), \Fp) = 0
\label{lim1=0}
\eeq
\nod and the cohomology spectral sequence of the mapping telescope of classifying spaces \cite{users} collapses at $\Etwo$.

There is some subtly in defining the positive unipotent subgroup of a Kac-Moody group over an arbitrary ring (cf. Remark 3.10 (d) in \cite{TitsKM}) so we will state the rest of our results in this section only over fields.

\begin{thm} Let $U_2:=U_2^+(\LL) \le K_2(\LL)$ be the positive unipotent subgroup of the Kac-Moody group $K_2(\LL)$ over a field $\LL$ with generalized Cartan matrix given by (\ref{gcm2}). Then
we may compute the group cohomology of $U_2(\LL)$ as
\beq
H^{\ast}(BU_2(\LL), \FF) = \bigotimes_{\ZZ>0} H^{\ast}(\LL, \FF) \oplus \bigotimes_{\ZZ>0} H^{\ast}(\LL, \FF)
\nonumber
\eeq
\nod where $\LL$ is considered as an abelian group and $\FF$ is a field.
\label{affinethm}
\end{thm}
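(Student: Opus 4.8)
The plan is to combine the $I=\emptyset$ case of Theorem~\ref{hocoUnewthmslick} with the completely explicit description of the subgroups $U_w$ supplied by Lemma~\ref{projlemma}. First I would note that $a,b\ge 2$ forces $a_{12}a_{21}=ab\ge 4$, so by~(\ref{kmweyl}) the Weyl group of $K_2$ is the infinite dihedral group, and hence the poset $\cat{W}$ of $W$ under the weak Bruhat order is the tree displayed in~(\ref{dubtel}): it has least element $e$ and two infinite arms, $\cat{R}^{+}=\{e< s< st< sts<\cdots\}$ and $\cat{R}^{-}=\{e< t< ts< tst<\cdots\}$, with $\cat{R}^{+}\cap\cat{R}^{-}=\{e\}$. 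Theorem~\ref{hocoUnewthmslick} identifies $BU_2(\LL)$ with $\hcl_{\cat{W}}BU_w(\LL)$. Since any finite chain in the tree $\cat{W}$ is totally ordered, it lies entirely in $\cat{R}^{+}$ or in $\cat{R}^{-}$, so $\{\cat{R}^{+},\cat{R}^{-}\}$ is a cover of $\cat{W}$ in the sense of Definition~\ref{catcover} whose indexing poset has all greatest lower bounds (the intersection being $\{e\}$). Theorem~\ref{coverholim} then presents $BU_2(\LL)$ as the homotopy pushout of $\hcl_{\cat{R}^{+}}BU_w(\LL)\longleftarrow BU_e(\LL)\longrightarrow \hcl_{\cat{R}^{-}}BU_w(\LL)$, where $BU_e(\LL)\simeq\ast$ as $U_e$ is trivial. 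Write $T^{+}$ and $T^{-}$ for the two outer homotopy colimits.

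Next I would compute $T^{+}$ (the case of $T^{-}$ being identical after interchanging $s$ and $t$). As $\cat{R}^{+}$ is totally ordered of order type $\omega$, $T^{+}$ is the mapping telescope of $\ast=BU_e\to BU_s\to BU_{st}\to\cdots$, and by Lemma~\ref{projlemma} its $n$-th term is $B((\LL,+)^{n})$ with each map, up to homotopy, the inclusion of the first $n$ coordinate factors; equivalently $T^{+}\simeq B(\bigoplus_{\ZZ>0}\LL)$. The Milnor sequence of the telescope is $0\to {\lim}^{1}_{n}H^{\ast-1}((\LL,+)^{n},\FF)\to H^{\ast}(T^{+},\FF)\to \lim_{n}H^{\ast}((\LL,+)^{n},\FF)\to 0$, and since $\FF$ is a field the K\"unneth restriction maps $H^{\ast}((\LL,+)^{n+1},\FF)\to H^{\ast}((\LL,+)^{n},\FF)$ are split surjective, so the ${\lim}^{1}$-term vanishes (this is exactly~(\ref{righttele})--(\ref{lim1=0})). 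Hence $H^{\ast}(T^{+},\FF)=\lim_{n}\bigotimes_{i=1}^{n}H^{\ast}(\LL,\FF)=\bigotimes_{\ZZ>0}H^{\ast}(\LL,\FF)$, the graded tensor product being finite in each total degree. Finally, the Mayer--Vietoris sequence of the homotopy pushout $T^{+}\cup_{\ast}T^{-}$ together with $\widetilde{H}^{\ast}(\ast,\FF)=0$ gives $\widetilde{H}^{\ast}(BU_2(\LL),\FF)\cong \widetilde{H}^{\ast}(T^{+},\FF)\oplus\widetilde{H}^{\ast}(T^{-},\FF)$, which is precisely the asserted formula, the two summands sharing the common degree-zero part $\FF$.

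This argument is largely assembly: the genuine group-theoretic content is Lemma~\ref{projlemma}, and the ${\lim}^{1}$-vanishing is the content of~(\ref{righttele})--(\ref{lim1=0}). The only point requiring (routine) care is verifying that $\{\cat{R}^{+},\cat{R}^{-}\}$ satisfies the hypotheses of Theorem~\ref{coverholim} — that it is a cover of categories and that the intersection of the two arms is $\{e\}$ as a full subcategory, not merely on objects — after which $\hcl_{\cat{W}}$ splits as the stated pushout and the rest is K\"unneth plus a Milnor sequence. I do not expect any serious obstacle beyond this bookkeeping.
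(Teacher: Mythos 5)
Your proposal is correct and follows essentially the same route as the paper: both rest on the decomposition of Theorem \ref{hocoUnewthmslick}, the explicit identification $U_w\cong(\LL,+)^{\oplus l(w)}$ of Lemma \ref{projlemma}, the vanishing of $\lim^1$ along each arm of the infinite dihedral Weyl poset, and the splitting of the double telescope as two telescopes glued at the contractible $BU_e$. The only (cosmetic) difference is that you make the gluing precise via Theorem \ref{coverholim} and a Mayer--Vietoris sequence, where the paper simply observes that the double telescope is the one-point union of the two arms and runs the Bousfield--Kan spectral sequence for the whole diagram.
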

\begin{proof}
 As $\lim_{\cat{W}^{op}}^1 H^{\ast}(U_w(\LL), \FF) = 0$ in this case (\ref{lim1=0}), the spectral sequence of the homotopy colimit of classifying spaces collapses at $\Etwo$ and computes $H^{\ast}(BU^+(\LL), \FF)$ as the projective limit of graded abelian groups $\lim_{\cat{W}^{op}} H^{\ast}(U_w(\LL), \FF)$.  Working one telescope at a time, we see the limit of graded abelian groups (\ref{righttele}) is just
$\lim_{\ZZ>0} H^{\ast}((\LL)^{n}, \FF)$ by Lemma \ref{projlemma}.  This limit is computed in each gradation  so that, unlike the limit of underlying abelian groups, only finite tensor words are possible
in each gradation.
  The left telescope is computed in the same way.  As the double telescope is the one point union of the left and right telescopes, $H^{\ast}(BU_2(\LL), \FF)$ is simply the direct sum of their $\FF$--cohomologies.
  \end{proof}

For the case of interest, $H^{\ast}(BU^+(\Fpk), \Fp)$, cohomology classes are represented by finite tensor words in the generators of $H^{\ast}(\Fpk, \Fp)$.  For instance, for $k=1$ we have
\beq
H^{\ast}((\ZZ / p \ZZ)^n, \Fp)= \left\{
{
\begin{array}{lr}
\FF_2[x_1, \ldots, x_n] &  p=2  \\
\Lambda[x_1, \ldots, x_n] \otimes \Fp[y_1, \ldots, y_n]&  p ~{\rm odd}
 \end{array}
}
\right .
\nonumber
\eeq
\nod where $|x_i|=1$ and $|y_i|=2$.  Applying the double telescope construction
\beq
H^{\ast}(BU^+(\ZZ / 2 \ZZ), \FF_2) &=& (\lim_{\leftarrow}  \FF_2[x_1, \ldots, x_n] )^{\oplus 2}  \nonumber \\
&=&  \FF_2[x_1, ,  \ldots, x_n, \ldots ] \oplus \FF_2[ \overline{x}_1, \ldots, \overline{x}_n, \ldots ].
\nonumber
\eeq
For $p$ odd
\beq
H^{\ast}(BU^+(\ZZ / p \ZZ), \Fp) &=& (\lim_{\leftarrow}  \Lambda[x_1, \ldots, x_n] \otimes \Fp[y_1, \ldots, y_n])^{\oplus 2}   \nonumber \\
  &=&   \Lambda[x_1, \ldots, x_n, \ldots] \otimes \Fp[y_1, \ldots, y_n, \ldots] \nonumber \\
   & \oplus &   \Lambda[\overline{x}_1, \ldots, \overline{x}_n, \ldots] \otimes
    \Fp[\overline{y}_1, \ldots, \overline{y}_n, \ldots]
  .
  \nonumber
\eeq

 Lemma \ref{projlemma} lets us  determine the group structure of more general $U^+(R)$ which have an interesting presentation. Briefly, when Lemma \ref{projlemma} applies we have
\beq
U^+(R)= \cl_{w\in \cat{W}} R^{l(w)}
\nonumber
\eeq
\nod where all maps are inclusions $R^{l(v)} \hookrightarrow R^{l(w)}$ with image the first ${l(v)}$ factors.  This leads to the presentation
\beq
U^+(R)= \langle U_\alpha |    u_\alpha u_\beta u_\alpha^{-1} u_\beta^{-1}=1, ~ { \rm if } ~ \exists w ~ { \rm with} ~ w \{ \alpha, \beta \} \subset \Phi^- \rangle
\nonumber
\eeq
 \nod where $w \in W$, $u_\gamma$ is an arbitrary element of  $U_\gamma$ and $\gamma \in \Phi^+$.  Pictorially, there is a generating $U_\alpha \cong (R, +)$
 for each node in Hasse diagram of the poset $W$, i.e. the directed graph with vertices and edges $(W,\{ (w, ws)| l(w)+1= l(ws)  \})$ (see, e.g., Figure \ref{pic:treeweyl}).
 Generating groups $U_\alpha$ and $U_\beta$ commute exactly when there is a (unique) path between them in this directed graph.  If there is no path between the  corresponding nodes,  $U_\alpha \ast U_\beta$ embeds into $U^+$.

 For example, if $K(R)$  has rank $3$  with generalized Cartan matrix such as
 \beq
\left[\begin{array}{ccc} 2 & -2 &-3\\  -2 & 2 & -2 \\ -2 & -4 & 2\end{array}\right] ,
\label{gcmrank3}
\eeq
\nod then the poset $\cat{W}$  is generated by the directed valence--3 tree pictured in Figure \ref{pic:treeweyl}. Two elements commute exactly when they are both contained in some $U_w(R)$.

\begin{figure}
\begin{tikzpicture}[scale=1]
\def\initalrotate{0}
\def\hexa#1#2{
   \draw[fill=black] #1 #2 +(150+\initalrotate:2) circle (.6mm)
 -- node[rotate=90+\initalrotate] {\midarrow} +(210+\initalrotate:2) circle (.6mm)
 -- node[rotate=150+\initalrotate] {\midarrow} +(270+\initalrotate:2)  circle (.6mm)
 -- node[rotate=30+\initalrotate]{\midarrow} +(330+\initalrotate:2) circle (.6mm)
 -- node[rotate=90+\initalrotate] {\midarrow} +(390+\initalrotate:2) circle (.6mm);}

 \def\linedota#1#2{
   \draw[fill=black] #1 #2 ++(30+\initalrotate:2)
 -- node[rotate=90+\initalrotate] {\midarrow} +(90+\initalrotate:2) circle (.6mm) node[anchor=270] {$s_2s_1s_2$};
   \draw [fill=black]#1 #2 ++(150+\initalrotate:2)
 -- node[rotate=90+\initalrotate] {\midarrow} +(90+\initalrotate:2) circle (.6mm) node[anchor=270] {$s_1s_2s_1$};
 }

  \def\linedotb#1#2{
   \draw[fill=black] #1 #2 ++(30+\initalrotate:2)
 -- node[rotate=90+\initalrotate] {\midarrow} +(90+\initalrotate:2) circle (.6mm) node[anchor=180] {$s_3s_2s_3$};
   \draw [fill=black]#1 #2 ++(150+\initalrotate:2)
 -- node[rotate=90+\initalrotate] {\midarrow} +(90+\initalrotate:2) circle (.6mm) node[anchor=90] {$s_2s_3s_2$};
 }

  \def\linedotc#1#2{
   \draw[fill=black] #1 #2 ++(30+\initalrotate:2)
 -- node[rotate=90+\initalrotate] {\midarrow} +(90+\initalrotate:2) circle (.6mm) node[anchor=90] {$s_1s_3s_1$};
   \draw [fill=black]#1 #2 ++(150+\initalrotate:2)
 -- node[rotate=90+\initalrotate] {\midarrow} +(90+\initalrotate:2) circle (.6mm) node[anchor=0] {$s_3s_1s_3$};
 }

   \def\linedotd#1#2{
   \draw[fill=black] #1 #2 ++(30+\initalrotate:2)
 -- node[rotate=90+\initalrotate] {\midarrow} +(90+\initalrotate:2) circle (.6mm) node[anchor=90] {$s_2s_1s_3$};
   \draw [fill=black]#1 #2 ++(150+\initalrotate:2)
 -- node[rotate=90+\initalrotate] {\midarrow} +(90+\initalrotate:2) circle (.6mm) node[anchor=90] {$s_1s_2s_3$};
 }

  \def\sixdot#1#2{
  \draw[fill=black] #1 #2 +(30:2) circle (.6mm) node[anchor=0]{$s_1s_2$}
   +(150:2) circle (.6mm)node[anchor=180]{$s_2s_1$}
   +(210:2) circle (.6mm)node[anchor=90]{$s_1$}
 +(270:2)  circle (.6mm)node[anchor=270]{$e$}
  +(330:2) circle (.6mm) node[anchor=90]{$s_2$}-- cycle;
  }

    \def\sixdottwo#1#2{
  \draw[fill=black] #1 #2 +(30:2)  node[anchor=270]{}
  +(90:2)  circle (.6mm) node[anchor=0]{$s_3s_2s_1$}
   +(210:2) circle (.6mm)node[anchor=0]{ $s_2s_3s_1$}
  ;
  }

\def\sixdotthree#1#2{
  \draw[fill=black] #1 #2 +(30:2) circle (.6mm) node[anchor=180]{}
  +(90:2) circle (.6mm)  node[anchor=0]{}
   +(150:2) circle (.6mm)node[anchor=180]{$s_3s_1$}
 +(270:2)  circle (.6mm)node[anchor=0]{$s_1s_3$}
  +(330:2) circle (.6mm) node[anchor=180]{$s_3$}
  ;
  }

 \def\sixdotfour#1#2{
  \draw[fill=black] #1 #2 +(30:2) circle (.6mm) node[anchor=0]{$s_3s_2$}
  +(90:2) circle (.6mm)  node[anchor=0]{}
 +(270:2)  circle (.6mm)node[anchor=180]{$s_2s_3$}
  ;
  }

    \def\sixdotfive#1#2{
  \draw[fill=black] #1 #2
  +(90:2) circle (.6mm)  node[anchor=270]{$s_3s_1s_2$}
   +(150:2) circle (.6mm)node[anchor=270]{}
   +(210:2) circle (.6mm)node[anchor=270]{}
 +(270:2)  circle (.6mm)node[anchor=270]{}
  +(330:2) circle (.6mm) node[anchor=270]{$s_1s_3s_2$}-- cycle;
  }

\def\distanceamongcells{2*1.732050}

\hexa{(0,0)}{(0,0)}
\linedota{(0,0)}{(0,0)}
\def\initalrotate{ 240}
\linedotb{(0,0)}{(-60:\distanceamongcells)}
\def\initalrotate{ 120}
\linedotc{(0,0)}{(240:\distanceamongcells)}
\def\initalrotate{ 180}
\linedotd{(0,0)}{(270:4)}

\def\initalrotate{300}
\foreach \a in {0,-60} {
\def\initalrotate{300 + \a}
\hexa{(0,0)}{(\a:\distanceamongcells)}
}

\foreach \a in {240} {
\def\initalrotate{240 + \a}
\hexa{(0,0)}{(\a:\distanceamongcells)}
}

\def\initalrotate{180}
\hexa{(0,0)}{(270:6)}
\def\initalrotate{60}
\hexa{(0,0)}{(180:\distanceamongcells)}

  \sixdot{(0,0)}{(0,0)}
  \sixdottwo{(0,0)}{(180:\distanceamongcells)}
  \sixdotthree{(0,0)}{(240:\distanceamongcells)}
  \sixdotfour{(0,0)}{(300:\distanceamongcells)}
  \sixdotfive{(0,0)}{(0:\distanceamongcells)}

\end{tikzpicture}
\caption{A picture of the graph underlying the poset $\cat{W}=\langle s_1, s_2, s_3 | s_i^2=1\rangle$ and the weak Bruhat order with elements of length at most 3 labeled.}
\label{pic:treeweyl}
\end{figure}

Starting with the homotopy decomposition (\ref{hocoUnewslick}) of Theorem \ref{hocoUnewthmslick}, it is straight forward to apply  Theorem \ref{coverholim} and Theorem \ref{svkthm} to observe
\beq
\cl_{w\in \cat{W} } R^{l(w)} \cong \cl_{m \in \NN} ( \cl_{\cat{W}_m} R^{l(w)} )
\nonumber
\eeq
\nod for $\NN$ the linearly ordered poset and ${\cat{W}_m}$ is the full subposet of $\cat{W}$ of elements of length at most. 
 Set  $U_{\cat{W}_m}:=\cl_{\cat{W}_m} U_w$. By Corollary \ref{combinlocal}
we have a commutative diagram
\beq
\xymatrix{
\hcl_{L^{-1}(\cat{W}_m)} U_{\cat{W}_m}/R^{l(w_I)} \ar[d] \ar[r]&   \hcl_{L^{-1}(\cat{W}_m)} B( R^{l(w_I)}) \ar[d]_{L}  \ar[dr] & \\
\hcl_{\cat{W}_m} U_{\cat{W}_m}/R^{l(w)} \ar[r]&  \hcl_{\cat{W}_m} B(R^{l(w)}) \ar[r]  & BU_{\cat{W}_m}
\label{3pullback}} \nonumber
\eeq
\nod whose vertical maps are weak equivalences.
 Since $\hcl_{L^{-1}(\cat{W}_m)} U_{\cat{W}_m}/R^{l(w_I)}$ can be modeled by a $1$--dimensional $CW$--complex and is simply connected by Corollary \ref{simcon}, it must be contractible so that
\beq
 B( \cl_{\cat{W}_m} R^{l(w)}) \simeq ( \hcl_{\cat{W}_m} B(R^{l(w)})).
 \label{Nfilteredcolimit}
\eeq
Thus, we have homotopy equivalences
\beq
BU^+(R) &\simeq&  \hcl_{w\in \cat{W}} B(R^{l(w)}) \nonumber \\
 &\simeq& \hcl_{\NN} ( \hcl_{\cat{W}_m} B(R^{l(w)}))  \nonumber \\
 &\simeq& \hcl_{\NN} B( \cl_{\cat{W}_m} R^{l(w)})
\label{treeexpression}
\eeq
\nod induced by the obvious maps.  Cohomology calculations  for
$
 B( \cl_{\cat{W}_m} R^{l(w)}) \simeq  \hcl_{\cat{W}_m} B(R^{l(w)})
$ can be preformed by iterated Mayer-Vietoris calculations.
    For instance, for $K$  with the generalized Cartan matrix (\ref{gcmrank3}) considered above we have
    \beq
 \cl_{\cat{W}_m} R^{l(w)} &=& [R^{m} \ast_{R^{m-1}} R^{m} \ast_{R^{m-2 }} R^{m} \ldots  R^{m} \ast_{R^2 }R^{m} \nonumber \\
 &\ast_k& R^{m} \ast_{R^2} R^{m} \ast_{R^3 }R^{m}\ldots R^{m} \ast_{R^{m-1}} R^{m}  ]^{\ast 3}
 \label{amal}
\eeq
\nod and each $R^i$ of the amalgamated products denotes the first $i$ factors.  For example, this implies
\beq
H^{\ast}(\cl_{\cat{W}_3} (\ZZ / 2 \ZZ)^{l(w)} , \FF_2) = (\FF_2[x, x_0, x_1, x_{00}, x_{01}, x_{10}, x_{11}]/ I)^{\oplus 3}
\nonumber
\eeq
\nod when $m=3$ where
\beq
I= \langle x_0 x_1, x_0 \{ x_{10}, x_{11} \}, x_1 \{x_{00}, x_{01} \}, x_{00}\{ x_{10}, x_{10}, x_{11} \},
  x_{01}\{  x_{10}, x_{11} \}, x_{10} x_{11}\rangle \nonumber
\eeq
\nod as nonidentity nodes in the  directed graph underlying $\cat{W}_3$ (and pictured in Figure \ref{pic:treeweyl}) contribute generators and the multiplicative structure is determined by whether the corresponding root groups commute or generate a free product. The three direct summands correspond to the each of the three main branches of the tree underlying $\cat{W}_3$.  For instance, on the $s_2$ branch nodes correspond to generators via
$s_2 \leftrightarrow x$, $s_1s_2 \leftrightarrow x_0$, $ s_3s_2 \leftrightarrow x_1$,
$s_2s_1s_2 \leftrightarrow x_{00}$, $s_3s_1s_2 \leftrightarrow x_{01}$, $s_1s_3s_2 \leftrightarrow x_{10}$, and  $s_2s_3s_2 \leftrightarrow x_{11}$.
The other branches have a similar correspondence.
   This cohomology may be obtained inductively from the expression of $\cl_{\cat{W}_3} R^{l(w)}$ given in (\ref{amal}).
 More generally, this process gives in the following result.

\begin{thm} Let $K(\LL)$ be a discrete Kac-Moody over a field $\LL$
with generalized Cartan matrix $A=(a_{ij})_{1 \le i,j \le n}$ such that $|a_{ij}|\ge 2$ for all $1 \le i,j \le n$ with positive unipotent subgroup $U^+(\LL)$. Then
 the cohomology of $BU_+(\LL)$ with coefficients in any field $\FF$ is given as
\beq
H^{\ast}(BU^+(\LL), \FF)
        &=&\lim_{\leftarrow}  H^{\ast}( \hcl_{\cat{W}_m} B(\LL^{l(w)}), \FF) \nonumber \\
        &=& \otimes_{\cat{W}-\{e\}}  H^{\ast} (\LL , \FF) / I
        \label{treecohoeq}
        \nonumber
\eeq
\nod where  the ideal $I$ is generated by all products of pairs of elements in different copies of $H^{\ast} (\LL, \FF)$
corresponding to nonidentity nodes in the directed graph underlying the poset $\cat{W}$ that are disconnected.
\label{treecalc}
\end{thm}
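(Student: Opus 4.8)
The plan is to feed the homotopy equivalence $BU^+(\LL)\simeq\hcl_\NN B(\cl_{\cat{W}_m}\LL^{l(w)})$ of (\ref{treeexpression}) through a Milnor $\lim^1$ sequence and then compute each finite stage by induction over the tree. First I would note that $|a_{ij}|\ge 2$ forces $m_{ij}=\infty$ for all $i\ne j$ by (\ref{kmweyl}), so $W$ is the free product of $n$ copies of $\ZZ/2\ZZ$ and the Hasse diagram of $\cat{W}$ is the $n$-regular tree rooted at $e$; hence $\cat{W}\setminus\{e\}$ is a forest and two nonidentity nodes are \emph{disconnected} precisely when they are incomparable in $\cat{W}$ (joined by no directed path). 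By Lemma \ref{projlemma} each $U_w(\LL)\cong(\LL,+)^{\oplus l(w)}$, the subposet $\cat{W}_m$ of elements of length $\le m$ is the ball of radius $m$ in this tree, and --- as recorded after (\ref{treeexpression}) and in (\ref{amal}) --- $\cl_{\cat{W}_m}\LL^{l(w)}$ is the graph product of copies of $(\LL,+)$ indexed by the nonidentity nodes of $\cat{W}_m$, two factors commuting iff the corresponding nodes are comparable, built leaf-by-leaf as an iterated amalgamated product.

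Write $X_m:=\hcl_{\cat{W}_m}B(\LL^{l(w)})\simeq B(\cl_{\cat{W}_m}\LL^{l(w)})$. The $\NN$-indexed homotopy colimit is a mapping telescope, so $H^*(BU^+(\LL),\FF)$ sits in the exact sequence relating it to $\lim_m H^*(X_m,\FF)$ and $\lim^1_m H^{*-1}(X_m,\FF)$. The tower $\{H^*(X_m,\FF)\}$ is Mittag--Leffler: every length-$(m+1)$ node is a leaf of $\cat{W}_{m+1}$ attached to $\cat{W}_m$ at its parent $w'$, so $X_{m+1}$ is the homotopy pushout of $X_m\leftarrow\coprod BU_{w'}\to\coprod BU_w$ over these leaves (apply Theorem \ref{coverholim} to the evident cover, or inspect standard models), and $H^*(BU_w,\FF)\to H^*(BU_{w'},\FF)$ is split surjective because $U_{w'}\hookrightarrow U_w$ is split (inclusion of the first $l(w')$ coordinates, Lemma \ref{projlemma}); Mayer--Vietoris then makes $H^*(X_{m+1},\FF)\to H^*(X_m,\FF)$ onto. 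Hence $\lim^1_m$ vanishes and $H^*(BU^+(\LL),\FF)=\lim_m H^*(X_m,\FF)$, which is the first asserted equality.

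Next I would compute $H^*(X_m,\FF)=H^*(B\cl_{\cat{W}_m}\LL^{l(w)},\FF)$ by induction over finite subtrees $T\subseteq\cat{W}$ containing $e$. If $T=T'\cup\{w\}$ with $w$ a leaf at parent $w'$, then $B\cl_T\LL^{l(w)}$ is the homotopy pushout of $B\cl_{T'}\LL^{l(w)}\leftarrow BU_{w'}\to BU_w$, and both restriction maps to $H^*(BU_{w'},\FF)$ are split surjective: for $BU_w$ as above, and for $B\cl_{T'}$ because $U_{w'}$ is a retract of $\cl_{T'}\LL^{l(w)}$ (project the graph product onto the clique formed by $w'$ and its ancestors). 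So the Mayer--Vietoris boundary is zero and $H^*(B\cl_T\LL^{l(w)},\FF)$ is the fibre product of $\FF$-algebras $H^*(B\cl_{T'}\LL^{l(w)},\FF)\times_{H^*(BU_{w'},\FF)}H^*(BU_w,\FF)$. An $\FF$-basis of $\bigotimes_{v\in T\setminus\{e\}}H^*(\LL,\FF)$ modulo the products of positive-degree classes at incomparable nodes is given by products of positive-degree classes over node-sets that are chains of $T$, and each such chain either lies in $T'$ or passes through $w$; this matches the fibre product monomial by monomial and identifies it with $\bigotimes_{v\in T\setminus\{e\}}H^*(\LL,\FF)/I_T$, compatibly with the restrictions $H^*(B\cl_T)\to H^*(B\cl_{T_0})$ for $T_0\subseteq T$ (the augmentations killing the factors in $T\setminus T_0$). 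Taking $T=\cat{W}_m$ computes $H^*(X_m,\FF)$.

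Passing to the limit is then formal: a degree-$n$ basis monomial is supported on a chain of at most $n$ nodes, hence lies in $\cat{W}_n$, so the tower stabilizes in each degree and $\lim_m H^*(X_m,\FF)=\bigotimes_{v\in\cat{W}\setminus\{e\}}H^*(\LL,\FF)/I$ as graded rings, with $I$ generated by products of positive-degree classes at disconnected pairs of nonidentity nodes. I expect the main obstacle to be the inductive step: one must verify that the degenerating Mayer--Vietoris sequences give genuine \emph{ring} isomorphisms onto the monomial quotients $\bigotimes H^*(\LL,\FF)/I_T$ (not merely degreewise vector-space isomorphisms) and that these are natural in $T$; the $\lim^1$ vanishing and the colimit passage are routine once that is in hand.
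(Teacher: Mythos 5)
Your route is the paper's: you start from the telescope presentation (\ref{treeexpression}) supplied by Theorem \ref{hocoUnewthmslick} and (\ref{Nfilteredcolimit}), identify each finite stage $X_m\simeq B(\cl_{\cat{W}_m}\LL^{l(w)})$ as an iterated pushout along the tree, compute it by Mayer--Vietoris using the splittings of Lemma \ref{projlemma}, and kill $\lim^1$ by Mittag--Leffler; this is exactly the ``iterated Mayer--Vietoris'' process the paper invokes. Your leaf-by-leaf identification of $H^{\ast}(X_m,\FF)$ with the chain-monomial quotient, including the observation that both restrictions to $H^{\ast}(BU_{w'},\FF)$ are split (retract the graph product onto the chain through $w'$), correctly delivers the ring statement at each finite stage, and your surjectivity argument for the tower is sound.

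The step that fails is the passage to the limit: ``a degree-$n$ basis monomial is supported on a chain of at most $n$ nodes, hence lies in $\cat{W}_n$, so the tower stabilizes in each degree.'' The support of a degree-$n$ monomial indeed has at most $n$ nodes, but a single node can have arbitrarily large length, so new degree-$n$ chain-monomials appear at every stage and the tower $\{H^n(X_m,\FF)\}_m$ never stabilizes once $H^{>0}(\LL,\FF)\neq 0$. Already in degree $1$ with $\LL=\FF=\Fp$ one has $H^1(X_m,\Fp)=\prod_{\cat{W}_m\setminus\{e\}}\Fp$ with projections as transition maps, so $\lim_m H^1(X_m,\Fp)=\prod_{\cat{W}\setminus\{e\}}\Fp$ (equivalently, $H^1(BU^+(\Fp),\Fp)=\Hom((U^+)^{\mathrm{ab}},\Fp)$ with $(U^+)^{\mathrm{ab}}\cong\bigoplus_{\cat{W}\setminus\{e\}}\Fp$): a product, not a direct sum. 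Hence the inverse limit is the \emph{degreewise product} over chain-supported monomials, i.e., a completed version of $\otimes_{\cat{W}-\{e\}}H^{\ast}(\LL,\FF)/I$, and it coincides with the finitely supported tensor product only in the trivial case. The paper's own formulation carries the same looseness (its caveat that ``only finite tensor words are possible in each gradation'' constrains individual words, not the possibly infinite sums of them), so you land on the intended answer; but the stabilization claim is not the reason the last equality holds, and it should be replaced by the explicit description of $\lim_m$ as the degreewise product of chain-monomials.
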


For the rank 3 example discussed above (\ref{gcmrank3}),  $H^{\ast}(BU^+(\FF_2), \FF_2)$ is the $\FF_2$--algebra generated by the nodes (except the root) of an infinitely extended Figure \ref{pic:treeweyl} with relations $x y=0$ for  generators $x$ and $y$
whose nodes are not connected.

All the examples described here concern Kac-Moody groups with Weyl groups whose Hasse diagrams are trees so that (\ref{Nfilteredcolimit}--\ref{amal}) is available to clarify  inductive arguments
  and diagrams over $\cat{W}_m$ can be seen as iterated pushouts.
Because the cohomology spectral sequence calculation is trivialized for limits satisfying the Mittag--Leffler condition, we pose the following question.

\begin{quest} Let $K(\Fpk)$ be a discrete minimal Kac-Moody group over $\Fpk$ with unipotent $p$--subgroups $U_w(\Fpk)$ defined by (\ref{uwdef}). When does the diagram of group cohomologies
$D_K: \cat{W} \rightarrow \cat{Groups} \rightarrow \cat{Graded} ~\cat{Abelian} ~\cat{Groups}$ given by
$w \mapsto  U_w(\Fpk) \mapsto H^{\ast}(U_w(\Fpk), \Fp)$ satisfy the Mittag--Leffler condition?
\end{quest}

 In the tree shaped cases not covered here, a positive answer to this question would permit cohomology calculations as outlined in this section.
 Obviously, Lemma \ref{projlemma} greatly simplifies our calculations.  For example, for an infinite dimensional Kac-Moody group with generalized Cartan matrix
 \beq
\left[\begin{array}{cc} 2 & -a \\ -1 & 2\end{array}\right]
\nonumber
\label{gcm3}
\eeq \nod for $a \ge 4$, it is a priori possible that the 3--dimensional unipotent group $U_{sts}$ is a Heisenberg group (see \cite{AR} for details on $\Phi^{+}$ in this case).

\begin{bibdiv}
\begin{biblist}

\bib{Buildings}{book}{
   author={Abramenko, Peter},
   author={Brown, Kenneth S.},
   title={Buildings},
   series={Graduate Texts in Mathematics},
   volume={248},
   note={Theory and applications},
   publisher={Springer},
   place={New York},
   date={2008},
}

\bib{pcompact}{article}{
   author={Aguad{\'e}, Jaume},
   title={$p$-compact groups as subgroups of maximal rank of Kac-Moody
   groups},
   journal={J. Math. Kyoto Univ.},
   volume={49},
   date={2009},
   number={1},
   pages={83--112},
}

\bib{rank2mv}{article}{
   author={Aguad{\'e}, Jaume},
   author={Broto, Carles},
   author={Kitchloo, Nitu},
   author={Saumell, Laia},
   title={Cohomology of classifying spaces of central quotients of rank two
   Kac-Moody groups},
   journal={J. Math. Kyoto Univ.},
   volume={45},
   date={2005},
   number={3},
   pages={449--488},
}

\bib{BKtoBK}{article}{
   author={Aguad{\'e}, Jaume},
   author={Ruiz, Albert},
   title={Maps between classifying spaces of Kac-Moody groups},
   journal={Adv. Math.},
   volume={178},
   date={2003},
   number={1},
   pages={66--98},
}

\bib{AR}{article}{
   author={Aguad{\'e}, Jaume},
   author={Ruiz, Albert},
   title={Cohomology of Kac-Moody groups over a finite field},
   journal={Preprint},
   eprint={arXiv:1206.1352v3 [math.AT]}
   date={2012},
   pages={pp. 24},
}

\bib{BK}{book}{
   author={Bousfield, A. K.},
   author={Kan, D. M.},
   title={Homotopy limits, completions and localizations},
   series={Lecture Notes in Mathematics, Vol. 304},
   publisher={Springer-Verlag},
   place={Berlin},
   date={1972},
   pages={v+348},
}

\bib{classKM}{article}{
   author={Broto, Carles},
   author={Kitchloo, Nitu},
   title={Classifying spaces of Kac-Moody groups},
   journal={Math. Z.},
   volume={240},
   date={2002},
   number={3},
   pages={621--649},
}

\bib{BrMø}{article}{
   author={Broto, Carles},
   author={M{\o}ller, Jesper M.},
   title={Chevalley $p$--local finite groups},
   journal={Algebr. Geom. Topol.},
   volume={7},
   date={2007},
   pages={1809--1919},
   }

   \bib{Brown}{book}{
   author={Brown, Kenneth S.},
   title={Cohomology of groups},
   series={Graduate Texts in Mathematics},
   volume={87},
   publisher={Springer-Verlag},
   place={New York},
   date={1982},
   pages={x+306},
}

\bib{CR}{article}{
   author={Caprace, Pierre-Emmanuel},
   author={R{\'e}my, Bertrand},
   title={Groups with a root group datum},
   journal={Innov. Incidence Geom.},
   volume={9},
   date={2009},
   pages={5--77},
}

\bib{DC}{article}{
   author={Charney, Ruth},
   author={Davis, Michael W.},
   title={The $K(\pi,1)$-problem for hyperplane complements associated to
   infinite reflection groups},
   journal={J. Amer. Math. Soc.},
   volume={8},
   date={1995},
   number={3},
   pages={597--627},
}

\bib{Davis}{book}{
   author={Davis, Michael W.},
   title={The geometry and topology of Coxeter groups},
   series={London Mathematical Society Monographs Series},
   volume={32},
   publisher={Princeton University Press},
   place={Princeton, NJ},
   date={2008},
   pages={xvi+584},
}

\bib{fun}{article}{
   author={Dror Farjoun, Emmanuel},
   title={Fundamental group of homotopy colimits},
   journal={Adv. Math.},
   volume={182},
   date={2004},
   number={1},
   pages={1--27},
}

\bib{Fploc}{book}{
   author={Dror Farjoun, Emmanuel},
   title={Cellular spaces, null spaces and homotopy localization},
   series={Lecture Notes in Mathematics},
   volume={1622},
   publisher={Springer-Verlag},
   place={Berlin},
   date={1996},
   pages={xiv+199},
}

\bib{arth}{article}{
   author={Dror, E.},
   author={Dwyer, W. G.},
   author={Kan, D. M.},
   title={An arithmetic square for virtually nilpotent spaces},
   journal={Illinois J. Math.},
   volume={21},
   date={1977},
   number={2},
}

\bib{cech}{article}{
   author={Dugger, Daniel},
   author={Isaksen, Daniel C.},
   title={Topological hypercovers and $\Bbb A^1$-realizations},
   journal={Math. Z.},
   volume={246},
   date={2004},
   number={4},
   pages={667--689},
}

\bib{dk}{article}{
   author={Dwyer, W. G.},
   author={Kan, D. M.},
   title={A classification theorem for diagrams of simplicial sets},
   journal={Topology},
   volume={23},
   date={1984},
   number={2},
   pages={139--155},
}

\bib{basics}{article}{
   author={Dwyer, W. G.},
   author={Wilkerson, C. W.},
   title={The elementary geometric structure of compact Lie groups},
   journal={Bull. London Math. Soc.},
   volume={30},
   date={1998},
   number={4},
   pages={337--364},
}

\bib{propT}{article}{
   author={Ershov, Mikhail},
   title={Golod-Shafarevich groups with property $(T)$ and Kac-Moody groups},
   journal={Duke Math. J.},
   volume={145},
   date={2008},
   number={2},
   pages={309--339},
}

\bib{foleythesis}{article}{
   author={Foley, John D.},
   title={Comparing Kac-Moody groups over the complex numbers and fields of positive characteristic},
   journal={PhD Thesis},
   date={2012},
}

\bib{Friedbook}{book}{
   author={Friedlander, Eric M.},
   title={\'Etale homotopy of simplicial schemes},
   series={Annals of Mathematics Studies},
   volume={104},
   publisher={Princeton University Press},
   place={Princeton, N.J.},
   date={1982},
   pages={vii+190},
}

\bib{FriMil}{book}{
   author={Friedlander, Eric M.},
   title={The Friedlander-Milnor Conjecture},
   series={Annals of Mathematics Studies},
journal={L’Enseignement Mathematique},
   volume={2},
   date={2008},
   pages={90–-92},
}

\bib{Fbar}{article}{
   author={Friedlander, Eric M.},
   author={Mislin, Guido},
   title={Cohomology of classifying spaces of complex Lie groups and related
   discrete groups},
   journal={Comment. Math. Helv.},
   volume={59},
   date={1984},
   number={3},
   pages={347--361},
}

\bib{AG}{article}{
   author={Gonz{\'a}lez, Alex},
   title={Unstable Adams operations acting on $p$-local compact groups and
   fixed points},
   journal={Algebr. Geom. Topol.},
   volume={12},
   date={2012},
   number={2},
   pages={867--908},
}

\bib{Hal}{article}{
   author={Haefliger, Andr{\'e}},
   title={Complexes of groups and orbihedra},
   conference={
      title={Group theory from a geometrical viewpoint},
      address={Trieste},
      date={1990},
   },
   book={
      publisher={World Sci. Publ., River Edge, NJ},
   },
   date={1991},
   pages={504--540},
}

\bib{JG}{article}{
   author={Grodal, Jesper},
   title={The Classification of p-Compact Groups
and Homotopical Group Theory},
   conference={
      title={Proceedings of the International Congress of Mathematicians
},
   },
   book={
      volume={1},
      publisher={Hyderabad, India},
   },
   date={2010},
}

\bib{hv}{article}{
   author={Hollender, J.},
   author={Vogt, R. M.},
   title={Modules of topological spaces, applications to homotopy limits and
   $E_\infty$ structures},
   journal={Arch. Math. (Basel)},
   volume={59},
   date={1992},
   number={2},
   pages={115--129},
}

\bib{JMOconn}{article}{
   author={Jackowski, Stefan},
   author={McClure, James},
   author={Oliver, Bob},
   title={Self-homotopy equivalences of classifying spaces of compact
   connected Lie groups},
   journal={Fund. Math.},
   volume={147},
   date={1995},
   number={2},
   pages={99--126},
}

\bib{kacintegrate}{article}{
   author={Kac, Victor G.},
   title={Constructing groups associated to infinite-dimensional Lie
   algebras},
   conference={
      title={Infinite-dimensional groups with applications (Berkeley,
      Calif., 1984)},
   },
   book={
      series={Math. Sci. Res. Inst. Publ.},
      volume={4},
      publisher={Springer},
      place={New York},
   },
   date={1985},
   pages={167--216},
}

\bib{kptop}{article}{
   author={Kac, Victor G.},
   author={Peterson, Dale H.},
   title={Regular functions on certain infinite-dimensional groups},
   conference={
      title={Arithmetic and geometry, Vol. II},
   },
   book={
      series={Progr. Math.},
      volume={36},
      publisher={Birkh\"auser Boston},
      place={Boston, MA},
   },
   date={1983},
   pages={141--166},
}

\bib{relations}{article}{
   author={Kac, V. G.},
   author={Peterson, D. H.},
   title={Defining relations of certain infinite-dimensional groups},
   note={The mathematical heritage of \'Elie Cartan (Lyon, 1984)},
   journal={Ast\'erisque},
   date={1985},
   number={Numero Hors Serie},
   pages={165--208},
}

\bib{twisted}{article}{
   author={Kishimoto, Daisuke},
   author={Kono, Akira},
   title={On the cohomology of free and twisted loop spaces},
   journal={J. Pure Appl. Algebra},
   volume={214},
   date={2010},
   number={5},
   pages={646--653},
}

\bib{nituthesis}{article}{
   author={Kitchloo, Nitu},
   title={Topology of Kac-Moody groups},
   journal={MIT Thesis},
   date={1998},
}

\bib{nitutkm}{article}{
   author={Kitchloo, Nitu},
   title={On the topology of Kac-Moody groups},
   journal={Manuscript},
   date={2008},
}

\bib{domkt}{article}{
   author={Kitchloo, Nitu},
   title={Dominant $K$-theory and integrable highest weight representations
   of Kac-Moody groups},
   journal={Adv. Math.},
   volume={221},
   date={2009},
   number={4},
   pages={1191--1226},
}

\bib{kumar}{book}{
   author={Kumar, Shrawan},
   title={Kac-Moody groups, their flag varieties and representation theory},
   series={Progress in Mathematics},
   volume={204},
   publisher={Birkh\"auser Boston Inc.},
   place={Boston, MA},
   date={2002},
   pages={xvi+606},
}

\bib{KK}{article}{
   author={Kostant, Bertram},
   author={Kumar, Shrawan},
   title={The nil Hecke ring and cohomology of $G/P$ for a Kac-Moody group
   $G$},
   journal={Adv. in Math.},
   volume={62},
   date={1986},
   number={3},
   pages={187--237},
}

\bib{users}{book}{
   author={McCleary, John},
   title={A user's guide to spectral sequences},
   series={Cambridge Studies in Advanced Mathematics},
   volume={58},
   edition={2},
   publisher={Cambridge University Press},
   place={Cambridge},
   date={2001},
   pages={xvi+561},
   }

   \bib{Milnor}{article}{
   author={Milnor, J.},
   title={On the homology of Lie groups made discrete},
   journal={Comment. Math. Helv.},
   volume={58},
   date={1983},
   number={1},
   pages={72--85},
}

   \bib{Morel}{article}{
   author={Morel, Fabien},
   title={On the Friedlander-Milnor conjecture for groups of small rank},
   conference={
      title={Current developments in mathematics, 2010},
   },
   book={
      publisher={Int. Press, Somerville, MA},
   },
   date={2011},
   pages={45--93},
}

   \bib{moussongthesis}{article}{
   author={Moussong, Gabor},
   title={Hyperbolic Coxeter Groups},
   journal={PhD Thesis},
   date={1998},
}

   \bib{Quillen}{article}{
   author={Quillen, Daniel},
   title={Higher algebraic $K$-theory. I},
   conference={
      title={Algebraic $K$-theory, I: Higher $K$-theories (Proc. Conf.,
      Battelle Memorial Inst., Seattle, Wash., 1972)},
   },
   book={
      publisher={Springer},
      place={Berlin},
   },
   date={1973},
   pages={85--147. Lecture Notes in Math., Vol. 341},
}

\bib{QKT}{article}{
   author={Quillen, Daniel},
   title={On the cohomology and $K$-theory of the general linear groups over
   a finite field},
   journal={Ann. of Math. (2)},
   volume={96},
   date={1972},
   pages={552--586},
}

	\bib{contract}{article}{
   author={Mitchell, Stephen A.},
   title={Quillen's theorem on buildings and the loops on a symmetric space},
   journal={Enseign. Math. (2)},
   volume={34},
   date={1988},
   number={1-2},
   pages={123--166},
}	

\bib{KMasdiscrete}{article}{
   author={R{\'e}my, Bertrand},
   title={Kac-Moody groups as discrete groups},
   conference={
      title={Essays in geometric group theory},
   },
   book={
      series={Ramanujan Math. Soc. Lect. Notes Ser.},
      volume={9},
      publisher={Ramanujan Math. Soc.},
      place={Mysore},
   },
   date={2009},
   pages={105--124},
}

\bib{lieclass}{article}{
   author={Jackowski, Stefan},
   author={McClure, James},
   author={Oliver, Bob},
   title={Homotopy classification of self-maps of $BG$ via $G$-actions. I},
   journal={Ann. of Math. (2)},
   volume={135},
   date={1992},
   number={1},
   pages={183--226},
}

\bib{thomason}{article}{
   author={Thomason, R. W.},
   title={Homotopy colimits in the category of small categories},
   journal={Math. Proc. Cambridge Philos. Soc.},
   volume={85},
   date={1979},
   number={1},
   pages={91--109},
}

\bib{Titscover}{article}{
   author={Tits, Jacques},
   title={Ensembles ordonn\'es, immeubles et sommes amalgam\'ees},
   language={French},
   journal={Bull. Soc. Math. Belg. S\'er. A},
   volume={38},
   date={1986},
   pages={367--387 (1987)},
}

\bib{TitsKM}{article}{
   author={Tits, Jacques},
   title={Uniqueness and presentation of Kac-Moody groups over fields},
   journal={J. Algebra},
   volume={105},
   date={1987},
   number={2},
   pages={542--573},
}

\bib{Wadams}{article}{
   author={Wilkerson, Clarence},
   title={Self-maps of classifying spaces},
   conference={
      title={},
      address={Sympos., Battelle Seattle Res. Center, Seattle, Wash.},
      date={1974},
   },
   book={
      publisher={Springer},
      place={Berlin},
   },
   date={1974},
   pages={150--157. Lecture Notes in Math., Vol. 418},
}

\end{biblist}
\end{bibdiv}

\end{document}